\documentclass[a4paper,11pt]{article}
\title{The harmonic transvector algebra\\ in two vector variables}
\author{Hendrik De Bie\thanks{Hendrik.DeBie@UGent.be}, David Eelbode\thanks{david.eelbode@uantwerpen.be}, Matthias Roels\thanks{matthias.roels@uantwerpen.be}}
%\address{University of Antwerp, Department of Mathematics, Middelheimlaan 2, 2020 Antwerp, Belgium}
% Ghent University, Department of Mathematical Analysis, Galglaan 2, Building S22, 9000 Gent, Belgium
%\email{Hendrik.DeBie@UGent.be,David.Eelbode@Uantwerpen.be, Matthias.Roels@Uantwerpen.be}
\date{}
%%%%%%%%%%%%%%%%%%%%%%%%%%%%%%%%%%%%%%%%%%%%%%%%%%%%%%%%
\usepackage[a4paper]{geometry}

\usepackage{graphicx}
\usepackage{epsfig}
\usepackage{subfigure}
\usepackage{float}
\restylefloat{figure}
\restylefloat{table}
\usepackage[font=small,format=plain,labelfont=bf,up,textfont=it,up]{caption}

\usepackage{enumitem}

\usepackage{amsmath,amsfonts,amsthm,amssymb,mathdots}
\usepackage{mathrsfs}
\usepackage{mathtools}
\usepackage{accents}
\usepackage{dsfont}

\usepackage{tikz}
\usetikzlibrary{arrows,matrix,positioning,shapes,fit,calc}

%in english 
\usepackage[english]{babel}

\providecommand{\keywords}[1]{\textbf{Keywords:} #1}
\providecommand{\msc}[1]{\textbf{MSC 2010:} #1}

%%%%%%%%%%%%%%%%%%%%%%%%%%%%%%%
%%%%%%%% Matthias' notations %%%%%%%%

 %notatie voor kets
 %notatie voor bra's
\newcommand{\inner}[1]{\langle #1\rangle} %notatie voor inproduct
\newcommand{\comm}[1]{\lbrack #1 \rbrack} %notatie van Lie haak
\newcommand{\set}[1]{\left\{ #1 \right\} } %notatie voor haakjes voor een verzameling
\newcommand{\norm}[1]{\left| #1 \right|} %notatie voor norm
\newcommand{\brac}[1]{\left( #1 \right)} %notatie voor haakjes

\newcommand{\mR}{\mathbb{R}} % the field of the reals
\newcommand{\mN}{\mathbb{N}} % the field of the natural numbers
\newcommand{\mC}{\mathbb{C}} % the field of complex number
\newcommand{\mZ}{\mathbb{Z}} % the field of integers

%%%%%%%%%%%%%%%%%%%%%%%%%%%%%%%%
%%%%%% Hypergeometric functions %%%%%%
\newmuskip\pFqskip
\pFqskip=6mu
\mathchardef\pFcomma=\mathcode`, % keep a copy of the comma

\newcommand*\pFq[5]{%
  \begingroup
  \begingroup\lccode`~=`,
    \lowercase{\endgroup\def~}{\pFcomma\mkern\pFqskip}%
  \mathcode`,=\string"8000
  {}_{#1}F_{#2}\biggl[\genfrac..{0pt}{}{#3}{#4};#5\biggr]%
  \endgroup
}

%%%%%%%%%%%%%%%%%%%%%%%%%%%%%%%%%%%%%%%%%%%%%%%%%%%%%%%%%%%%
%%%%%%%%%%%%%%%%% David's notations %%%%%%%%%%%%%%%%%%%%%%%%%%%%%

%%%% differential operators %%%%%%% 
 
\newcommand{\D}{\partial} % partial derivative/ Dirac operator
\newcommand{\mE}{\mathbb{E}} % Euler operator

%%%%%% Function spaces %%%%%%%%

\newcommand{\mcG}{\mathcal{G}}
\newcommand{\mcH}{\mathcal{H}}
\newcommand{\mcI}{\mathcal{I}}

\newcommand{\mcP}{\mathcal{P}}

\newcommand{\mcU}{\mathcal{U}}
\newcommand{\mcW}{\mathcal{W}}

%%%%% Representation spaces %%%%%

\newcommand{\mS}{\mathbb{S}}  % spinor space
\newcommand{\mV}{\mathbb{V}}

%%%%% Lie algebras %%%%%%%%%%%%%

\newcommand{\g}{\mathfrak{g}}

\newcommand{\p}{\mathfrak{p}}

\newcommand{\gok}{\mathfrak{k}}
\newcommand{\goh}{\mathfrak{h}}

\newcommand{\gl}{\mathfrak{gl}}
\newcommand{\so}{\mathfrak{so}}
\newcommand{\sym}{\mathfrak{sp}}
\newcommand{\spl}{\mathfrak{sl}}

\newcommand{\gs}{\mathfrak{s}}
\newcommand{\gt}{\mathfrak{t}}

\newcommand{\SO}{\operatorname{SO}}

%%%%% Other operators %%%%%%%%%%%

\newcommand{\Aut}{\operatorname{Aut}}
\newcommand{\End}{\operatorname{End}}
\newcommand{\Span}{\operatorname{Span}}

%%%%%%% Rest %%%%%%%%%%%%%%%%

\newcommand{\nuu}{\vert u \vert}
\newcommand{\nx}{\vert x\vert}

%%%%%%%%%%%%%%%%%%%%%%%%%%%%%%%%%%%%%%%%%%%%
%%%%%%%%%%%%% Theorem style %%%%%%%%%%%%%%%%%%%%

\newtheorem{definition}{Definition}[section]
\newtheorem{proposition}{Proposition}[section]
\newtheorem{theorem}{Theorem}[section]
\newtheorem{lemma}{Lemma}[section]
\newtheorem{remark}{Remark}[section]
\newtheorem{corollary}{Corollary}[section]

%%%%%%%%%%%%%%%%%%%%%%%%%%%%%%%%%%%%%%%%%%%%%%%%%%%%%%%%
%%%%%%%%%%%%% Beginning of the document's text %%%%%%%%%%%%%%%%%%
%%%%%%%%%%%%%%%%%%%%%%%%%%%%%%%%%%%%%%%%%%%%%%%%%%%%%%%%

\begin{document}
\maketitle

\begin{abstract}
\noindent
The decomposition of polynomials of one vector variable into irreducible modules for the orthogonal group is a crucial result in harmonic analysis which makes use of the Howe duality theorem and leads to the study of spherical harmonics. The aim of the present paper is to describe a decomposition of polynomials in two vector variables and to obtain projection operators on each of the irreducible components. To do so, a particular transvector algebra will be used as a new dual partner for the orthogonal group leading to a generalisation of the classical Howe duality. The results are subsequently used to obtain explicit projection operators and formulas for integration of polynomials over the associated Stiefel manifold.
\end{abstract}
\keywords{Harmonic analysis, Howe dual pairs, transvector algebras.} \\ \noindent
\msc{42B35, 17B10, 30G35}
\section{Introduction}

In 1917 Ernst Fischer proved the following result, see \cite{Fi}: {\em given the fixed homogeneous polynomial $q(x)$ on $\mR^m$ and an arbitrary polynomial $P_k(x)$ of degree $k$, one can always decompose the latter as 
\begin{equation*}
 P_k(x) = Q_k(x) + q(x)R(x)\ ,
\end{equation*}
where $Q_k(x)$ is a homogeneous polynomial of degree $k$ belonging to the kernel of the differential operator $q(D)$, obtained by replacing each variable $x_j$ in $q(x)$ by the partial differential operator $\partial_{x_j}$, and where $R(x)$ has the appropriate degree.} 
\\ \\ \noindent
Generalisations of this result appear in a variety of mathematical contexts, a few of which are listed here:  
\begin{enumerate}
\item[(i)] In the framework of harmonic analysis, the subject in which the present paper is to be situated, the Fischer decomposition (FD) appears as the main tool for decomposing the space $\mcP(\mR^m,\mC)$ of polynomials on $\mR^m$ with values in the field $\mC$ of scalars into null-solutions for the Laplace operator. Similarly, the FD is used to decompose the space $\mcP(\mR^m,\mC_m)$ or $\mcP(\mR^m,\mS)$ of Clifford algebra-valued or spinor-valued polynomials on $\mR^m$ into null-solutions for the Dirac operator (which factorizes the Laplace operator). This serves as the starting point for several problems in multivariate analysis which generalise classical problems from complex analysis, since the FD in these particular cases should be interpreted as the higher-dimensional analogue of the Taylor series for holomorphic functions \cite{DSS}.
\item[(ii)] If the operator $q(D)$ enjoys certain invariance properties (e.g. the Laplace operator being invariant under the orthogonal group), the FD  can also be seen as an explicit realisation for an abstract decomposition of certain vector spaces into irreducible components. This subject is closely related to the existence of a Howe dual pair (see \cite{GW, Ho}), which will play a crucial role in the present paper (see section 4). For a nice overview of the interplay between Howe dual pairs and FDs in Clifford analysis, we refer to \cite{BDSES}. 
\item[(iii)] More specifically, explicit realisations of the FD on spaces of polynomials have been established for various other groups. For the case of finite reflection groups $G < O(m)$, this is achieved using Dunkl operators, see \cite{DX} for the harmonic version, as well as \cite{DBGV, HDB3, Orsted} for various Dirac versions. The case of the symplectic Dirac operator is treated in \cite{HDB4}. In the context of so-called hermitian Clifford analysis, the dual pair $ \mathfrak{sl}(1|2)\times U(n)$ is investigated \cite{BDS, BDSES}. Finally, in the context of flat superspaces, the Howe duality $\mathfrak{sl}(2)\times\mathfrak{osp}(m|2n)$ was studied in \cite{KC, KH}, while the Howe duality $\mathfrak{osp}(1|2)\times\mathfrak{osp}(m|2n)$ and corresponding FD was studied in \cite{HDB1}.
\item[(iv)] In a series of papers \cite{AGV, AR, Sha}, the FD was used to develop fast exact algorithms for solving Dirichlet boundary problems with polynomial data on quadratic surfaces on $\mR^m$. 
\end{enumerate}
In order to explain the problem addressed in the present paper, we briefly sketch the classical (harmonic) Fischer decomposition for scalar-valued polynomials on $\mR^m$.  This can be obtained by choosing $q(x) =  \norm{x}^2=\sum_{j}x_j^2$, the squared norm of the vector variable $x \in \mR^m$, for which $q(D) = \Delta_x$ becomes the Laplace operator on $\mR^m$. The harmonic Fischer decomposition then states that arbitrary polynomials $P_k(x)$, homogeneous of degree $k \in \mN$ in $x \in \mR^m$, can be decomposed into polynomials $H_{k-2j}(x)$ of degree $(k - 2j)$, where $0 \leq 2j \leq k$:
\begin{equation}\label{example_Fischer}
P_k(x) = \sum_{j = 0}^{\lfloor \frac{k}{2} \rfloor} \norm{x}^{2j}H_{k-2j}(x)\ , 
\end{equation}
with $\Delta_xH_{k-2j}(x) = 0$. From the point of view of representation theory for the Lie group $\operatorname{SO}(m)$ this result corresponds to the decomposition of the space $\mcP_k(\mR^m,\mC)$ of $k$-homogeneous $\mC$-valued polynomials into the subspaces of harmonics, irreducible under the regular $\operatorname{SO}(m)$-action
\begin{equation*}
\operatorname{H} : \SO(m) \rightarrow \Aut\big(\mcP(\mR^m,\mC)\big) : g \mapsto \operatorname{H}(g)
\end{equation*}
with $\operatorname{H}(g)[P](x) := P(g^{-1}x)$, where $\SO(m)$ acts on $x \in \mR^m$ through ordinary matrix multiplication. More generally, it gives rise to the full decomposition
\begin{equation} \label{Classic_Fischer}
\mcP(\mR^m,\mC) = \bigoplus_{k=0}^{+\infty}\mcP_k(\mR^m,\mC) \cong \bigoplus_{j=0}^{+\infty}\mV_{j}^\infty\otimes \mcH_j(\mR^m,\mC) \ , 
\end{equation}
where $\mV_{j}^\infty$ denotes the irreducible highest weight Verma module with highest weight $\mu = -(j + \frac{m}{2})$ for the (harmonic) dual partner $\spl(2)$ given by
\begin{equation}\label{harm_sl}
\spl(2) = \operatorname{Alg}(X,Y,H) \cong \operatorname{Alg}\left(-\frac{1}{2}\Delta_x,\frac{1}{2}\nx^2,-\left(\mE_x + \frac{m}{2}\right)\right)\ ,
\end{equation}
with $\mE_x:=\sum_{j=1}^m x_j\D_{x_j}$ the Euler operator, and where $\mcH_j(\mR^m,\mC)$ stands for the space of $j$-homogeneous polynomial solutions for the Laplace equation. In practical problems, such as integration over the sphere (or unit ball) in $\mR^m$ using Pizzetti's formula \cite{HDB1, HDB2, Pizzetti}, it is often useful to have an {explicit} expression for these harmonic components in $x \in \mR^m$. In case of harmonic analysis in one vector variable, these can be found using the projection operators from lemma \ref{projection_classic}.
\\ \\ \noindent
The aim of this paper is to obtain a generalisation of (\ref{example_Fischer}) and (\ref{Classic_Fischer}) for homogeneous polynomials $P_{k,l}(x,u)$ in two vector variables $(x,u) \in \mR^{2m}$. Polynomials in two vector variables can also be decomposed into irreducible components, corresponding to the decomposition of the vector space $\mcP_{k,l}(\mR^{2m},\mR)$ into irreducible summands for the regular action of $\SO(m)$ by means of
\begin{equation*}
\operatorname{H}(g)[P](x,u) := P(g^{-1}x,g^{-1}u)\ . 
\end{equation*} 
This is described by the Howe dual pair $\SO(m) \times \sym(4)$, a classical result in representation theory for which we refer to e.g
\cite{G, Ho, Hrem} and theorem \ref{theorem_Howe}. However, despite the fact that this result is well understood from a theoretical 
point of view, it does not explain how to explicitly obtain the summands inside the decomposition. To achieve this, we will propose a 
new decomposition: first the classical Fischer decomposition is used in the variables  $x\in\mR^{m}$ and $u\in \mR^{m}$ 
separately to obtain polynomials which are harmonic in $x$ and $u$, i.e. belong	to $\ker \brac{\Delta_x, \Delta_u}$. 
This reduces the problem to decomposing such polynomials into irreducible summands for $\SO(m)$. 
To do so, we introduce a so-called transvector algebra. This is a particular subalgebra of $\End\brac{\ker \brac{\Delta_x, \Delta_u}}$, which 
generalises the role played by the Lie algebra $\sym(4)$ as the dual partner in the classical Howe duality.  
Not only does this allow us to define an analogue for the Verma $\mV_k^\infty$ appearing in the aforementioned approach (see theorem \ref{Main-theorem}, the first main result of the paper), but also the quadratic relations satisfied by the generators can be employed to define explicit projection operators on the irreducible summands appearing in the decomposition (see theorems \ref{main-projection} and \ref{Projection_operator_2}). 
Moreover, in terms of these operators we can obtain a Pizzetti formula for the integration for polynomials over the Stiefel manifold, and they are central in our proof for the orthogonality of the summands appearing in the decomposition into irreducible subspaces with respect to the classical Fischer decomposition. 
The main advantage of our approach over the classical Howe duality approach lies in the fact that we reduce the analysis from the full space of polynomials to the subspace $\ker \brac{\Delta_x, \Delta_u}$. This is reflected in the projection operator lying at the core of our arguments (an operator which somehow generalises the role of the extremal projection operator for a classical Lie algebra, see \cite{Zh3}), which is considerably less complicated to handle (in the sense that it contains a double summation, rather than a fourfold summation). 
\\ \\ \noindent
The paper is organised as follows. After discussing the classical Howe duality on $\mR^{2m}$ in section \ref{section Howe}, we will introduce the harmonic transvector algebra in section \ref{transvector}. In section \ref{module}, we will define and study the module $\mV\comm{k,l}$ for this algebra, which is the analogue of $\mV_k^\infty$. We will also construct explicit projections on the irreducible summands. In section \ref{example}, an explicit example will be considered and in section \ref{orthogonality}, the orthogonality of the different irreducible summands with respect to the Fischer inner product will be proved. Finally, in the last section, we will prove a Pizzetti type formula for integration of polynomials over the Stiefel manifold invoking the projection operators obtained in section \ref{module}. The proof of theorem \ref{main-projection} will be gathered in the appendix. 

\section*{Conventions}
Throughout the paper, we work in dimensions greater than four, i.e. $m>4$, to ensure that the Lie algebra $\mathfrak{so}(m)$ is simple. Also, all Lie algebras appearing in the paper are over the field of complex numbers. 

\section*{Acknowledgement}
This research was supported by the Fund for Scientific Research-Flanders (FWO-V), project ``Construction of algebra realisations using Dirac-operators'', grant G.0116.13N. The authors would also like to thank Kevin Coulembier, Tim Janssens, Roy Oste and Joris Van der Jeugt for their useful suggestions which have led to a considerable improvement of the paper.

%%%%%%%%%%%%%%%%%%%%%%%%%%%%%%%%%%%%
%%%%%%%%%%%%%%%%%%%%%%%%%%%%%%%%%%%%
%%%%%%%%%%%%%%%%%%%%%%%%%%%%%%%%%%%%
%%%%%%%%%%%%%%%%%%%%%%%%%%%%%%%%%%%%
%%%%%%%%%%%%%%%%%%%%%%%%%%%%%%%%%%%%
%%%%%%%%%%%%%%%%%%%%%%%%%%%%%%%%%%%%
%%%%%%%%%%%%%%%%%%%%%%%%%%%%%%%%%%%%
%%%%%%%%%%%%%%%%%%%%%%%%%%%%%%%%%%%%

\section{Classical Howe duality on $\mR^{2m}$}
\label{section Howe}
In this section we will introduce the complete decomposition for the space $\mcP(\mR^{2m},\mC)$ of polynomials in two variables $(x,u)\in \mR^{2m}$, using the Howe dual pair $\sym(4)\times\textup{SO}(m)$. The notation $\sym(4) = C_2$ hereby refers to the symplectic Lie algebra of rank 2. As mentioned earlier, the vector space $\mcP(\mR^{2m},\mC)$ carries the structure of an SO$(m)$-module under the regular representation $H$. The Weyl algebra in two vector variables is given by: 
\begin{equation*}
\mcW(\mR^{2m},\mC) = \operatorname{Alg}\big(x_i,u_j;\partial_{x_p},\partial_{u_q}\big)_{1 \leq i, j, p, q \leq m}\,
\end{equation*}
and the action of the group SO$(m)$ on this space is for all $D \in \mcW(\mR^{2m},\mC)$ defined by means of $g \cdot D := \textup{H}(g) \circ D \circ \operatorname{H}(g^{-1})$. A well-known result from classical representation theory shows us that the following subspaces generate all SO$(m)$-invariant operators (see theorem 5.6.9 on page 285 of \cite{GW}): 
\begin{align*}
\p^+ &:=  \Span(\Delta_x,\inner{\D_u,\D_x}, \Delta_u)\\
\p^- &:= \Span(\nx^2,\inner{u,x}, \nuu^2)\\
\gok^{\phantom{-}} &:= \Span(\inner{x,\D_u}, \inner{u,\D_x}, \mE_x + \frac{m}{2}, \mE_u + \frac{m}{2})\ .
\end{align*}
The notation $\langle\cdot,\cdot\rangle$ hereby refers to the Euclidean inner product on $\mR^m$, and an operator such as for example $\inner{x,\D_u}$ then stands for $\sum_jx_j\partial_{u_j}$. It is easily verified that the sum $\g := \p^+ \oplus \gok \oplus \p^-$ defines a model for the Lie algebra $\sym(4)$, where $\gok$ realises the reductive Lie algebra $\gl(2)$. The Lie algebra $\g$ is then called the Howe dual partner for the action of the Lie group SO$(m)$ on polynomials in $(x,u) \in \mR^{2m}$. This is the subject of Howe's celebrated result, for which we refer e.g. to \cite{G,GW,Ho,Hrem}. We state the main conclusion here, which will be exploited in what follows:
\begin{theorem}\label{theorem_Howe}
Under the joint action of $\sym(4) \times \mathrm{SO}(m)$, the space $\mcP(\mR^{2m},\mC)$ decomposes as follows:
\begin{equation}\label{Duality}
\mcP(\mR^{2m},\mC) \cong \bigoplus_{k \geq l} \mV^\infty_{k,l} \otimes \mcH_{k,l}(\mR^{2m},\mC)\ . 
\end{equation}
The summation is hereby performed over all pairs of integers $(k,l) \in \mZ^+\times\mZ^+$ satisfying the dominant weight condition $k \geq l \geq 0$ and
the vector spaces at the right-hand side are defined as:
\begin{itemize}
\item[(i)] the infinite-dimensional (irreducible) $\sym(4)$-module
\begin{equation*}
\mV^\infty_{k,l} := \mcU(\g) \otimes_{\mcU(\gok\oplus \p^+)}\mV_{k,l},
\end{equation*}
which is induced from the (finite-dimensional) irreducible $\gl(2)$-module $\mV_{k,l}$, with highest weight given by $\mu = (k+\frac{m}{2},l+\frac{m}{2})$, extended to a $\gok\oplus \p^+$-module by letting $\p^+$ act trivially on $\mV_{k,l}$,
\item[(ii)] the space $\mcH_{k,l}$, with $k\geq l$, defined by means of
\begin{equation*}
\mcH_{k,l}(\mR^{2m},\mC) := \mcP_{k,l}(\mR^{2m},\mC) \cap \ker\big(\Delta_x,\Delta_u,\inner{\D_u,\D_x},\inner{x,\D_u}\big)\ ,
\end{equation*}
with $\ker(D_1,\ldots,D_k)=\ker D_1 \cap \ldots \cap \ker D_k$. 
\end{itemize}
\end{theorem}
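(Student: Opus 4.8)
The plan is to deduce the decomposition from the general theory of reductive dual pairs, after checking that $\big(\sym(4),\SO(m)\big)$ is such a pair on $\mcP(\mR^{2m},\mC)$. The first ingredient is purely invariant-theoretic: by the First Fundamental Theorem for the orthogonal group (and the observation that with only two vector variables available no $\epsilon$-tensor invariants arise, so that $O(m)$- and $\SO(m)$-invariants coincide here), every $\SO(m)$-invariant operator in $\mcW(\mR^{2m},\mC)$ is a noncommutative polynomial in the ten operators generating $\g=\p^+\oplus\gok\oplus\p^-$; together with the converse statement that $\SO(m)$ is the full centraliser of $\g$, this is precisely Theorem 5.6.9 of \cite{GW}. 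Granting this, the abstract Howe-duality theorem yields a multiplicity-free decomposition $\mcP(\mR^{2m},\mC)\cong\bigoplus_\lambda\mV_\lambda\otimes\mcH_\lambda$, summed over the $\SO(m)$-constituents $\mcH_\lambda$ that actually occur, with $\mV_\lambda$ the associated irreducible generalised Verma module for $\sym(4)$.

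It then remains to identify the index set and the two families of factors explicitly. On the $\SO(m)$-side, the joint kernel of $\p^+$ inside $\mcP_{k,l}(\mR^{2m},\mC)$ is the space of doubly harmonic polynomials $\ker(\Delta_x,\Delta_u,\inner{\D_u,\D_x})$; intersecting further with $\ker\inner{x,\D_u}$ extracts the $\gl(2)$-highest-weight vectors for the $\gok\cong\gl(2)$-action on the two vector variables. One then shows, e.g.\ by exhibiting an explicit highest-weight vector built from $\inner{\xi,x}^{k-l}$ times a power of the minor $\inner{\xi,x}\inner{\eta,u}-\inner{\xi,u}\inner{\eta,x}$, or by a Cauchy/branching argument, that $\mcH_{k,l}(\mR^{2m},\mC)$ is an irreducible $\SO(m)$-module of highest weight $(k,l,0,\dots,0)$, that it is nonzero precisely for $k\geq l\geq0$, and that distinct pairs give inequivalent modules; this settles part (ii).

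On the $\sym(4)$-side, each $\mcH_{k,l}$ is annihilated by $\p^+$ and, as a $\gl(2)$-module, is irreducible of highest weight $\mu=(k+\frac{m}{2},l+\frac{m}{2})$, the shift by $\frac{m}{2}$ arising from the constants in the Euler operators; hence $\gok\oplus\p^+$ acts on it through $\mV_{k,l}$ with $\p^+$ trivial. Applying $\mcU(\p^-)$ generates the whole $\SO(m)$-isotypic component, and a Poincaré--Birkhoff--Witt argument, or a comparison of formal characters using multiplicity one on the $\SO(m)$-side, shows that the action is free, so this component equals $\mV^\infty_{k,l}=\mcU(\g)\otimes_{\mcU(\gok\oplus\p^+)}\mV_{k,l}$, which is irreducible for the same reason. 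Summing over all $(k,l)$ with $k\geq l\geq0$ gives (\ref{Duality}).

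The step I expect to be the main obstacle is the identification in the second paragraph: proving that the doubly harmonic, $\inner{x,\D_u}$-null polynomials of bidegree $(k,l)$ constitute a single irreducible $\SO(m)$-summand with exactly the stated highest weight, and that these summands exhaust the decomposition with each occurring once. That is where the genuine representation theory sits; the rest is either classical invariant theory quoted from \cite{GW} or routine bookkeeping with parabolic induction.
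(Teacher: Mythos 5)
The paper does not actually prove this theorem: it is quoted as Howe's classical result, with the invariant-theoretic input (that the ten operators spanning $\g$ generate all $\SO(m)$-invariant operators in the Weyl algebra) cited as Theorem 5.6.9 of \cite{GW} and the duality itself referred to \cite{G,Ho,Hrem}, so there is no in-paper proof to compare against. Your sketch reconstructs precisely the standard argument contained in those references --- First Fundamental Theorem, abstract double-commutant/Howe machinery, identification of the joint $\p^+$-harmonics annihilated by $\inner{x,\D_u}$ as the irreducible $\SO(m)$-module $\mcH_{k,l}$ of highest weight $(k,l,0,\ldots,0)$, and of each isotypic component as the generalised Verma module $\mV^\infty_{k,l}$ --- so the strategy is the right one and consistent with what the paper relies on. Two points of precision. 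First, the comparison of $O(m)$- and $\SO(m)$-invariants takes place in the Weyl algebra, whose symbol algebra involves \emph{four} vector quantities $x,u,\D_x,\D_u$ rather than two; the absence of determinant ($\epsilon$-tensor) invariants therefore needs $m>4$, which is exactly the standing convention the paper imposes in its Conventions section, so your conclusion stands but for that reason rather than the one you give. Second, ``irreducible for the same reason'' is too quick for $\mV^\infty_{k,l}$: PBW freeness of the $\mcU(\p^-)$-action alone does not give irreducibility; it is either part of the conclusion of the abstract duality theorem (the isotypic component is irreducible under the commutant, which is generated by $\g$) or follows from the multiplicity-one character comparison you invoke, and the identification with the induced module then comes from the resulting surjection being an isomorphism. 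With those readings your outline is sound.
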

\noindent
Note that the vector space $\mcH_{k,l}(\mR^{2m},\mC)$, of so-called simplicial harmonics, defines a model for the irreducible $\SO(m)$-module with highest weight $(k,l,0,\ldots,0)$ for $k \geq l$, see e.g. \cite{CSV,GM}. This generalises a well-known classical result which says that the vector space $\mcH_k(\mR^m,\mC)$ of $k$-homogeneous harmonic polynomials defines a model for the irreducible representation with highest weight $(k,0,\cdots,0)$. Also note that the module $\mV^\infty_{k,l}$ is freely generated by $\mcU(\p^-)$ and as such, we have $\mV^\infty_{k,l}\cong\mcU(\p^-)\cdot\mV_{k,l}$. The isomorphism (\ref{Duality}) has to be interpreted in the following sense: for every pair of integers $(k,l)$, there is an injective $(\sym(4)\times \mathrm{SO}(m))$-intertwining map 
\begin{equation*}
\begin{split}
\mV^\infty_{k,l} \otimes \mcH_{k,l}(\mR^{2m},\mC) &\longrightarrow \mcP(\mR^{2m},\mC) \\
\Phi(Y_1,Y_2,Y_3)Y^j w_{k,l}\otimes H_{k,l}(x,u) &\mapsto \Phi(\nx^2,\inner{u,x}, \nuu^2)\inner{u,\D_x}^jH_{k,l}(x,u),
\end{split}
\end{equation*}
where $w_{k,l}$ is the highest weight vector for the module $\mV_{k,l}$ from above, $Y$ is the negative root vector of $\gl(2)$ and $\Phi(Y_1,Y_2,Y_3)$ is a polynomial in the variables $\set{Y_1,Y_2,Y_3}$ that span $\p^-$. This means that the representation $\mV_{k,l}$ can be realised explicitly as
\begin{equation*} 
\mV_{k,l} = \bigoplus_{j=0}^{k-l}\operatorname{Span}\Big(\inner{u,\D_x}^jH_{k,l}(x,u)\Big),
\end{equation*}
for an arbitrary but fixed $H_{k,l}(x,u)\in \mcH_{k,l}(\mR^{2m},\mC)$. As a consequence of Howe's abstract result, one can decompose arbitrary (homogeneous) polynomials in $(x,u)$ as follows: 
\begin{equation}\label{decomp1}
P_{k,l}(x,u) = \sum_{a,b,j,p,q} \rho_{a,b}(x,u)\inner{u,\D_x}^jH_{p,q}(x,u)\ , 
\end{equation}
with $\rho_{a,b}(x,u)\in \mcU(\p^-)$ and $\operatorname{deg}_x(\rho_{a,b}(x,u))=a$, $\operatorname{deg}_u(\rho_{a,b}(x,u))=b$. The summation is such that $(a + p - j,b + q + j) = (k,l)$. 
\\ \\ \noindent
However, this is not the approach we will pursue in this paper. Instead, as an alternative for the decomposition (\ref{decomp1}) above, we start from the following decomposition: 
\begin{equation}\label{decomp2}
P_{k,l}(x,u) = \sum_{i = 0}^\kappa\sum_{j=0}^\lambda \nx^{2i}\nuu^{2j}H'_{k-2i,l-2j}(x,u)\ ,
\end{equation}
where $\kappa = \lfloor\frac{k}{2}\rfloor$ and $\lambda = \lfloor\frac{l}{2}\rfloor$, and with 
\begin{equation*} 
H'_{k-2i,l-2j}(x,u) \in \mcP_{k-2i,l-2j}(\mR^{2m},\mC) \cap \ker\big(\Delta_x,\Delta_u\big)\ . 
\end{equation*}
It is crucial to note that the double harmonic vector space in the previous formula is \textit{not} irreducible under the action of the group SO$(m)$, which is why we have added an extra prime to the notation. It is then clear that as a module for SO$(m)$ we have: 
\begin{equation*}
 \mcP_{a,b}(\mR^{2m},\mC) \cap \ker\big(\Delta_x,\Delta_u\big) \cong (a,0,\cdots,0) \otimes (b,0,\cdots,0)\ . 
 \end{equation*}
The tensor product at the right-hand side is well-understood (note that from now on we will omit redundant zeroes in the highest weights): 
\begin{theorem}(Klymik, \cite{K})\label{Klymik}
For all positive integers $a \geq b \in \mZ^+$, one has the following decomposition: 
\begin{equation*}
(a) \otimes (b) \cong \bigoplus_{i=0}^b\bigoplus_{j=0}^{b-i}(a-i+j,b-i-j)\ . 
\end{equation*}
\end{theorem}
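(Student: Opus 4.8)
The plan is to identify the representations concretely and then appeal to a classical tensor product rule for the orthogonal group. Recall from the discussion following Theorem \ref{theorem_Howe} that, as $\SO(m)$-modules, $(c)\cong\mcH_c(\mR^m,\mC)$ is modelled by harmonics in one vector variable and $(p,q)\cong\mcH_{p,q}(\mR^{2m},\mC)$ (for $p\ge q$) by simplicial harmonics; these are precisely the irreducible $\mathrm{O}(m)$-modules labelled by the one-row partition $(c)$ and the two-row partition $(p,q)$. Since $m>4$, two-row $\mathrm{O}(m)$-modules restrict irreducibly to $\SO(m)$, so it is harmless to compute the tensor product decomposition at the level of $\mathrm{O}(m)$.

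For that I would invoke the Newell--Littlewood formula, which expresses, in the stable range, the decomposition of $[\lambda]\otimes[\mu]$ for $\mathrm{O}(m)$ with multiplicities $\sum_{\alpha,\beta,\gamma}c^\lambda_{\alpha\gamma}c^\mu_{\beta\gamma}c^\nu_{\alpha\beta}$ in terms of Littlewood--Richardson coefficients; since all partitions occurring here have at most two rows and $m>4$, no modification rules intervene. Specialising to $\lambda=(a)$, $\mu=(b)$ with $a\ge b$, the coefficient $c^{(a)}_{\alpha\gamma}$ forces $\alpha$ and $\gamma$ to be one-row, say $\alpha=(a-i)$ and $\gamma=(i)$; matching $\gamma$ against $\mu=(b)$ similarly forces $\beta=(b-i)$ with $0\le i\le b$; and then $c^\nu_{(a-i),(b-i)}$ is the Pieri coefficient for adding a horizontal strip of length $a-i$ to the one-row shape $(b-i)$, which equals $1$ exactly for $\nu=(a-i+j,\,b-i-j)$ with $0\le j\le b-i$ (the inequality $a-i+j\ge b-i-j$ being automatic from $a\ge b$) and $0$ otherwise. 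As these shapes are pairwise distinct for distinct $(i,j)$, the whole sum collapses without multiplicities to $\bigoplus_{i=0}^{b}\bigoplus_{j=0}^{b-i}(a-i+j,\,b-i-j)$, which is the claim. (Equivalently, one may unwind Newell--Littlewood into the Pieri rule over $\gl(m)$ for $S^aV\otimes S^bV$ followed by Littlewood's restriction rule $\gl(m)\downarrow\mathrm{O}(m)$, using $[\mcH_c]=[S^cV]-[S^{c-2}V]$ in the representation ring to pass from symmetric powers back to harmonics; this route is equivalent but involves a four-term inclusion--exclusion.)

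A third possibility, closer to the spirit of the present paper, is to realise $(a)\otimes(b)\cong\mcP_{a,b}(\mR^{2m},\mC)\cap\ker(\Delta_x,\Delta_u)$, apply Theorem \ref{theorem_Howe} in bidegree $(a,b)$ to write $\mcP_{a,b}(\mR^{2m},\mC)\cong\bigoplus_{k\ge l}(\mV^\infty_{k,l})_{(a,b)}\otimes\mcH_{k,l}(\mR^{2m},\mC)$, observe that $\Delta_x,\Delta_u\in\p^+\subset\sym(4)$ act only on the first tensor factor, and count the dimension of their joint kernel inside the weight space $(\mV^\infty_{k,l})_{(a,b)}$; since $\mV^\infty_{k,l}\cong\mcU(\p^-)\cdot\mV_{k,l}$ is freely generated, this reduces to counting monomials $\nx^{2\alpha}\inner{u,x}^\beta\nuu^{2\gamma}\inner{u,\D_x}^jH_{k,l}$ of the right bidegree with $0\le j\le k-l$, subject to the relations imposed by harmonicity. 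I expect the only genuine obstacle, in any of these routes, to be bookkeeping rather than anything conceptual: keeping the index ranges straight in the Littlewood--Richardson/Pieri step and checking that no modification rules are triggered, or, in the last route, handling the fact that $\Delta_x,\Delta_u$ do not act diagonally on $\mcU(\p^-)\cdot\mV_{k,l}$, so that even identifying their joint kernel already uses a simple instance of the commutation relations developed systematically in the later sections. Naturally, since this decomposition is classical (see \cite{K}), one may instead simply cite it.
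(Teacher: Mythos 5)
The paper does not actually prove this statement: it is quoted as a classical result and attributed to Klimyk \cite{K}, so any genuine argument you supply is by construction a different route. Your main route is correct. Since $m>4$, the one- and two-row labels lie in the stable range, so the Newell--Littlewood formula for $\mathrm{O}(m)$ applies with no modification rules, and the specialisation collapses exactly as you say: $c^{(a)}_{\alpha\gamma}\neq 0$ forces $\alpha=(a-i)$, $\gamma=(i)$; contracting $\gamma$ against $(b)$ forces $\beta=(b-i)$ with $0\le i\le b$; and the Pieri step yields precisely the shapes $(a-i+j,b-i-j)$ with $0\le j\le b-i$, each once (the relevant horizontal-strip constraints are $\nu_1\ge b-i\ge \nu_2$, both automatic here, not only $\nu_1\ge\nu_2$). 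Since $i$ is recovered from $|\nu|=a+b-2i$, distinct pairs $(i,j)$ give distinct summands, so the decomposition is multiplicity free, and because $2<m/2$ the two-row $\mathrm{O}(m)$-modules restrict irreducibly to $\SO(m)$, matching the paper's $\SO(m)$ labels. What this buys over the paper's bare citation is a self-contained verification, including the multiplicity-one property that the later sections implicitly use; your second (Pieri over $\gl(m)$ plus Littlewood restriction) route is an equivalent classical alternative. Your third route, counting the joint kernel of $\Delta_x,\Delta_u$ inside the $(a,b)$-weight space of $\mV^\infty_{k,l}$, is workable in principle but heavier than you suggest, for exactly the reason you flag (the non-diagonal action of $\p^+$ on $\mcU(\p^-)\cdot\mV_{k,l}$), so it is best left as the remark it is. Simply citing \cite{K}, as the paper does and as you note at the end, is of course also acceptable.
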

\noindent
This theorem thus says that the product in $\mcP(\mR^{2m},\mC)$ of two harmonic polynomials (resp. in $x$ and $u$) can be decomposed in terms of simplicial harmonic polynomials (see Theorem \ref{theorem_Howe}). It is then clear that the problem of decomposing arbitrary polynomials in two vector variables can be solved if one characterises the following operators: 
\begin{itemize}
\item[(i)] The projection operators 
\begin{equation*}
\pi_{i,j} : \mcP_{p,q}(\mR^{2m},\mC) \longrightarrow \mcP_{p-2i,q-2j}(\mR^{2m},\mC) \cap \ker\big(\Delta_x,\Delta_u\big)\ . 
\end{equation*}
As we will show in the next section, this can be done using the extremal projection operator for the Lie algebra $\spl(2)$. This is the analogue of the classical Fischer decomposition in only one vector variable (see the introduction). 
\item[(ii)] The projection operators
\begin{equation*}
\Pi_{i,j} : \mcP_{k,l}(\mR^{2m},\mC) \cap \ker\big(\Delta_x,\Delta_u\big) \longrightarrow \mcH_{k-i+j,l-i-j}(\mR^{2m},\mC)\ . 
\end{equation*}
This requires considerably more effort, and will be done in terms of a particular infinite-dimensional algebra which is known in the literature as a transvector algebra. 
\end{itemize}
\begin{remark}
Throughout this paper, the term `projection operator' is used somewhat loosely in the sense that the projection operators we will use do not always act on one and the same space and are as such only projection operators `up to isomorphism'.
\end{remark}

%%%%%%%%%%%%%%%%%%%%%%%%%%%%%%%%%%%%
%%%%%%%%%%%%%%%%%%%%%%%%%%%%%%%%%%%%
%%%%%%%%%%%%%%%%%%%%%%%%%%%%%%%%%%%%
%%%%%%%%%%%%%%%%%%%%%%%%%%%%%%%%%%%%

\section{The double harmonic transvector algebra}
\label{transvector}
In this section, we start with a particular decomposition for the algebra $\g$, which is closer to the spirit of the alternative decomposition (\ref{decomp2}): 
\begin{equation*}
\g=\sym(4) := \gs \oplus \gt = \big(\spl_x(2) \oplus \spl_u(2)\big) \oplus \gt \cong \so(4) \oplus \gt\ , 
\end{equation*}
with $\spl(2)$ the Lie algebra (\ref{harm_sl}) realised in harmonic analysis (in both variables $x$ and $u$, whence the extra subscript) and $\gt \subset \sym(4)$ the subspace defined as
\begin{equation*}
\gt := \mbox{Span}\big(\inner{u,x}, \inner{\D_u,\D_x}, \inner{u,\D_x}, \inner{x,\D_u}\big)\ . 
\end{equation*}
Note that $\gt \subset \g$ is not a subalgebra, but it clearly defines an $\gs$-module under the adjoint (commutator) action. Also note that there is no other algebra of two commuting copies of $\spl(2)$ with these properties. We then define the set
\begin{equation*}
\gs^+ := \left\{-\frac{1}{2}\Delta_x,-\frac{1}{2}\Delta_u\right\} \subset \gs\ , 
\end{equation*}
containing the positive root vectors inside the semi-simple Lie algebra $\gs$, and introduce a left-sided ideal $J' := \mcU'(\g)\gs^+$ in $\mcU'(\g)$. The prime in $\mcU'(\g)$ denotes the localisation of the universal enveloping algebra $\mcU(\g)$ with respect to the subset $\mcU(\goh)$, where $\goh$ stands for the abelian Cartan algebra in $\g$, explicitly given by 
\begin{equation*}
\goh := \Span(H_x,H_u) = \operatorname{Span}\left(-\left(\mE_x + \frac{m}{2}\right),-\left(\mE_u + \frac{m}{2}\right)\right)\ .
\end{equation*}
Defining the normaliser
\begin{equation*}
\operatorname{Norm}(J') := \big\{u \in \mcU'(\g) : J'u \subset J'\big\}\ , 
\end{equation*}
we finally arrive at the algebra we are interested in:
\begin{definition}
Recalling the notation $\g=\sym(4) = \gs \oplus \gt$ from above, we introduce the transvector algebra
\begin{equation*}
Z\big(\g,\gs\big) = \operatorname{Norm}(J')/J'\ ,
\end{equation*}
with $J' = \mcU'(\g)\gs^+$. 
\end{definition}
\noindent
This algebra was first introduced by Zhelobenko, see \cite{Zh}, in the framework of describing solutions for equations (so-called extremal systems, such as the Dirac or Maxwell equations) through the characterisation of the algebra of symmetries (the transvector algebra). Due to the works of Zhelobenko and Molev \cite{Molev, Zh, Zh2, Zh3}, we know that the algebra $Z\big(\g,\gs\big)$ is generated by the elements $\pi_\gs[u]$, with $u \in \gt$ and $\pi_\gs \in \mcU'(\gs)$ the extremal projection operator for $\gs$. Since the algebra $\gs$ consists of two commuting copies of $\spl(2)$, we will first define the extremal projection operator for $\spl(2)$:
\begin{definition}\label{pi}
The extremal projection operator $\pi$ for the Lie algebra $\spl(2)$ is defined as the unique formal operator, contained in an extension of $\mcU'\big(\spl(2)\big)$ to some algebra of formal series, which satisfies the conditions $X \pi = \pi Y = 0$ and $\pi^2 = \pi$. 
\end{definition}
\noindent
One can prove that this operator is explicitly given by
\begin{equation*}
\pi = 1+ \sum_{j = 1}^\infty\frac{(-1)^j}{j!}\frac{\Gamma(H + 2)}{\Gamma(H + j + 2)}Y^j X^j\ .
\end{equation*}
The localisation is needed to obtain a true projection operator ($\pi^2=\pi$), and is reflected in the occurrence of the operators $H \in \goh$ in the denominator. 
\begin{remark}
Note that we will often encounter expressions of the form $A/B$ in what follows, where in general $A$ and $B$ stand for non-commuting operators. Fractions like these will always stand for the product $B^{-1}A$, in this particular order. 
\end{remark}
\noindent
This extremal projection operator thus projects an arbitrary element $v_\lambda$ of an (irreducible, not necessarily finite-dimensional) representation $\mV_\lambda$ for $\spl(2)$ onto a multiple $\pi[v_\lambda]$ of the highest weight vector for $\mV_\lambda$. Note that this operator exists for all simple Lie algebras and superalgebras, see e.g. the overview paper \cite{T} and the references mentioned therein. In particular, we can use this extremal projection operator $\pi_x$ for the harmonic realisation (\ref{harm_sl}), in the sense that $\pi_x[P_k(x)] = H_k(x)$ immediately gives the harmonic component. The projection on any other component can also be written down in terms of the operator $\pi_x$: 
\begin{lemma}\label{projection_classic}
For all $s \in \mZ^+$, the projection operator
\begin{equation*}
\pi^{(s)} : \mcP(\mR^m,\mC) \rightarrow \nx^{2s}\mcH(\mR^m,\mC)\ ,
\end{equation*}
is explicitly given by
\begin{equation*}
P(x) \mapsto \pi^{(s)}[P(x)] := \frac{1}{4^ss!}\frac{\Gamma\left(\mE_x + \frac{m}{2} - 2s\right)}{\Gamma\left(\mE_x + \frac{m}{2} - s\right)}\nx^{2s}\pi_x\Delta_x^s P(x)\ .
\end{equation*}
Note that we hereby obviously have that $\pi^{(0)} = \pi_x$. 
\end{lemma}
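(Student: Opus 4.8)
The plan is to verify directly that the operator
$\pi^{(s)}[P(x)] = \frac{1}{4^s s!}\frac{\Gamma(\mE_x + \frac{m}{2} - 2s)}{\Gamma(\mE_x + \frac{m}{2} - s)}\nx^{2s}\pi_x\Delta_x^s P(x)$
does what is claimed: its image lies in $\nx^{2s}\mcH(\mR^m,\mC)$, and it acts as the identity on that subspace (so that, combined with the classical Fischer decomposition \eqref{example_Fischer}, it is the genuine projection onto the $s$-th summand). First I would observe that $\pi_x$ kills the $\spl(2)$-lowering direction, i.e. $\Delta_x \pi_x = 0$ in the harmonic realisation \eqref{harm_sl}, so for any polynomial $Q$ the element $\pi_x Q$ is harmonic; applying this with $Q = \Delta_x^s P$ shows $\pi_x\Delta_x^s P \in \mcH(\mR^m,\mC)$, and after multiplying by $\nx^{2s}$ and by the $\mE_x$-dependent scalar (which merely rescales on each homogeneity component and commutes with nothing dangerous) we land in $\nx^{2s}\mcH(\mR^m,\mC)$. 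This takes care of the first assertion.

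For the idempotency/normalisation, write $P = \sum_{j\ge 0}\nx^{2j}H_{k-2j}$ with $H_{k-2j}$ harmonic, using Fischer. I would compute $\Delta_x^s$ of a single term $\nx^{2j}H_{k-2j}$. The key computational input is the standard commutation identity in the harmonic $\spl(2)$, namely
$\Delta_x^s\big(\nx^{2j}H_n\big) = 4^s\,\frac{j!}{(j-s)!}\,\frac{\Gamma(n + \frac{m}{2} + j)}{\Gamma(n + \frac{m}{2} + j - s)}\,\nx^{2(j-s)}H_n$
for $j \ge s$ (and $0$ for $j < s$), which follows by induction from $\Delta_x(\nx^{2}H_n) = (4n + 2m)\nx^{?}\dots$ — more precisely from the $\spl(2)$ relation $[X,Y] = H$ applied to a lowest-weight-type vector. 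Since $\pi_x$ is the identity on harmonics and $\nx^{2(j-s)}H_n$ is generally not harmonic, I would then apply $\pi_x$ to the result. Here the cleanest route is to recognise $\nx^{2(j-s)}H_n$ as sitting inside the Verma-type $\spl(2)$-module generated by $H_n$ and use that $\pi_x$ projects onto its top; equivalently, combine the explicit series for $\pi_x$ with the same commutation identity. After $\pi_x$, only the $j = s$ term of $P$ survives modulo lower $\nx^{2}$-powers being projected away, leaving a scalar multiple of $H_{k-2s}$; multiplying back by $\nx^{2s}$ and by $\frac{1}{4^s s!}\frac{\Gamma(\mE_x + \frac{m}{2} - 2s)}{\Gamma(\mE_x + \frac{m}{2} - s)}$, evaluated on the homogeneity degree $k - 2s$ of $\Delta_x^s P$ after raising (so $\mE_x$ acts as $k$), all the Gamma factors telescope to $1$, giving exactly $\nx^{2s}H_{k-2s}$, which is the desired summand.

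The main obstacle is bookkeeping the $\mE_x$-shifts correctly: the scalar $\frac{\Gamma(\mE_x + \frac{m}{2} - 2s)}{\Gamma(\mE_x + \frac{m}{2} - s)}$ must be evaluated with $\mE_x$ acting on $\nx^{2s}\pi_x\Delta_x^s P$, whose relevant homogeneity is $k$ (the degree of the original $P_k$), not $k - 2s$; one has to track whether the operator ordering in the definition places the Gamma-quotient before or after the multiplication by $\nx^{2s}$, and use $\mE_x \nx^{2s} = \nx^{2s}(\mE_x + 2s)$ to reconcile the two. A secondary subtlety is justifying the manipulations of $\pi_x$ as a formal series: since $\pi_x$ is only defined on the localised enveloping algebra, one must check that all denominators $\Gamma(H_x + j + 2)$ appearing are invertible on the finite-dimensional homogeneous pieces in play (true because $m > 4$ keeps $H_x = -(\mE_x + \frac{m}{2})$ away from the offending integers), so that $\pi_x$ acts as a well-defined, finite operator on each $\mcP_k(\mR^m,\mC)$ and $\pi_x^2 = \pi_x$, $\Delta_x\pi_x = 0$ may be invoked legitimately. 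Once these two points are pinned down, assembling the computation over all $j$ and all homogeneity degrees yields the lemma.
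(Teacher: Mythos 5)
Your proposal is correct and follows essentially the same route as the paper: the paper's proof consists precisely of the key identity $\Delta_x^s\nx^{2s}H_k(x) = 4^ss!\,\frac{\Gamma\left(k+\frac{m}{2}+s\right)}{\Gamma\left(k+\frac{m}{2}\right)}H_k(x)$ (your commutation formula with $j=s$) together with rewriting the constant via the Euler operator, while you additionally spell out the Fischer decomposition, the vanishing of the other summands under $\pi_x\Delta_x^s$, and the $\mE_x$-bookkeeping. These added details are consistent with the paper's argument, so no changes are needed.
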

\begin{proof}
It suffices to note that 
\begin{equation*}
\Delta_x^s\nx^{2s}H_k(x) = 4^ss!\frac{\Gamma\left(k + \frac{m}{2} + s\right)}{\Gamma\left(k + \frac{m}{2}\right)}H_k(x)\ .
\end{equation*}
The numerical constant can be formulated in terms of the Euler operator, giving rise to the quotient of Gamma functions appearing at the right-hand side of the expression above. \end{proof}
\noindent
In view of the fact that the generators of the Lie algebras $\spl(2)_x$ and $\spl(2)_u$ commute, we then have that $\pi_\gs$ is given by the composition of the associated extremal projection operators for $\spl(2)$ from definition \ref{pi}, denoted by $\pi_x$ and $\pi_u$. Our operator $\pi_\gs$ is thus given by $\pi_\gs = \pi_x\pi_u = \pi_u\pi_x$. More explicitly, we have: 
\begin{equation*}
\pi_\gs   =  \left(\sum_{j=0}^\infty\frac{1}{4^jj!}\frac{\Gamma(H_x + 2)}{\Gamma(H_x + 2 + j)}\nx^{2j}\Delta_x^j \right)\left(\sum_{j=0}^\infty\frac{1}{4^jj!}\frac{\Gamma(H_u + 2)}{\Gamma(H_u + 2 + j)}\nuu^{2j}\Delta_u^j \right) ,
\end{equation*}
and this operator satisfies the relations $\pi_\gs^2 = \pi_\gs$ and
\begin{equation*}
\Delta_x\pi_\gs = \Delta_u\pi_\gs = 0 = \pi_\gs\nx^2 = \pi_\gs\nuu^2\ .
\end{equation*}
As mentioned before, the projection $\pi_{k,l}^{i,j}$ can be done using the extremal projection operator. This is indicated in the following theorem:
\begin{theorem}\label{Projection_operator_1}
The projection operator
\begin{equation*}
\pi_{i,j} : \mcP_{p,q}(\mR^{2m},\mC) \longrightarrow \mcP_{p-2i,q-2j}(\mR^{2m},\mC) \cap \ker\big(\Delta_x,\Delta_u\big)\ ,
\end{equation*}
is given by
\begin{equation*}
\pi_{i,j}\comm{P_{p,q}(x,u)}:=\frac{1}{4^{i+j} i!j!}\frac{\Gamma\left(\mE_x + \frac{m}{2} \right)}{\Gamma\left(\mE_x + \frac{m}{2}+i \right)}\frac{\Gamma\left(\mE_u + \frac{m}{2} \right)}{\Gamma\left(\mE_u + \frac{m}{2}+j\right)}\pi_{\gs}\Delta_x^i\Delta_u^j P_{p,q}(x,u)\ .
\end{equation*}
\end{theorem}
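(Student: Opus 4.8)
The plan is to verify the formula by evaluating $\pi_{i,j}$ on the building blocks of the decomposition (\ref{decomp2}). First I would recall why (\ref{decomp2}) is a genuine direct-sum decomposition whose summands really lie in $\ker\brac{\Delta_x,\Delta_u}$: applying the classical one-variable Fischer decomposition (\ref{example_Fischer}) in the variable $x$ writes $P_{p,q}$ as a sum of terms $\nx^{2a}H(x,u)$ with $\Delta_xH=0$, and applying (\ref{example_Fischer}) once more in $u$ splits each such $H$; since $\Delta_x$ commutes with $\nuu^2$ and with $\Delta_u$, applying $\Delta_x$ to the $u$-expansion of $H$ and invoking uniqueness of the $u$-decomposition forces every resulting piece to remain $x$-harmonic, hence double-harmonic, and uniqueness of (\ref{decomp2}) follows the same way. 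By linearity it then suffices to show that $\pi_{i,j}\comm{\nx^{2a}\nuu^{2b}H'(x,u)}$, for $H'\in\mcP_{p-2a,q-2b}(\mR^{2m},\mC)\cap\ker\brac{\Delta_x,\Delta_u}$, equals $H'$ when $(a,b)=(i,j)$ and vanishes otherwise.

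The computational engine is the iterated form of the identity already used in the proof of Lemma \ref{projection_classic} (equivalently, the standard $\spl(2)$-computation transported to the harmonic realisation): for a $k$-homogeneous $x$-harmonic polynomial $H_k$ and $i\le a$,
\begin{equation*}
\Delta_x^i\brac{\nx^{2a}H_k}=4^i\,\frac{a!}{(a-i)!}\,\frac{\Gamma\brac{k+\tfrac m2+a}}{\Gamma\brac{k+\tfrac m2+a-i}}\,\nx^{2(a-i)}H_k ,
\end{equation*}
together with $\Delta_x^i\brac{\nx^{2a}H_k}=0$ for $i>a$, and the analogous statements in $u$. Because $\Delta_x$ and $\Delta_u$ commute and $H'$ is annihilated by both, applying $\Delta_x^i\Delta_u^j$ to the block $\nx^{2a}\nuu^{2b}H'$ yields $0$ unless $i\le a$ and $j\le b$, and otherwise a nonzero scalar multiple of $\nx^{2(a-i)}\nuu^{2(b-j)}H'$.

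Next I would apply $\pi_\gs=\pi_x\pi_u$. Its explicit series shows that $\pi_\gs$ restricts to the identity on $\ker\brac{\Delta_x,\Delta_u}$ (each term of $\pi_x$ with index $\ge 1$ carries a factor $\Delta_x$), while by construction $\pi_\gs\nx^2=\pi_\gs\nuu^2=0$. Hence $\pi_\gs$ annihilates the output of the previous step unless $a-i=b-j=0$, that is $(a,b)=(i,j)$, in which case $\Delta_x^i\Delta_u^j\comm{\nx^{2i}\nuu^{2j}H'}$ is already a scalar multiple of the double-harmonic $H'$ and is fixed by $\pi_\gs$. Substituting $a=i$, $k=p-2i$ and $b=j$, $k=q-2j$ in the identity above,
\begin{equation*}
\pi_\gs\Delta_x^i\Delta_u^j\comm{\nx^{2i}\nuu^{2j}H'}=4^{i+j}\,i!\,j!\,\frac{\Gamma\brac{p-i+\tfrac m2}}{\Gamma\brac{p-2i+\tfrac m2}}\,\frac{\Gamma\brac{q-j+\tfrac m2}}{\Gamma\brac{q-2j+\tfrac m2}}\,H' .
\end{equation*}

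Finally, the Gamma-function prefactor $\tfrac{1}{4^{i+j}i!\,j!}\tfrac{\Gamma(\mE_x+\frac m2)}{\Gamma(\mE_x+\frac m2+i)}\tfrac{\Gamma(\mE_u+\frac m2)}{\Gamma(\mE_u+\frac m2+j)}$ is diagonal on bihomogeneous polynomials, so on the bidegree-$(p-2i,q-2j)$ output above it acts by the scalar obtained by setting $\mE_x=p-2i$ and $\mE_u=q-2j$; multiplying this scalar with the one displayed above, every power of $4$, every factorial and every Gamma-quotient cancels and the product is exactly $1$. This gives $\pi_{i,j}\comm{\nx^{2i}\nuu^{2j}H'}=H'$ while $\pi_{i,j}$ kills every other block, which is the assertion. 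Well-definedness is automatic since $\mE_x+\frac m2>2$ and $\mE_u+\frac m2>2$ on polynomials, so no denominator $\Gamma\brac{\mE_x+\frac m2+i}$ or $\Gamma\brac{\mE_u+\frac m2+j}$ meets a pole. The argument is essentially a bookkeeping verification; there is no single hard step, but the points demanding the most care are the compatibility of (\ref{decomp2}) with double-harmonicity (so that ``the'' projection is even meaningful) and the tracking of the Gamma-factors so that the normalising constant comes out to exactly $1$. The one-variable analogue of this calculation underlies Lemma \ref{projection_classic}.
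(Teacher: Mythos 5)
Your proposal is correct and coincides with the paper's (essentially implicit) argument: the paper states theorem \ref{Projection_operator_1} without a separate proof, as the immediate two-variable analogue of lemma \ref{projection_classic}, and your verification is exactly that computation carried out in detail --- the identity $\Delta_x^i\big(\nx^{2i}H_k\big)=4^i i!\,\tfrac{\Gamma(k+\frac m2+i)}{\Gamma(k+\frac m2)}H_k$ applied in both $x$ and $u$, combined with $\pi_\gs\nx^2=\pi_\gs\nuu^2=0$ and $\pi_\gs=\operatorname{id}$ on $\ker(\Delta_x,\Delta_u)$, plus the observation that the Gamma-prefactor, evaluated on the output bidegree $(p-2i,q-2j)$, makes the overall constant exactly $1$. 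Your preliminary justification of the double Fischer decomposition (\ref{decomp2}) is also sound and fills in a step the paper takes for granted.
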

\noindent
Next, we will label the algebraic generators for the transvector algebra $Z\big(\g,\gs\big)$ as follows: 
\begin{definition}\label{Generators}
The set $\mcG \subset \End(\ker \Delta_x \cap \ker \Delta_u)$, containing the generators for the algebra $Z\big(\g,\gs\big)$, is defined as: 
\begin{equation*} 
\mcG := \big\{S_x,S_u;A,C\big\} = \big\{\pi_\gs\inner{x,\D_u},\pi_\gs\inner{u,\D_x};\pi_\gs\inner{\D_u,\D_x},\pi_\gs\inner{u,x}\big\}\ . 
\end{equation*}
\end{definition}
\begin{remark}
The names of these generators are not chosen randomly: $S_x$ and $S_u$ both contain a so-called `skew' operator (combining a variable, indicated by the subscript, and a differential operator), and $A$ (respectively $C$) will be interpreted as an annihilation (respectively creation) operator, see below. 
\end{remark}
\noindent
When plugging in the explicit form for the operator $\pi_\gs$ into $\pi_\gs[u]$, with $u \in \gt$, the infinite series reduces to a finite sum since $\pi_\gs[u]$ must act on double harmonic polynomials (see definition \ref{Generators}). For the generators of $Z\big(\g,\gs\big)$, we then obtain the following expressions (recall that $A/B = B^{-1}A$): 
\begin{align*}
S_x & = \inner{x,\D_u} - \frac{|x|^2\inner{\D_u,\D_x}}{2\mE_x + m - 4}\\
S_u & =  \inner{u,\D_x} - \frac{|u|^2\inner{\D_u,\D_x}}{2\mE_u + m - 4}\\
A & =  \inner{\D_u,\D_x}\\
C & =  \inner{u,x} - \frac{|x|^2\inner{u,\D_x}}{2\mE_x + m - 4} - \frac{|u|^2\inner{x,\D_u}}{2\mE_u + m - 4} + \frac{|x|^2|u|^2\inner{\D_u,\D_x}}{(2\mE_x + m - 4)(2\mE_u + m - 4)}\ .
\end{align*}
Note that the `skew' operators only have two terms. This is due to the fact that for example {$\Delta_x^2\inner{x,\D_u} H(x,u)=0$} and $\Delta_u^2\inner{x,\D_u} H(x,u)=0$, where $H(x,u)\in\ker \Delta_x \cap \ker \Delta_u$. The annihilation operator $A$ has only one term since $\comm{\Delta_x,\inner{\D_u,\D_x}}=0=\comm{\Delta_u,\inner{\D_u,\D_x}}$. \\ \\ \noindent
According to a general result \cite[Theorem 2]{Zh}, the generators of $Z\big(\g,\gs\big)$ must satisfy a set of quadratic relations: denoting $z_j = \pi[u_j]$ with $u_j \in \gt$ (with $\g=\gs \oplus \gt$ a Lie algebra and $\pi$ the extremal projection operator associated to the reductive part $\gs$), one has that 
\begin{equation} \label{quadratic_relations}
[z_a,z_b] = \sum_{k,l}\alpha_{a,b}^{k,l}z_kz_l + \sum_{k}\beta_{a,b}^kz_k + \gamma_{a,b}\ , 
\end{equation}
where all `coefficients' belong to $\mcU'(\goh)$, with $\goh \subset \g$ a Cartan algebra. Let us then explicitly derive these relations in our present context. Before we do so, we first mention some basic identities which we need throughout this section:
\begin{lemma}
For all integers $a \in \mN$, we have: 
\begin{alignat*}{3}
&[\inner{\D_u,\D_x},\nx^{2a}] =2a\nx^{2a-2}\inner{x,\D_u} \qquad && [\inner{x,\D_u},\Delta_x^{a}] = -2a\Delta_x^{a-1}\inner{\D_u,\D_x}\\
&[\inner{\D_u,\D_x},\nuu^{2a}]  =2a\nuu^{2a-2}\inner{u,\D_x} \qquad && [\inner{u,\D_x},\Delta_u^{a}] = -2a\Delta_u^{a-1}\inner{\D_u,\D_x} .
\end{alignat*}
\end{lemma}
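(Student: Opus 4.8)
The plan is to reduce all four identities to the elementary first-order commutators in the Weyl algebra $\mcW(\mR^{2m},\mC)$ and then propagate them through the powers by a telescoping argument (equivalently, by induction on $a$). The two building blocks are $[\D_{x_j},\nx^2]=2x_j$ and $[x_j,\Delta_x]=-2\D_{x_j}$ (with the analogous statements in $u$), together with the obvious fact that every operator built from $x$ and $\D_x$ commutes with every operator built from $u$ and $\D_u$.

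For the first identity I would write $\inner{\D_u,\D_x}=\sum_j\D_{u_j}\D_{x_j}$ and note that $\D_{u_j}$ commutes with $\nx^{2a}$, so that $[\inner{\D_u,\D_x},\nx^{2a}]=\sum_j\D_{u_j}[\D_{x_j},\nx^{2a}]$. Since $\D_{x_j}$ acts as a derivation on the multiplication operator $\nx^{2a}$, one has $[\D_{x_j},\nx^{2a}]=\D_{x_j}(\nx^{2a})=2a\,\nx^{2a-2}x_j$; summing over $j$ and using that $x_j$ and $\nx^{2a-2}$ commute with $\D_{u_j}$ gives $2a\,\nx^{2a-2}\sum_j x_j\D_{u_j}=2a\,\nx^{2a-2}\inner{x,\D_u}$. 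The third identity is literally the same computation with $x$ and $u$ interchanged.

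For the second identity I would instead expand $\inner{x,\D_u}=\sum_j x_j\D_{u_j}$ and observe that now it is $\D_{u_j}$ that commutes with $\Delta_x^a$, so $[\inner{x,\D_u},\Delta_x^a]=\sum_j[x_j,\Delta_x^a]\D_{u_j}$. Here the telescoping enters: from $[x_j,\Delta_x]=-2\D_{x_j}$ and $[\D_{x_j},\Delta_x]=0$ one gets $[x_j,\Delta_x^a]=\sum_{k=0}^{a-1}\Delta_x^k[x_j,\Delta_x]\Delta_x^{a-1-k}=-2a\,\Delta_x^{a-1}\D_{x_j}$; substituting and summing over $j$ yields $-2a\,\Delta_x^{a-1}\sum_j\D_{x_j}\D_{u_j}=-2a\,\Delta_x^{a-1}\inner{\D_u,\D_x}$, and again the fourth identity is the mirror image under $x\leftrightarrow u$.

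There is no real obstacle here; the only thing to be careful about is operator ordering — specifically, keeping track of which factors may be pulled past a $\D_{u_j}$ (all $x$-operators) versus which may be pulled past a $\D_{x_j}$ (all $u$-operators and $\nx$-dependent coefficients) — and, in the second identity, making sure the $a$ terms produced by the telescoping sum all collapse to a single $\Delta_x^{a-1}$ because $\D_{x_j}$ is central in the subalgebra generated by $\Delta_x$. Alternatively the whole lemma admits a one-line induction on $a$ using $\nx^{2a}=\nx^2\nx^{2a-2}$ (resp. $\Delta_x^a=\Delta_x\Delta_x^{a-1}$) and the Leibniz rule for commutators $[P,QR]=[P,Q]R+Q[P,R]$, which is probably the cleanest way to write it up.
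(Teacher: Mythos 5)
Your proof is correct. The paper in fact states this lemma without proof, treating it as an elementary Weyl-algebra identity, and your argument — reducing to the first-order commutators $[\D_{x_j},\nx^2]=2x_j$, $[x_j,\Delta_x]=-2\D_{x_j}$ and propagating through the powers by telescoping (or, equivalently, induction with the Leibniz rule $[P,QR]=[P,Q]R+Q[P,R]$), while using that $x$-operators commute with $u$-operators — is exactly the standard verification the authors leave to the reader.
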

\noindent
We start with a collection of quadratic relations which can be interpreted as `scaled' commutation relations: 
\begin{lemma}\label{scaled_commutation}
The following relations hold in $Z\big(\g,\gs\big)$: 
\begin{align}
A S_x & =  \frac{H_x + 2}{H_x + 1}\:S_x A\label{A_Sx} \\
A S_u & =  \frac{H_u + 2}{H_u + 1}\:S_u A \label{A_Su}\\
S_uC & = \frac{H_x + 2}{H_x + 1}\:C S_u \label{C_Su}\\
S_xC & = \frac{H_u + 2}{H_u + 1}\:C S_x \label{C_Sx}\ .
\end{align}
\end{lemma}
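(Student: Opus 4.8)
\noindent
The plan is to verify all four identities \textup{(\ref{A_Sx})}--\textup{(\ref{C_Sx})} directly, as equalities of operators on the space $V:=\ker\brac{\Delta_x,\Delta_u}$ of double harmonics, by substituting the explicit expressions for $A,C,S_x,S_u$ obtained above and simplifying with the commutator identities of the lemma above. Before doing so I would record three reductions that trivialise most of the work. First, on $V$ the operator $\pi_\gs$ acts as the identity and $\mE_x,\mE_u$ --- hence also $H_x=-(\mE_x+\frac{m}{2})$ and $H_u=-(\mE_u+\frac{m}{2})$ --- are scalars on each bihomogeneous piece $V_{k,l}:=V\cap\mcP_{k,l}(\mR^{2m},\mC)$; moreover $\inner{\D_u,\D_x}$ commutes with $\Delta_x$ and $\Delta_u$, so $A$ acts as $\inner{\D_u,\D_x}$ on $V$, and the explicit forms of $S_x,S_u,C$ likewise preserve bihomogeneity, with bidegree shifts $(-1,-1),(+1,-1),(-1,+1),(+1,+1)$ respectively. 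Consequently, in a product of two generators the intermediate copy of $\pi_\gs$ may be dropped (the first factor already lands in $V$), and every operator-valued denominator $(2\mE_x+m-4)^{-1},(2\mE_u+m-4)^{-1}$ becomes an ordinary number once one records the bidegree on which it acts --- here the convention $A/B=B^{-1}A$ must be respected, and the standing assumption $m>4$ keeps all these numbers nonzero. Second, interchanging $x\leftrightarrow u$ is an automorphism of the whole construction which fixes $A$ and $C$ and swaps $S_x\leftrightarrow S_u$, $H_x\leftrightarrow H_u$, so \textup{(\ref{A_Su})} is the image of \textup{(\ref{A_Sx})} and \textup{(\ref{C_Sx})} the image of \textup{(\ref{C_Su})}; it is therefore enough to establish \textup{(\ref{A_Sx})} and \textup{(\ref{C_Su})}. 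Third, all the operators occurring in \textup{(\ref{A_Sx})} and \textup{(\ref{C_Su})} preserve the $x$-degree, so the scalar $\frac{H_x+2}{H_x+1}$ commutes with them and its placement is immaterial.

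\noindent
For \textup{(\ref{A_Sx})} I would fix $H\in V_{k,l}$ and use the elementary identity $\comm{\inner{\D_u,\D_x},\inner{x,\D_u}}=\Delta_u$, the vanishing $\Delta_uH=0$, and $\comm{\inner{\D_u,\D_x},\nx^2}=2\inner{x,\D_u}$ from the lemma to rewrite both $AS_xH=\inner{\D_u,\D_x}S_xH$ and $S_xAH=S_x\inner{\D_u,\D_x}H$ as linear combinations of the two operators $\inner{x,\D_u}\inner{\D_u,\D_x}H$ and $\nx^2\inner{\D_u,\D_x}^2H$. Reading $\mE_x$ off the relevant piece (so that $2\mE_x+m-4$ becomes a number) and comparing coefficients, the two sides turn out to differ exactly by the factor $\frac{2k+m-4}{2k+m-2}$, which is precisely the value taken by $\frac{H_x+2}{H_x+1}$ on $V_{k,l}$; this proves \textup{(\ref{A_Sx})}.

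\noindent
For \textup{(\ref{C_Su})} the same scheme applies: for $H\in V_{k,l}$ one expands $S_uCH$ and $CS_uH$ using the four-term form of $C$, the vanishings $\Delta_xH=\Delta_uH=0$, the identities of the lemma above (now also $\comm{\inner{\D_u,\D_x},\nuu^2}=2\inner{u,\D_x}$ and $\comm{\inner{u,\D_x},\Delta_u}=-2\inner{\D_u,\D_x}$), and the remaining standard Weyl-algebra commutators among the generators of $\gt$ (for instance $\comm{\inner{\D_u,\D_x},\inner{u,x}}=\mE_x+\mE_u+m$); after reducing every denominator to a number one matches the scalar coefficients of a common family of monomial operators applied to $H$, and the factor $\frac{H_x+2}{H_x+1}$ again drops out. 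A cleaner route, which I would actually prefer, is to invoke the Fischer inner product (for which $x_j^{\dagger}=\D_{x_j}$, $u_j^{\dagger}=\D_{u_j}$ and the extremal projections are self-adjoint): on $V$ one then has $A^{\dagger}=C$, $S_x^{\dagger}=S_u$ and $H_x^{\dagger}=H_x$, so \textup{(\ref{C_Su})} is literally the adjoint of \textup{(\ref{A_Sx})} (using once more that $CS_u$ commutes with $H_x$), and likewise \textup{(\ref{C_Sx})} is the adjoint of \textup{(\ref{A_Su})}.

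\noindent
The step I expect to be the main obstacle is \textup{(\ref{C_Su})} in its direct form: because $C$ carries four terms with mutually non-commuting scalar denominators, a careless expansion produces a swarm of intermediate terms, and the cancellations only fall into place if one stays disciplined about which bidegree each denominator acts on and about the ordering convention $A/B=B^{-1}A$. The adjointness argument is the clean way around this, at the modest cost of importing the elementary properties of the Fischer pairing.
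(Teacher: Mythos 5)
Your argument is correct, and for the two $A$-relations it is essentially the paper's own proof in a different notation: the paper establishes (\ref{A_Sx}) by expanding $\pi_x = 1 + \tfrac{1}{4(H_x+2)}\nx^2\Delta_x + \mathcal{O}(\Delta_x^2)$ inside $\pi_\gs\inner{\D_u,\D_x}\pi_\gs\inner{x,\D_u}$, which is exactly your computation with the explicit two-term form of $S_x$ and the denominators evaluated on the relevant bidegree; both produce the same correction term (your coefficient $-\tfrac{2}{2k+m-2}$ is the value of $\tfrac{1}{H_x+1}$ on the relevant piece). Where you genuinely diverge is in the $C$-relations. The paper dismisses them with ``similar or by symmetry''; as you correctly note, the $x\leftrightarrow u$ symmetry only pairs (\ref{A_Sx}) with (\ref{A_Su}) and (\ref{C_Su}) with (\ref{C_Sx}), so one $C$-relation still needs its own computation --- the paper leaves this implicit (and it is in fact no worse than the $A$-case if one expands the projector to first order as in the displayed proof, rather than multiplying out the four-term explicit $C$), whereas you replace it by the Fischer-adjointness argument $S_uC=(AS_x)^{\dagger}$ together with the observation that $CS_u$ preserves the $x$-degree. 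That shortcut is legitimate and clean: the ingredients ($\pi_\gs^{\dagger}=\pi_\gs$, $A^{\dagger}=C$, $S_x^{\dagger}=S_u$, self-adjointness of $H_x$) are elementary and are proved in section \ref{orthogonality} of the paper without any appeal to this lemma, so there is no circularity. One caveat on your preliminary remark: in a product such as $AS_x$ the only projector that may literally be dropped is the one attached to $A$, because its raw part $\inner{\D_u,\D_x}$ preserves $\ker\brac{\Delta_x,\Delta_u}$; the projection hidden inside $S_x$ (or $S_u$, $C$) cannot be discarded, since doing so would lose precisely the correction term that produces the factor $\tfrac{H_x+2}{H_x+1}$. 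Your actual computation keeps the full two-term form of $S_x$, so it is unaffected, but the parenthetical justification as worded is misleading.
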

\noindent
\begin{proof}
We will prove the first of these relations, the other relations are then proved in a similar way (or follow by symmetry). By definition, we have:
\begin{equation*}
\pi_\gs\inner{x,\D_u}\pi_\gs\inner{\D_u,\D_x}  =  \pi_\gs\inner{x,\D_u}\inner{\D_u,\D_x}\ .
\end{equation*}
On the other hand, in view of the fact that $\Delta_u$ commutes with $\inner{x,\D_u}$, we also have that 
\begin{align*}
\pi_\gs\inner{\D_u,\D_x}\pi_\gs\inner{x,\D_u} & =  \pi_\gs\inner{\D_u,\D_x}\pi_x\inner{x,\D_u}\\
& =  \pi_\gs\inner{\D_u,\D_x}\left(1 + \frac{1}{4(H_x + 2)}\nx^2\Delta_x + \mathcal{O}(\Delta_x^2)\right)\inner{x,\D_u}\\
& =  \pi_\gs\inner{\D_u,\D_x}\inner{x,\D_u} + \frac{1}{H_x + 1}\pi_\gs\inner{x,\D_u}\inner{\D_u,\D_x}\\
& =  \left(1 + \frac{1}{H_x + 1}\right)\pi_\gs\inner{x,\D_u}\inner{\D_u,\D_x}\ .
\end{align*}
This leads to the desired relation. 
\end{proof}
\begin{remark}
The relations derived in lemma \ref{scaled_commutation} are not of the form (\ref{quadratic_relations}), but turning them into such is a form is straightforward and not necessary for our purpose. 
\end{remark}
\begin{remark}
Note that the relations above can be rewritten as relations for commuting operators, but this requires rescaling them. The resulting operators are then said to belong to the Mickelsson algebra, see \cite{Mi, Molev}. For example, defining the rescaled operator 
\begin{equation*}
\widetilde{S}_x := (2\mE_x + m - 4)\inner{x,\D_u} - |x|^2\inner{\D_u,\D_x}\ ,
\end{equation*}
it is easily verified that the relation $[\widetilde{S}_x,A] = 0$ holds. Similar relations can then be obtained for the other identities from lemma \ref{scaled_commutation}. 
\end{remark}
\noindent
Combining the operators $S_x$ and $S_u$, we get the following result: 
\begin{lemma}\label{lemmaSx_Su}
The following commutation relation holds in $Z\big(\g,\gs\big)$: 
\begin{equation}\label{Sx_Su}
[S_x,S_u]  =  \frac{H_x - H_u}{(1 + H_x)(1 + H_u)}CA - (H_x - H_u)\ . 
\end{equation}
\end{lemma}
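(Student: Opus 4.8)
The plan is to compute the commutator $[S_x, S_u]$ directly from the explicit two-term expressions
\[
S_x = \inner{x,\D_u} - \frac{\nx^2\inner{\D_u,\D_x}}{2\mE_x + m - 4}, \qquad
S_u = \inner{u,\D_x} - \frac{\nuu^2\inner{\D_u,\D_x}}{2\mE_u + m - 4},
\]
while keeping track of the fact that these operators act on $\ker(\Delta_x,\Delta_u)$, so that any term with a factor $\Delta_x$ or $\Delta_u$ pushed to the right may be discarded. First I would expand $[S_x,S_u]$ into four pieces: $[\inner{x,\D_u},\inner{u,\D_x}]$, together with three cross-terms involving the correction fractions. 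The leading piece is the classical relation $[\inner{x,\D_u},\inner{u,\D_x}] = \mE_u - \mE_x = H_x - H_u$ (up to the shift by $m/2$, which cancels), which already produces the second term $-(H_x - H_u)$ once one accounts for the relative sign convention; the remaining work is to show that all the correction terms collapse to the single term $\tfrac{H_x - H_u}{(1+H_x)(1+H_u)} CA$.

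The key technical tools are the basic commutators from the lemma preceding the statement, namely $[\inner{\D_u,\D_x},\nx^{2a}] = 2a\nx^{2a-2}\inner{x,\D_u}$ and its three companions, together with the observation that $\inner{\D_u,\D_x}^2$ annihilates double-harmonic polynomials only after at most the relevant power, and that squares like $\Delta_x^2\inner{x,\D_u}$ vanish on $\ker(\Delta_x,\Delta_u)$ — the same facts that were used to truncate $\pi_\gs[u]$ to two terms. I would also need the commutation of the Euler-type denominators $2\mE_x + m - 4$ with the various skew and vector operators: $\inner{x,\D_u}$ raises $\deg_x$ by one, $\inner{u,\D_x}$ lowers it by one, $\inner{\D_u,\D_x}$ lowers both degrees by one, and $\nx^2$ raises $\deg_x$ by two, so each denominator simply gets shifted in a predictable way when moved past these operators. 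Collecting the surviving terms, I expect the combination
\[
-\frac{\nx^2\inner{u,\D_x}}{2\mE_x + m - 4} - \frac{\nuu^2\inner{x,\D_u}}{2\mE_u + m - 4} + \frac{\nx^2\nuu^2\inner{\D_u,\D_x}}{(2\mE_x+m-4)(2\mE_u+m-4)}
\]
to reassemble (after the $\inner{u,x}$ term cancels between $S_xS_u$ and $S_uS_x$, or rather the $\inner{x,\D_u}\inner{u,\D_x}$-type products generate it) into exactly the operator $C$ multiplied on the right by $A = \inner{\D_u,\D_x}$, with the scalar coefficient $\tfrac{H_x-H_u}{(1+H_x)(1+H_u)}$ emerging from the mismatch of the shifted denominators.

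The main obstacle will be the bookkeeping of the Euler-operator denominators: when one moves a factor such as $2\mE_x + m - 4$ through $\inner{\D_u,\D_x}$ or through $\nx^2$, the shift must be done consistently, and the final step requires recognising that the awkward difference of fractions $\tfrac{1}{2\mE_x+m-2} - \tfrac{1}{2\mE_x+m-4}$ (or similar) is what produces the factor $H_x - H_u$ in the numerator after putting everything over the common denominator $(1+H_x)(1+H_u)$ (recall $H_x = -(\mE_x + \tfrac m2)$, so $2\mE_x + m - 2 = -2(H_x+1)$). In other words, the hard part is not any single commutator but verifying that the several correction terms, each carrying a slightly different shifted denominator, telescope precisely into $\tfrac{H_x-H_u}{(1+H_x)(1+H_u)}$ times the explicit four-term expression for $C$, with $A$ on the right. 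An efficient alternative, which I would keep in reserve, is to use the scaled commutation relations of Lemma \ref{scaled_commutation} and the Mickelsson rescalings $\widetilde S_x, \widetilde S_u$ to first compute $[\widetilde S_x, \widetilde S_u]$, where the denominators disappear, and only divide by the Euler factors at the very end; this trades the messy fraction-chasing for a polynomial computation plus one controlled normalisation.
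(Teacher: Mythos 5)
Your strategy is sound and would reach the stated relation, but it is a noticeably heavier version of what the paper actually does, so a comparison is worthwhile. The paper never expands the localized realisations at all: it writes $S_xS_u=\pi_\gs\inner{x,\D_u}\,\pi_\gs\inner{u,\D_x}$, expands only the \emph{middle} projector and keeps just its first correction term (everything beyond dies on $\ker(\Delta_x,\Delta_u)$), and then recognises the surviving correction as $\frac{1}{H_u+1}\pi_\gs\inner{u,x}\inner{\D_u,\D_x}=\frac{1}{H_u+1}CA$ \emph{without} ever using the four-term form of $C$; the symmetric computation gives $S_uS_x$ with $\frac{1}{H_x+1}CA$, and subtracting immediately produces the coefficient $\frac{1}{H_u+1}-\frac{1}{H_x+1}=\frac{H_x-H_u}{(1+H_x)(1+H_u)}$ together with $\pi_\gs[\inner{x,\D_u},\inner{u,\D_x}]$. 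Your route through the explicit two-term operators does work: the $\inner{u,x}\inner{\D_u,\D_x}$ contributions come from $[\inner{x,\D_u},\nuu^{2}]=2\inner{u,x}$ and $[\inner{u,\D_x},\nx^{2}]=2\inner{u,x}$, and the resulting scalar $\tfrac{2}{2\mE_x+m-2}-\tfrac{2}{2\mE_u+m-2}$ is exactly $\tfrac{H_x-H_u}{(1+H_x)(1+H_u)}$ (it is the $x$-versus-$u$ denominators that get compared, not two shifts of the same one), but you pay for it with the full reassembly of the four-term $C$ times $A$ and all the denominator shifts, which the paper's trick of keeping the leading $\pi_\gs$ intact avoids entirely; your reserve option via the rescaled Mickelsson generators is likewise viable but still longer than the projector computation. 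One small but genuine correction: the elementary commutator is $[\inner{x,\D_u},\inner{u,\D_x}]=\mE_x-\mE_u$, not $\mE_u-\mE_x$, and since $H_x-H_u=\mE_u-\mE_x$ this is already equal to $-(H_x-H_u)$ on the nose -- no ``relative sign convention'' may be invoked to fix it afterwards, so get this sign right at the outset or the two terms of the lemma will come out inconsistent.
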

\begin{proof}
Using the fact that $\nx^2$ commutes with $\inner{x,\D_u}$, we get: 
\begin{align*}
S_xS_u & =  \pi_\gs\inner{x,\D_u}\pi_\gs\inner{u,\D_x}\\
& = \pi_\gs\inner{x,\D_u}\left(1 + \frac{1}{4(H_u + 2)}\nuu^2\Delta_u + \mathcal{O}(\Delta_u^2)\right)\inner{u,\D_x}\\
& = \pi_\gs\inner{x,\D_u}\inner{u,\D_x} + \frac{1}{H_u + 1}\:\pi_\gs\inner{u,x}\inner{\D_u,\D_x}\ .
\end{align*}
Swapping $x$ and $u$, we immediately obtain: 
\begin{equation*}
S_uS_x = \pi_\gs\inner{u,\D_x}\inner{x,\D_u} + \frac{1}{H_x + 1}\:\pi_\gs\inner{u,x}\inner{\D_u,\D_x}\ .
\end{equation*}
Together with the fact that $CA = \pi_\gs\inner{u,x}\pi_\gs\inner{\D_u,\D_x} = \pi_\gs\inner{u,x}\inner{\D_u,\D_x}$, this easily leads to the desired relation. 
\end{proof}
\noindent
Finally, combining the operators $C$ and $A$, we arrive at the complicated relation: 
\begin{lemma}\label{AC}
The following relation holds in $Z\big(\g,\gs\big)$: 
\begin{equation*}
AC  =  \frac{H_x + H_xH_u + H_u}{(H_x+ 1)(H_u + 1)}CA - (H_x + H_u) +  \frac{S_xS_u}{H_x+1} + \frac{S_uS_x}{H_u+1}\ .
\end{equation*}
\end{lemma}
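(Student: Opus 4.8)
The plan is to mimic the strategy used in Lemmas~\ref{scaled_commutation} and~\ref{lemmaSx_Su}: compute $AC$ and $CA$ by writing out $\pi_\gs\inner{\D_u,\D_x}\pi_\gs\inner{u,x}$ (and the reverse) and inserting just enough of the series expansions of $\pi_x$ and $\pi_u$ so that, after moving all the $\Delta_x$, $\Delta_u$ to the right where they annihilate the double harmonics, only finitely many terms survive. Concretely, I would first observe that $AC = \pi_\gs\inner{\D_u,\D_x}\,\pi_\gs\inner{u,x}$, and since $\inner{\D_u,\D_x}$ commutes with both $\Delta_x$ and $\Delta_u$, the left-most $\pi_\gs$ may be dropped against $\inner{\D_u,\D_x}$ in the sense that $\pi_\gs\inner{\D_u,\D_x} = \inner{\D_u,\D_x}\pi_\gs$ when acting on $\ker(\Delta_x,\Delta_u)$ after the right-hand $\pi_\gs$; more carefully, $AC = \pi_\gs \inner{\D_u,\D_x}\inner{u,x}$ using $A = \pi_\gs\inner{\D_u,\D_x}$ and $\pi_\gs^2=\pi_\gs$. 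Then I expand $\inner{u,x}$ through the partial harmonic projectors: writing $\pi_\gs\inner{u,x} = \pi_\gs\,\pi_x\,\pi_u\inner{u,x}$ and using the leading terms $\pi_x = 1 + \tfrac{1}{4(H_x+2)}\nx^2\Delta_x + \mathcal{O}(\Delta_x^2)$ and similarly for $\pi_u$, together with the basic commutators from the Lemma before Lemma~\ref{scaled_commutation} — namely $[\inner{\D_u,\D_x},\nx^2] = 2\inner{x,\D_u}$, $[\inner{\D_u,\D_x},\nuu^2] = 2\inner{u,\D_x}$, and $[\Delta_x,\inner{u,x}]$, $[\Delta_u,\inner{u,x}]$ — to push everything into a form where $\Delta_x,\Delta_u$ sit on the far right.

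The key bookkeeping is to recognise the surviving pieces as the generators already named. After the dust settles I expect to see: a term $\pi_\gs\inner{\D_u,\D_x}\inner{u,x}$ reordered via $[\inner{\D_u,\D_x},\inner{u,x}] = \mE_x + \mE_u + m = -(H_x+H_u) + \text{(const absorbed)}$ producing the $CA$ piece with coefficient $1$ plus the $-(H_x+H_u)$ term; a term of the form $\pi_\gs\inner{x,\D_u}\inner{u,\D_x}$ which, by the computation in the proof of Lemma~\ref{lemmaSx_Su}, equals $S_xS_u$ up to a $CA$ correction divided by $H_u+1$; and symmetrically a term $\pi_\gs\inner{u,\D_x}\inner{x,\D_u}$ giving $S_uS_x$ up to a $CA$ correction over $H_x+1$. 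Collecting all $CA$ contributions — the bare one, plus those coming from the skew cross-terms and from the $\tfrac{1}{4(H+2)}\nx^2\Delta_x$ correction terms acting through $[\inner{\D_u,\D_x},\nx^2]$ and $[\inner{\D_u,\D_x},\nuu^2]$ — should assemble into the rational coefficient $\frac{H_x + H_xH_u + H_u}{(H_x+1)(H_u+1)}$. The Gamma-function/Euler-operator identities are not needed here since only the first-order terms of $\pi_x,\pi_u$ contribute; higher-order terms vanish because e.g. $\Delta_x^2\inner{u,x}H = 0$ on double harmonics, exactly as noted after Definition~\ref{Generators}.

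The main obstacle will be the correct normal ordering of the denominators: the Cartan operators $H_x, H_u$ do not commute with $S_x, S_u, C$, so each time one slides an $H_x+2$ or $H_u+2$ past a raising- or skew-operator one picks up a shift, and one must be scrupulous (respecting the convention $A/B = B^{-1}A$) to land on exactly the stated fractions $\frac{S_xS_u}{H_x+1}$ and $\frac{S_uS_x}{H_u+1}$ rather than some shifted variant. A secondary subtlety is verifying that the two "double-correction'' contributions — where both $\pi_x$ and $\pi_u$ contribute a first-order term simultaneously — either vanish or combine cleanly into the $\frac{1}{(H_x+1)(H_u+1)}$ denominator of the $CA$ coefficient; tracking these requires care with $[\inner{\D_u,\D_x}, \nx^2\nuu^2]$-type commutators. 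Once the ordering is handled consistently, the identity follows by direct comparison of coefficients of $CA$, $S_xS_u$, $S_uS_x$, and the constant (Cartan) term.
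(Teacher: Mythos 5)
Your overall route is the paper's own: expand the extremal projectors inside the generators to first order (higher-order terms die on $\ker(\Delta_x,\Delta_u)$), push $\Delta_x,\Delta_u$ to the right with the basic commutators, and recognise the surviving pieces as $CA$, $S_xS_u$ and $S_uS_x$ via the identities established in the proof of lemma \ref{lemmaSx_Su}. Your bookkeeping is also the correct one: $\pi_\gs\inner{x,\D_u}\inner{u,\D_x}=S_xS_u-\tfrac{1}{H_u+1}CA$, $\pi_\gs\inner{u,\D_x}\inner{x,\D_u}=S_uS_x-\tfrac{1}{H_x+1}CA$, and the ``double-correction'' term is exactly what turns the bare coefficient $1$ of $CA$ into $1-\tfrac{1}{(H_x+1)(H_u+1)}=\tfrac{H_xH_u+H_x+H_u}{(H_x+1)(H_u+1)}$.

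There is, however, one intermediate equation that is false as written and, taken literally, would destroy the lemma: $AC=\pi_\gs\inner{\D_u,\D_x}\inner{u,x}$. You may drop the \emph{outer} projector (since $\inner{\D_u,\D_x}$ preserves $\ker(\Delta_x,\Delta_u)$, on double harmonics $AC=\inner{\D_u,\D_x}\,\pi_\gs\inner{u,x}$), but not the \emph{inner} one: $\inner{u,x}$ maps double harmonics out of $\ker(\Delta_x,\Delta_u)$, and $\pi_\gs\inner{\D_u,\D_x}(1-\pi_\gs)\neq 0$, e.g. $\pi_\gs\inner{\D_u,\D_x}\nx^2P=2\pi_\gs\inner{x,\D_u}P$. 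If your displayed identity were true, the commutator $[\inner{\D_u,\D_x},\inner{u,x}]=\mE_x+\mE_u+m=-(H_x+H_u)$ would give simply $AC=CA-(H_x+H_u)$, with no $S_xS_u$ or $S_uS_x$ terms, contradicting the statement; the fractions $\tfrac{S_xS_u}{H_x+1}$ and $\tfrac{S_uS_x}{H_u+1}$ are produced precisely by the projector you propose to discard. Since the rest of your sketch does keep and expand that projector (writing $\pi_\gs\inner{u,x}=\pi_x\pi_u\inner{u,x}$ and inserting the first-order terms of $\pi_x$ and $\pi_u$, as in the paper), this is a repairable slip rather than a wrong method, but the offending equation must be removed and replaced by the expansion of $\pi_x\pi_u\inner{u,x}$ itself.
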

\begin{proof}
By definition, we have: 
\begin{align*}
AC & =  \pi_\gs\inner{\D_u,\D_x}\left(1 + \frac{1}{4(H_x + 2)}\nx^2\Delta_x + \mathcal{O}(\Delta_x^2)\right)\pi_u\inner{u,x}\\
& =  \pi_\gs\inner{\D_u,\D_x}\pi_u\inner{u,x} + \frac{1}{H_x + 1}\pi_\gs\inner{x,\D_u}\pi_u\inner{u,\D_x}\ .
\end{align*}
The first term can be simplified as follows: 
\begin{align*}
\pi_\gs\inner{\D_u,\D_x}\pi_u\inner{u,x} & =  \pi_\gs\inner{\D_u,\D_x}\left(1 + \frac{1}{4(H_u + 2)}\nuu^2\Delta_u + \mathcal{O}(\Delta_u^2)\right)\inner{u,x}\\
& =  \pi_\gs\inner{\D_u,\D_x}\inner{u,x} + \frac{1}{H_u + 1}\:\pi_\gs\inner{u,\D_x}\inner{x,\D_u}\ .
\end{align*}
The second term from above can also be simplified, using the fact that
\begin{align*}
\pi_\gs\inner{x,\D_u}\pi_u\inner{u,\D_x}
% & = & \frac{1}{H_x + 1}\pi\inner{x,\D_u}\pi_u\inner{u,\D_x}\\
& =  \pi_\gs\inner{x,\D_u}\inner{u,\D_x} + \frac{1}{H_u+1}\pi_\gs\inner{u,x}\inner{\D_u,\D_x}\ .
\end{align*}
Using the expressions for $S_xS_u$ and $S_uS_x$ from the proof of lemma \ref{lemmaSx_Su}, we then arrive at the desired result.
\end{proof}
\noindent

%%%%%%%%%%%%%%%%%%%%%%%%%%%%%%%%%%%%%%%%%%%%%%%%%%%%
%%%%%%%%%%%%%%%%%%%%%%%%%%%%%%%%%%%%%%%%%%%%%%%%%%%%
%%%%%%%%%%%%%%%%%%%%%%%%%%%%%%%%%%%%%%%%%%%%%%%%%%%%
%%%%%%%%%%%%%%%%%%%%%%%%%%%%%%%%%%%%%%%%%%%%%%%%%%%%
%%%%%%%%%%%%%%%%%%%%%%%%%%%%%%%%%%%%%%%%%%%%%%%%%%%%

\section{Simplicial modules for the algebra $Z\big(\g,\gs\big)$} \label{module}
\subsection{Description of the module}
We will now prove that the action of the generators in $\mcG$ creates, for all pairs of integers $k \geq l \geq 0$, an infinite-dimensional module $\mV[k,l]$. For that purpose, we can consider the space $\mcH_{k,l}$ of simplicial harmonics as a highest weight vector, since it is killed under the action of the operators $A$ and $S_x$. Moreover, we also have that $\pi_\gs[\goh] = \goh$ and this Cartan algebra then reproduces the highest weight. 
\begin{lemma}
For each pair of integers $k \geq l \geq 0$, we have that $S_u^{k-l+1}\mcH_{k,l} = 0$, whereas the action of $S_u^j$ is non-trivial for all integers $j \leq k-l$. 
\end{lemma}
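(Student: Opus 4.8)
The plan is to think of $S_u = \pi_\gs \langle u, \partial_x\rangle$ acting on simplicial harmonics as the transvector-algebra replacement for the $\gl(2)$ lowering operator $\langle u, \partial_x\rangle$ appearing in Theorem~\ref{theorem_Howe}, and to transfer the vanishing statement there (namely that the irreducible $\gl(2)$-module $\mV_{k,l}$ generated from $H_{k,l}$ by $\langle u, \partial_x\rangle$ has dimension $k-l+1$) across the projection $\pi_\gs$. First I would recall that for $H_{k,l} \in \mcH_{k,l}$ one has $\langle u, \partial_x\rangle^{k-l+1} H_{k,l} = 0$ while $\langle u, \partial_x\rangle^j H_{k,l} \neq 0$ for $j \leq k-l$; this is exactly the explicit realisation $\mV_{k,l} = \bigoplus_{j=0}^{k-l} \Span\big(\langle u, \partial_x\rangle^j H_{k,l}\big)$ stated in the excerpt, and it rests on $\langle x, \partial_u\rangle$ and $\langle u, \partial_x\rangle$ together with $\mE_x - \mE_u$ forming a copy of $\spl(2)$ under whose action $H_{k,l}$ is a highest weight vector of weight $k-l$.

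The second step is to compare $S_u^j H_{k,l}$ with $\pi_\gs \langle u, \partial_x\rangle^j H_{k,l}$. The natural claim is that $S_u^j H_{k,l} = \pi_\gs \langle u, \partial_x\rangle^j H_{k,l}$ for every $j$; I would prove this by induction on $j$, using at each stage that $\pi_\gs$ is idempotent together with the fact that, modulo the ideal generated by $\Delta_x, \Delta_u$ on the right, $\langle u, \partial_x\rangle$ acting on a double-harmonic polynomial can be pushed through $\pi_\gs$ up to terms already killed — more precisely, that $\pi_\gs \langle u, \partial_x\rangle \pi_\gs = \pi_\gs \langle u, \partial_x\rangle \pi_u$ and then expanding $\pi_u$ as in the proofs of Lemmas~\ref{lemmaSx_Su} and \ref{AC}. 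An alternative, cleaner route is to use the scaled commutation relation \eqref{C_Su}, $S_u C = \tfrac{H_x+2}{H_x+1} C S_u$, together with $A S_u = \tfrac{H_u+2}{H_u+1} S_u A$ from \eqref{A_Su}: since $A H_{k,l} = 0$ and $S_x H_{k,l} = 0$, induction with these relations shows $A$ and $S_x$ act controllably on $S_u^j H_{k,l}$, and the weight count via $\goh$ pins down the homogeneity degrees $(k-j, l+j)$.

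For the vanishing at $j = k-l+1$: since $S_u^{k-l} H_{k,l}$ has bidegree $(l, k)$ in $(x,u)$ and lies in $\ker(\Delta_x,\Delta_u)$ but additionally — by the comparison above and Theorem~\ref{theorem_Howe} — in $\ker(\langle u, \partial_x\rangle, \langle x, \partial_u\rangle)$ up to lower-order harmonic-ideal terms, one more application of $\langle u, \partial_x\rangle$ lands in the span of things annihilated by $\pi_\gs$, hence $S_u^{k-l+1} H_{k,l} = \pi_\gs \langle u, \partial_x\rangle^{k-l+1} H_{k,l} = \pi_\gs[0] = 0$; more directly, $\langle u, \partial_x\rangle^{k-l+1} H_{k,l} = 0$ already in $\mcP(\mR^{2m},\mC)$ because $\langle u, \partial_x\rangle$ is a lowering operator for the $\spl(2)$ with lowest weight $-(k-l)$ on this module, so $S_u^{k-l+1} H_{k,l} = \pi_\gs[0]=0$ follows from the comparison identity with nothing further to check. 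For non-triviality of $S_u^j H_{k,l}$ with $j \le k-l$: it suffices to show $A^j S_u^j H_{k,l} \neq 0$, and iterating \eqref{A_Su} together with Lemma~\ref{AC} to evaluate $A S_u$ on a highest-weight-type vector produces a nonzero scalar (a product of the eigenvalues $\tfrac{H_u+2}{H_u+1}$ and of the quantities appearing in Lemma~\ref{AC}, all of which are nonzero in the relevant weight range because $m > 4$ and $k \geq l$), so $S_u^j H_{k,l} \neq 0$.

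\textbf{Main obstacle.} The delicate point is the comparison identity $S_u^j H_{k,l} = \pi_\gs \langle u, \partial_x\rangle^j H_{k,l}$: a priori $S_u^j$ is an iterated composition $(\pi_\gs \langle u, \partial_x\rangle)^j$ with a projection inserted at every stage, and one must check that these intermediate projections do no harm — i.e. that the ``error terms'' generated each time $\langle u, \partial_x\rangle$ moves a double-harmonic polynomial out of $\ker(\Delta_x,\Delta_u)$ are exactly the terms $\pi_\gs$ kills, with no surviving cross-contribution. Establishing this cleanly is where the explicit two-term form of $S_u$ (valid because $\Delta_x^2 \langle u,\partial_x\rangle$ and $\Delta_u^2 \langle u, \partial_x\rangle$ annihilate double harmonics) and the idempotency $\pi_\gs^2 = \pi_\gs$, $\Delta_x \pi_\gs = \Delta_u \pi_\gs = 0$ have to be used carefully; everything else is bookkeeping with weights and nonvanishing of Gamma-function ratios.
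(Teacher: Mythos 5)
The vanishing half of your argument is sound and close in spirit to the paper's: since $[\Delta_x,\inner{u,\D_x}]=0$ and $\pi_u$ kills $\nuu^2$-multiples, one indeed gets $S_u^j=\pi_u\inner{u,\D_x}^j=\pi_\gs\inner{u,\D_x}^j$ on double harmonics, and $\inner{u,\D_x}^{k-l+1}\mcH_{k,l}=0$ then gives $S_u^{k-l+1}\mcH_{k,l}=0$. The genuine gap is in the non-triviality half. You propose to certify $S_u^jH_{k,l}\neq 0$ by showing $A^jS_u^jH_{k,l}\neq 0$, but that quantity is identically zero: by relation (\ref{A_Su}) one has $AS_u=\frac{H_u+2}{H_u+1}S_uA$, and $A\mcH_{k,l}=0$ because $\mcH_{k,l}\subset\ker\inner{\D_u,\D_x}$, so $AS_u^j\mcH_{k,l}=0$ for every $j\geq 1$ (lemma \ref{AC} concerns $AC$, not $AS_u$, and does not help here). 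Your test is therefore vacuous and cannot detect non-vanishing. Note also that the comparison identity alone does not settle this point: $\inner{u,\D_x}^jH_{k,l}\neq 0$ does not by itself imply $\pi_u\inner{u,\D_x}^jH_{k,l}\neq 0$, since the image could a priori lie entirely in $\nuu^2\mcP(\mR^{2m},\mC)$; ruling this out is precisely what requires an argument.

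Two correct ways to close the gap. Either pair $S_u^j$ with $S_x^j$ instead of $A^j$: the identity $S_xS_u^j\big\vert_{\mcH_{k,l}}=-j(H_x-H_u-j+1)S_u^{j-1}\big\vert_{\mcH_{k,l}}$ from the proof of proposition \ref{Sx_action} uses only lemmas \ref{scaled_commutation} and \ref{lemmaSx_Su} together with $A\mcH_{k,l}=S_x\mcH_{k,l}=0$, hence is not circular, and iterating it gives $S_x^jS_u^jH_{k,l}=j!\,(k-l-j+1)^{(j)}H_{k,l}\neq 0$ for $j\leq k-l$, as in lemma \ref{S_xCS_u} with $i=0$. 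Or follow the paper's route: by $\mathrm{SO}(m)$-invariance of the generators it suffices to treat the explicit highest weight vector $H^{(h)}_{k,l}(x,u)=\overline{z_1^{\,k-l}(z_1w_2-z_2w_1)^l}$; one computes $\inner{u,\D_x}^{k-l}H^{(h)}_{k,l}(x,u)=(-1)^l(k-l)!\,H^{(h)}_{k,l}(u,x)$, which is a nonzero polynomial in null variables and hence harmonic in $u$, and then argues by contradiction: if some intermediate projection $S_u^jH^{(h)}_{k,l}$ vanished, the image $\inner{u,\D_x}^jH^{(h)}_{k,l}$ would lie in $\nuu^2\mcP(\mR^{2m},\mC)$, and since $[\inner{u,\D_x},\nuu^2]=0$ all subsequent images would too, contradicting the explicit nonzero $u$-harmonic value at $j=k-l$. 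Either repair makes your proof complete; as written, the non-triviality claim is unsupported.
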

\begin{proof}
We will make use of the fact that the generators for $Z\big(\g,\gs\big)$ are invariant under the action of SO$(m)$, as this allows us to consider the action of $S_{u}$ on the highest weight vector for $\mcH_{k,l}$ only. It was shown e.g. in \cite{CSV} that this particular element is given by
\begin{equation*}
H^{(h)}_{k,l}(x,u) := \overline{(z_1)^{k-l}(z_1w_2- z_2w_1)^l} \ ,
\end{equation*}
where the complex variables are defined as $z_j = x_{2j-1} + ix_{2j}$ and $w_k = u_{2k-1} + iu_{2k}$. It is now obvious that 
\begin{equation*}
 S_u^{k-l}H^{(h)}_{k,l}(x,u) =\inner{u,\D_x}^{k-l}H^{(h)}_{k,l}(x,u) = (-1)^{l}(k-l)!H^{(h)}_{k,l}(u,x)\ , 
\end{equation*}
which means that the operator $\inner{u,\D_x}^{k-l+1}$ is the smallest integer power which acts trivially on the vector space $\mcH_{k,l}$. We still need to prove that the projection $\pi_\gs$ on the space $\mcH'(\mR^{2m},\mC)=\mcP(\mR^{2m},\mC)\cap \ker\big(\Delta_x,\Delta_u\big)$ of double harmonics must be non-trivial for all smaller powers. First of all, as $[\Delta_x,\inner{u,\D_x}] = 0$ we have that $\pi_\gs\inner{u,\D_x} = \pi_u\inner{u,\D_x}$. Secondly, suppose that at some point the projection on the harmonics would become trivial. This would imply that the image under the operator $\inner{u,\D_x}^j$ belongs to the orthogonal complement $\nuu^2\mcH'(\mR^{2m},\mC)$ of the harmonics in $u$. However, as the operator $\inner{u,\D_x}$ commutes with this factor $\nuu^2$, this would mean that all consecutive images under the action of $\inner{u,\D_x}$ also belong to this orthogonal complement. In particular, we would then find that 
\begin{equation*}
\inner{u,\D_x}^{k-l}H^{(h)}_{k,l}(x,u) \in \nuu^2\mcH'(\mR^{2m},\mC)\ ,  
\end{equation*}
which is clearly a contradiction. 
\end{proof}
\begin{definition}\label{V_A[k,l]}
For all pairs of integers $k \geq l \geq 0$, the $Z\big(\g,\gs\big)\:$-module $\mV[k,l]$ can be realised by means of
\begin{equation*} 
\mV[k,l] := \bigoplus_{i=0}^{\infty}\bigoplus_{j=0}^{k-l}C^iS_u^jH_{k,l}(x,u)\ , 
\end{equation*}
for an arbitrary but fixed $H_{k,l}(x,u)\in \mcH_{k,l}\brac{\mR^{2m},\mC}$.
\end{definition}
\noindent
Note that the order of the operators is unimportant, in view of lemma \ref{scaled_commutation}. This is also reflected in the following:
\begin{proposition}\label{(C,S_u)-action}
The operators $(C,S_u)$ act as follows on the module $\mV[k,l]$: 
\begin{align*}
C : \mV[k,l] \rightarrow \mV[k,l] & :  C^iS_u^j\mcH_{k,l}\ \mapsto\ C^{i+1}S_u^j\mcH_{k,l}\ , \\
S_u : \mV[k,l] \rightarrow \mV[k,l] & :  C^iS_u^j\mcH_{k,l}\ \mapsto\ \frac{H_x + i + 1}{H_x + 1}\:C^{i}S_u^{j+1}\mcH_{k,l}\ . 
\end{align*}
\end{proposition}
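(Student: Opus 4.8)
\noindent
The plan is to read off both statements from the ``scaled'' commutation relation (\ref{C_Su}) of Lemma \ref{scaled_commutation}, together with a small amount of weight bookkeeping for $H_x$. First I would dispose of the $C$-action, which needs essentially no computation: by Definition \ref{V_A[k,l]} the vectors $C^iS_u^jH_{k,l}(x,u)$ with $i\ge 0$ and $0\le j\le k-l$ span $\mV[k,l]$, and applying $C$ to such a vector just produces $C^{i+1}S_u^jH_{k,l}(x,u)$ --- again one of the spanning vectors, with $j$ unchanged. Hence $C$ preserves $\mV[k,l]$ and acts exactly as stated, and no scalar correction appears precisely because in the chosen basis $C$ already sits on the outside of the monomials.

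The $S_u$-action is the substantive part: the idea is to commute $S_u$ rightwards through the block $C^i$. I would first note that $C$ raises the $x$-degree by one (immediate from its explicit form), so that $[H_x,C]=-C$, equivalently $p(H_x)\,C=C\,p(H_x-1)$ for every rational function $p$ of $H_x$, and in reverse $C\,p(H_x)=p(H_x+1)\,C$. Combining this with (\ref{C_Su}), i.e. $S_uC=\frac{H_x+2}{H_x+1}CS_u$, a short induction on $i$ (whose base case $i=1$ is (\ref{C_Su}) itself) gives
\[
S_uC^i=\Bigl(\prod_{a=0}^{i-1}\frac{H_x+a+2}{H_x+a+1}\Bigr)C^iS_u=\frac{H_x+i+1}{H_x+1}\,C^iS_u,
\]
the product telescoping to the closed form on the right. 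Evaluating both sides on the fixed vector $S_u^jH_{k,l}(x,u)\in\mV[k,l]$ then yields $S_u\bigl(C^iS_u^jH_{k,l}\bigr)=\frac{H_x+i+1}{H_x+1}\,C^iS_u^{j+1}H_{k,l}$, which is exactly the asserted formula; since $C^iS_u^{j+1}H_{k,l}$ is homogeneous it is an $H_x$-eigenvector, so the rational coefficient acts on it as a genuine scalar, and the denominator $H_x+1$ is invertible on it because $m>4$.

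To finish I would check stability of $\mV[k,l]$ at the boundary $j=k-l$: the formula just obtained gives $S_u(C^iS_u^{k-l}H_{k,l})=\frac{H_x+i+1}{H_x+1}\,C^iS_u^{k-l+1}H_{k,l}$, and $S_u^{k-l+1}H_{k,l}=0$ by the lemma preceding Definition \ref{V_A[k,l]}, so the right-hand side vanishes and $S_u$ does map $\mV[k,l]$ into itself. The single step I expect to need care is the direction of the weight shift when dragging the coefficient $\frac{H_x+2}{H_x+1}$ past the successive copies of $C$: one must use $[H_x,C]=-C$, so that $C$ \emph{lowers} $H_x$ --- the opposite sign would break the telescoping --- while everything else is routine.
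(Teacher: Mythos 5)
Your proposal is correct and takes essentially the same route as the paper, which also treats the $C$-action as trivial and obtains the $S_u$-action by repeated application of relation (\ref{C_Su}); your explicit bookkeeping $[H_x,C]=-C$ and the telescoping product merely spell out what the paper leaves implicit, and the identity $S_uC^i=\frac{H_x+i+1}{H_x+1}\,C^iS_u$ you derive is exactly the relation the paper records later as (\ref{relation}) in section \ref{orthogonality}. The boundary check at $j=k-l$ via $S_u^{k-l+1}\mcH_{k,l}=0$ is a sensible extra detail not made explicit in the paper's one-line proof.
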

\begin{proof}
The first statement is trivial, the second follows immediately from a repeated application of relation (\ref{C_Su}). 
\end{proof}
\noindent
Under the action of these two $Z\big(\g,\gs\big)\:$-generators $(C,S_u)$ on $\mcH_{k,l}$ we then obtain an infinite-dimensional strip. Note that this gives an explicit realisation for the way in which $\mcH_{k,l}$ is embedded in all possible tensor products $(a) \otimes (b)$ containing $(k,l)$ as a submodule, according to theorem \ref{Klymik}: 
\begin{equation*} 
C^iS_u^j : \mcH_{k,l}(\mR^{2m},\mC)\hookrightarrow \mcP_{k+i-j,l+i+j}(\mR^{2m},\mC) \cap \ker(\Delta_x,\Delta_u)\ . 
\end{equation*}
This means that we could give an alternative picture for the module $\mV[k,l]$, in which each subspace from definition \ref{V_A[k,l]} is replaced by the tensor product in which this particular subspace can be embedded. For example, putting $(k,l) = (4,2)$, this would lead to the following graphical representation: 
\begin{equation*}
\mV[4,2]\ \sim\ 
\begin{array}{ccccccc}
(4)\otimes(2) & \rightarrow & (5)\otimes(3) & \rightarrow & (6)\otimes(4) & \rightarrow & \cdots\\
\downarrow & & \downarrow & & \downarrow & & \\
(3)\otimes(3) & \rightarrow & (4)\otimes(4) & \rightarrow & (5)\otimes(5) & \rightarrow & \cdots\\
\downarrow & & \downarrow & & \downarrow & & \\
(2)\otimes(4) & \rightarrow & (3)\otimes(5) & \rightarrow & (4)\otimes(6) & \rightarrow & \cdots
\end{array}
\end{equation*}
In this picture, the horizontal and vertical arrows represent the $C$-action and the $S_u$-action respectively. What remains to be investigated is the action of the operators $A$ and $S_x$. As we will prove, these operators correspond to the horizontal and vertical  action `in the opposite direction', which is what one should intuitively expect. The action of the operators $A$ and $S_x$ will also define our projection operators (see section \ref{section 4.3})
\begin{equation*}
\Pi_{i,j} : \mcP_{p,q}(\mR^{2m},\mC) \cap \ker\big(\Delta_x,\Delta_u\big) \longrightarrow \mcH_{p-i+j,q-i-j}(\mR^{2m},\mC)\ . 
\end{equation*}
We will first investigate the action of the operator $S_x$ on the module $\mV[k,l]$ as this is the easiest one: 
\begin{proposition}\label{Sx_action}
The action of the operator $S_x$ on $\mV[k,l]$ is given by: 
\begin{equation*} 
S_x : \mV[k,l] \rightarrow \mV[k,l] : C^iS_u^j\mcH_{k,l}\ \mapsto\ \varphi_{i,j}(k,l) C^{i}S_u^{j-1}\mcH_{k,l}\ , 
\end{equation*}
where we have introduced the factor
\begin{equation*} 
\varphi_{i,j}(k,l) = j\left(k - l - (j-1)\right)\frac{l + j + \frac{m}{2} - 2}{l + i + j + \frac{m}{2} - 2}\ .  
\end{equation*}
\end{proposition}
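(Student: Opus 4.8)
The plan is to derive the formula directly from the quadratic relations collected in Section~\ref{transvector}, without returning to the differential-operator realisation of the generators. Three preliminary observations drive the argument. First, since $\mcH_{k,l}\subset\mcP_{k,l}(\mR^{2m},\mC)$, every vector $C^iS_u^j\mcH_{k,l}$ lies in $\mcP_{k+i-j,\,l+i+j}(\mR^{2m},\mC)$, so $H_x$ and $H_u$ act on it as the scalars $-(k+i-j+\frac{m}{2})$ and $-(l+i+j+\frac{m}{2})$. Second, $[H_u,C]=-C$, whence $g(H_u)\,C=C\,g(H_u-1)$ for every rational function $g$. Third, since $\mcH_{k,l}$ is killed by both $A$ and $S_x$, relation \eqref{A_Su} gives $A\,S_u^sH_{k,l}=0$ for all $s\geq 0$: one commutes $A$ to the right through the powers of $S_u$, picking up only scalar factors, until it annihilates $\mcH_{k,l}$.

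First I would peel off the powers of $C$. Iterating \eqref{C_Sx} together with the shift rule $g(H_u)C=Cg(H_u-1)$ gives, by an immediate induction on $i$,
\begin{equation*}
S_xC^i = C^i\,\frac{H_u+1}{H_u-i+1}\,S_x\ ,
\end{equation*}
so that $S_xC^iS_u^j\mcH_{k,l}=C^i\,\frac{H_u+1}{H_u-i+1}\,S_xS_u^j\mcH_{k,l}$ and the task reduces to evaluating $S_xS_u^jH_{k,l}$ on a fixed $H_{k,l}\in\mcH_{k,l}$.

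For that step I would use the telescoping identity $S_xS_u^j=S_u^jS_x+\sum_{r=0}^{j-1}S_u^r[S_x,S_u]S_u^{j-1-r}$; since $S_xH_{k,l}=0$, only the sum contributes. In each term the $CA$-part of \eqref{Sx_Su} annihilates $S_u^{j-1-r}H_{k,l}$ by the third observation above, so $[S_x,S_u]$ acts there simply as the scalar $-(H_x-H_u)$, which on $S_u^{j-1-r}H_{k,l}\in\mcP_{k-(j-1-r),\,l+(j-1-r)}(\mR^{2m},\mC)$ equals $(k-l)-2(j-1)+2r$. Summing this arithmetic progression over $r=0,\dots,j-1$ yields
\begin{equation*}
S_xS_u^jH_{k,l} = j\big(k-l-(j-1)\big)\,S_u^{j-1}H_{k,l}\ .
\end{equation*}
Substituting this into the identity of the previous paragraph and evaluating the remaining rational factor $\frac{H_u+1}{H_u-i+1}$ on $S_u^{j-1}H_{k,l}$, which has $u$-degree $l+j-1$ and hence carries $H_u=-(l+j-1+\frac{m}{2})$, produces exactly the constant $\varphi_{i,j}(k,l)$ of the statement, completing the proof.

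The only genuine difficulty here is the book-keeping: one must keep the non-commuting rational functions of $H_u$ in their correct position relative to $C$ and $S_u$ and apply the substitution $H_u\mapsto H_u-1$ whenever such a function is moved past a $C$. Once this is done, every coefficient that appears is a true scalar, because all vectors involved are bi-homogeneous. As an alternative, one could instead invoke the $\mathrm{SO}(m)$-invariance of the generators to verify the identity on the explicit highest weight vector $H^{(h)}_{k,l}$, using $S_u^jH^{(h)}_{k,l}=\inner{u,\D_x}^jH^{(h)}_{k,l}$ and the $\spl(2)$-triple $\big(\inner{x,\D_u},\inner{u,\D_x},\mE_x-\mE_u\big)$; this route leads to the same computation.
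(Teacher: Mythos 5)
Your proof is correct and takes essentially the same route as the paper's: you move $S_x$ past $C^i$ via relation \eqref{C_Sx} (your identity $S_xC^i=C^i\,\tfrac{H_u+1}{H_u-i+1}\,S_x$ is the paper's $\tfrac{H_u+i+1}{H_u+1}C^iS_x$ with the rational factor shifted through $C^i$), and you evaluate $S_xS_u^j$ on $\mcH_{k,l}$ using \eqref{Sx_Su} with the $CA$-term killed because \eqref{A_Su} gives $AS_u^s\mcH_{k,l}=0$. The only difference is cosmetic: you replace the paper's inductive reduction by an explicit telescoping sum, and the resulting scalars agree with $\varphi_{i,j}(k,l)$.
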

\begin{proof}
First of all, repeated application of relation (\ref{C_Sx}) leads to
\begin{equation*} S_xC^iS_u^j\mcH_{k,l} = \left(\frac{H_u + 2}{H_u + 1}C\right)^iS_xS_u^j\mcH_{k,l} = \frac{H_u + i + 1}{H_u + 1}C^iS_xS_u^j\mcH_{k,l}\ . 
\end{equation*}
Using relations (\ref{A_Su}) and (\ref{Sx_Su}), it is clear that 
\begin{align*}
S_xS_u^j\bigg\vert_{\mcH_{k,l}} & =  \left(S_uS_x + \frac{H_x - H_u}{(1+H_x)(1+H_u)}CA - (H_x - H_u)\right)S_u^{j-1}\bigg\vert_{\mcH_{k,l}}\\
& =  S_uS_xS_u^{j-1} - (H_x - H_u)S_u^{j-1}\bigg\vert_{\mcH_{k,l}}\ ,
\end{align*}
which can be reduced by an induction argument to 
\begin{equation*} 
S_xS_u^j\bigg\vert_{\mcH_{k,l}} =  -j(H_x - H_u - j+1)S_u^{j-1}\bigg\vert_{\mcH_{k,l}}\ . 
\end{equation*}
Since $[H_x - H_u,C] = 0$, we are indeed lead to the formula above.
\end{proof}
\noindent
More generally, we can prove the following:
\begin{lemma}\label{S_xCS_u}
For all $i, j \in \mN$, we have the following: 
\begin{equation*}
 S_x^jC^iS_u^j\mcH_{k,l} = j!(k - l-j+1)^{(j)}\frac{\left(l + \frac{m}{2} - 1\right)^{(j)}}{\left(l + i + \frac{m}{2} - 1\right)^{(j)}}C^i\mcH_{k,l}\ , 
\end{equation*}
where we have introduced the upper factorial (for $\alpha \in \mR$ and $j \in \mN$)
\begin{equation*}
 \alpha^{(j)} = \alpha(\alpha+1)\ldots(\alpha + j - 1)\ .
\end{equation*}
\end{lemma}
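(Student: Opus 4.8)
The plan is to prove the identity by induction on $j$, with $i$ a free parameter, using the single-step formula for $S_x$ from Proposition~\ref{Sx_action} together with the commutation relation (\ref{C_Sx}) to slide powers of $S_x$ past powers of $C$. For $j=0$ the statement is the tautology $C^i\mcH_{k,l}=C^i\mcH_{k,l}$ (the empty product $\alpha^{(0)}=1$), which anchors the induction. For the inductive step, I would write $S_x^{j}C^iS_u^{j}\mcH_{k,l}=S_x^{j-1}\bigl(S_x C^i S_u\bigr)S_u^{j-1}\mcH_{k,l}$ and first evaluate the innermost application. Since $S_u^{j-1}\mcH_{k,l}$ lives in the weight space where $H_x$ acts as $-(k+\tfrac m2)$ and $H_u$ acts as $-(l+j-1+\tfrac m2)$ (one application of $S_u$ raises the $u$-degree by two, i.e. lowers the $H_u$-eigenvalue by two... more precisely $S_u$ shifts the $(k,l)$-type to $(k{+}1,l{+}1)$-type, so $H_u$ on $C^iS_u^{j}\mcH_{k,l}$ equals $-(l{+}i{+}j{+}\tfrac m2)$), I can read off $\varphi_{i,j}(k,l)$ explicitly from Proposition~\ref{Sx_action}; the key point is that after applying $S_x$ once, the result is a multiple of $C^iS_u^{j-1}\mcH_{k,l}$, but crucially this multiple is \emph{not} the naive product of $\varphi$'s because the $S_u$-tower has been shortened while the $H_u$-eigenvalue seen by each subsequent $S_x$ changes.

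Concretely, the cleanest bookkeeping is to fix $i$ and prove by induction on $j$ that $S_x^{j}C^iS_u^{j}\mcH_{k,l}=\Bigl(\prod_{r=1}^{j}\varphi_{i,r}(k,l)\Bigr)C^i\mcH_{k,l}$, and then to check that $\prod_{r=1}^{j}\varphi_{i,r}(k,l)$ telescopes to the closed form in the statement. Writing $\varphi_{i,r}(k,l)=r\,(k-l-r+1)\,\dfrac{l+r+\frac m2-2}{l+i+r+\frac m2-2}$, the product of the factors $r$ over $r=1,\dots,j$ gives $j!$; the product of $(k-l-r+1)$ gives $(k-l)(k-l-1)\cdots(k-l-j+1)=(k-l-j+1)^{(j)}$ in the upper-factorial notation (read off by reversing the order of factors); and the product $\prod_{r=1}^j\dfrac{l+r+\frac m2-2}{l+i+r+\frac m2-2}$ is $\dfrac{(l+\frac m2-1)^{(j)}}{(l+i+\frac m2-1)^{(j)}}$, since the numerator runs over $l+\tfrac m2-1,\dots,l+j+\tfrac m2-2$ and the denominator over $l+i+\tfrac m2-1,\dots,l+i+j+\tfrac m2-2$. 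Multiplying the three pieces yields exactly the right-hand side of the Lemma.

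The one subtlety that needs care — and which I expect to be the main obstacle — is the inductive bookkeeping of the Cartan eigenvalues: when I peel off the outermost $S_x$ in $S_x^{j}C^iS_u^{j}\mcH_{k,l}$ via $S_x^{j-1}\cdot S_x\cdot C^iS_u^{j}\mcH_{k,l}$, I should instead peel it off from the \emph{inside}, i.e. use $S_x C^i S_u\cdot(S_u^{j-1}\mcH_{k,l})$, because Proposition~\ref{Sx_action} is stated for $S_x$ acting on $C^iS_u^{j}\mcH_{k,l}$ and I must match its hypotheses precisely. The factor $\varphi_{i,j}(k,l)$ produced at each stage is governed by the $H_u$-eigenvalue of the vector $C^iS_u^{j'}\mcH_{k,l}$ currently being acted on, and since $[H_x-H_u,C]=0$ and $S_u$ changes that eigenvalue in a controlled way, the recursion $S_x^{j}C^iS_u^{j}\mcH_{k,l}=\varphi_{i,j}(k,l)\,S_x^{j-1}C^iS_u^{j-1}\mcH_{k,l}$ does hold with the $\varphi$ indexed as in Proposition~\ref{Sx_action}; verifying that the index is $j$ (and not, say, $j$ shifted by the action of the outer $S_x^{j-1}$) is the point where one must not cut corners. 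Once that recursion is established, the telescoping of the product is a routine manipulation of the upper factorials, and the Lemma follows.
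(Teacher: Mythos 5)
Your proposal is correct and follows essentially the same route as the paper: the paper's proof is exactly the repeated application of proposition \ref{Sx_action}, giving $S_x^jC^iS_u^j\mcH_{k,l}=\bigl(\prod_{p=1}^j\varphi_{i,p}(k,l)\bigr)C^i\mcH_{k,l}$, followed by rewriting the product in upper-factorial form as you do. The weight-space bookkeeping you worry about is already absorbed into the statement of proposition \ref{Sx_action}, so the recursion with index $j$ holds directly and no extra care is needed.
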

\begin{proof}
Invoking proposition \ref{Sx_action}, we easily find that
\begin{align*}
S_x^jC^iS_u^j\mcH_{k,l} & =  \brac{\prod_{p = 1}^j\varphi_{i,p}(k,l)}C^i\mcH_{k,l}\\
& =  \brac{\prod_{p=1}^jp\big(k - l - (p-1)\big)\frac{\left(l + \frac{m}{2} + p - 2\right)}{\left(l + i + \frac{m}{2} + p - 2\right)}}C^i\mcH_{k,l}\ ,
\end{align*}
which can easily be rewritten as the formula above. 
\end{proof}
\noindent
Because of Lemmas \ref{lemmaSx_Su} and \ref{AC}, the action of the operator $A$ on $C^j\mcH_{k,l}$  is proportional to $C^{j-1}\mcH_{k,l}$. To investigate this action, we make the following definition:
\begin{definition}\label{Def-A-action}
For all $i \in \mN$, we define the constant $c_i(k,l)$ by means of
\begin{equation*} 
A : \mV[k,l] \rightarrow \mV[k,l] : C^i\mcH_{k,l} \mapsto c_i(k,l)C^{i-1}\mcH_{k,l}\ . 
\end{equation*}
\end{definition}
\noindent 
We want to study this constant in more detail, so let us introduce a few notations. For that purpose, we first rewrite the operator $AC$, taking into account that it is meant to act on $C^i\mcH_{k,l}$. From earlier calculations, using the fact that $S_x$ acts trivially on $C^i\mcH_{k,l}$ for all integers $i \in \mN$, we have that 
\begin{align*}
AC\bigg\vert_{C^i\mcH_{k,l}} & =  \left(1-\frac{1}{(1+H_x)(1 + H_u)}\right)CA - (H_x + H_u) + \frac{[S_x,S_u]}{1+H_x}\\
& =  \psi_1(H_x,H_u)CA + \psi_2(H_x,H_u)\ ,
\end{align*}
where we have introduced the operators
\begin{align*}
\psi_1(H_x,H_u)  :&=  1+\left( \frac{H_x - H_u}{(1+H_x)^2(1+H_u)}-\frac{1}{(1+H_x)(1 + H_u)}\right)=\frac{H_x(H_x+2)}{(H_x+1)^2} \\ 
\intertext{and} 
\psi_2(H_x,H_u) :&=  \frac{H_u - H_x}{1 + H_x} - (H_u + H_x)\ =\ -\frac{H_x(H_x + H_u + 2)}{1 + H_x}\ .
\end{align*}
The eigenvalues of these operators on homogeneous polynomials in $(x,u)$ will get their own symbol, where from now on we use the shorthand notation 
\begin{equation*} 
(K,L) := \left(k + \frac{m}{2},l + \frac{m}{2}\right)\ . 
\end{equation*}
\begin{definition}
On homogeneous polynomials $P_{k,l}(x,u)$ of degree $(k,l)$ on $\mR^{2m}$ we introduce the following eigenvalues for the operators $\psi_j(H_x,H_u)$ from above: 
\begin{align*}
\psi_1(k,l) & :=  \frac{K(K-2)}{(K-1)^2} \\
\psi_2(k,l) & :=  \frac{K(K + L - 2)}{K - 1}
\end{align*}
\end{definition}
\noindent 
Note that $\psi_1(k,l),\psi_2(k,l) > 0$ for all $P_{k,l}(x,u)$.
\begin{lemma}\label{recurrence}
The constant $c_i(k,l)$ satisfies the following recursive relation: 
\begin{equation*} 
c_{i+1}(k,l) = \psi_1(k+i,l+i)c_i(k,l) + \psi_2(k+i,l+i)\ . 
\end{equation*}
\end{lemma}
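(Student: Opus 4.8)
The goal is to express $c_{i+1}(k,l)$, the constant governing the action of $A$ on $C^{i+1}\mcH_{k,l}$, in terms of $c_i(k,l)$. The natural route is to compute $AC^{i+1}\mcH_{k,l}$ in two ways using the relation for $AC$ derived just above the statement, namely
\begin{equation*}
AC\Big\vert_{C^i\mcH_{k,l}} = \psi_1(H_x,H_u)CA + \psi_2(H_x,H_u)\ ,
\end{equation*}
together with Definition \ref{Def-A-action}. First I would write $AC^{i+1}\mcH_{k,l} = (AC)C^i\mcH_{k,l}$ and apply the displayed identity, which is valid precisely because $S_x$ annihilates $C^i\mcH_{k,l}$ for every $i$. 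This turns the left-hand side into $\psi_1(H_x,H_u)CAC^i\mcH_{k,l} + \psi_2(H_x,H_u)C^i\mcH_{k,l}$.

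\noindent
Next I would evaluate each term. By Definition \ref{Def-A-action}, $AC^i\mcH_{k,l} = c_i(k,l)C^{i-1}\mcH_{k,l}$, so $CAC^i\mcH_{k,l} = c_i(k,l)C^i\mcH_{k,l}$; hence the first term is $c_i(k,l)\,\psi_1(H_x,H_u)C^i\mcH_{k,l}$, and the second term is $\psi_2(H_x,H_u)C^i\mcH_{k,l}$. The key point now is that $C^i\mcH_{k,l}$ consists of homogeneous polynomials of bidegree $(k+i,l+i)$ — this follows from the embedding $C^i : \mcH_{k,l} \hookrightarrow \mcP_{k+i,l+i}(\mR^{2m},\mC)\cap\ker(\Delta_x,\Delta_u)$ noted after Proposition \ref{(C,S_u)-action}. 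Therefore the operators $\psi_1(H_x,H_u)$ and $\psi_2(H_x,H_u)$ act on $C^i\mcH_{k,l}$ as the scalars $\psi_1(k+i,l+i)$ and $\psi_2(k+i,l+i)$ respectively, by the definition of these eigenvalues. Collecting terms, $AC^{i+1}\mcH_{k,l} = \big(\psi_1(k+i,l+i)c_i(k,l) + \psi_2(k+i,l+i)\big)C^i\mcH_{k,l}$, and comparing with $AC^{i+1}\mcH_{k,l} = c_{i+1}(k,l)C^i\mcH_{k,l}$ from Definition \ref{Def-A-action} gives the claimed recursion.

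\noindent
This argument is essentially bookkeeping: the substantive input is the relation for $AC$ restricted to $C^i\mcH_{k,l}$ (already established) and the correct identification of the bidegree of $C^i\mcH_{k,l}$ so that the Cartan-valued coefficients $\psi_1,\psi_2$ become the scalars $\psi_1(k+i,l+i),\psi_2(k+i,l+i)$. The only place one must be slightly careful is the ordering of operators in $\psi_1(H_x,H_u)CA$: one should move the scalar $c_i(k,l)$ out (harmless, it is a number) but keep in mind that $\psi_1$ is evaluated after $C$ has raised the bidegree, which is automatic once everything is applied to $C^i\mcH_{k,l}$ and then re-expressed on $C^i\mcH_{k,l}$ on the right. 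I do not anticipate a genuine obstacle here; the hard work was front-loaded into Lemmas \ref{lemmaSx_Su} and \ref{AC} and the computation of $\psi_1,\psi_2$.
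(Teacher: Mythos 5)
Your proposal is correct and follows essentially the same route as the paper's proof: apply the relation $AC\vert_{C^i\mcH_{k,l}} = \psi_1(H_x,H_u)CA + \psi_2(H_x,H_u)$ to $C^i\mcH_{k,l}$ and use that its elements are homogeneous of bidegree $(k+i,l+i)$, so the Cartan-valued coefficients act as the scalars $\psi_1(k+i,l+i)$ and $\psi_2(k+i,l+i)$. Your extra remark about the ordering (the eigenvalue of $\psi_1$ being read off after $C$ restores the bidegree) is a correct and slightly more careful articulation of what the paper leaves implicit.
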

\begin{proof}
This follows from the fact that 
\begin{align*}
AC^{i+1}\mcH_{k,l} & =  \big(\psi_1(H_x,H_u)CA + \psi_2(H_x,H_u)\big)C^i\mcH_{k,l}\\
& =  \big(\psi_1(k+i,l+i)c_i(k,l) + \psi_2(k+i,l+i)\big)C^i\mcH_{k,l}\ ,
\end{align*}
as elements in $C^i\mcH_{k,l}$ are homogeneous of degree $(k+i,l+i)$ in the variables $(x,u)$. 
\end{proof}
\noindent
Note that $c_i(k,l) \neq 0$ for all integers $(k,l)$ and $i > 0$ (see below),  so we have the following conclusion:
\begin{equation*}
A^iC^i\mcH_{k,l} = \left(\prod_{s = 1}^ic_s(k,l)\right)\mcH_{k,l} \neq 0\ , 
\end{equation*}
which after inversion yields 
\begin{equation*}
A^iC^i\left(\frac{1}{\prod_{s = 1}^ic_s(k,l)} H_{k,l}(x,u)\right) = H_{k,l}(x,u)\ .
\end{equation*}
The following proposition explicitly solves the recursive relation from lemma \ref{recurrence}:
\begin{proposition}
For all integers $i>0$, the constant $c_i(k,l)$ is of the following form:
\begin{equation*}
c_i(k,l)=i\frac{\brac{k+\frac{m}{2}+i-1}(k+l+m+i-3)}{k+\frac{m}{2}+i-2}
\end{equation*} 
\end{proposition}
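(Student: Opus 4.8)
The plan is to solve the first-order inhomogeneous recursion from Lemma \ref{recurrence} explicitly and then verify that the claimed closed form satisfies it together with the correct initial value. First I would establish the base case $i=1$: from Definition \ref{Def-A-action} and the relation $AC = \psi_1(H_x,H_u)CA + \psi_2(H_x,H_u)$ (with $A$ acting trivially on $\mcH_{k,l}$, so $CA\,\mcH_{k,l}=0$), one gets $c_1(k,l) = \psi_2(k,l) = K(K+L-2)/(K-1)$; substituting $i=1$ into the proposed formula and using $(K,L)=(k+\tfrac m2,l+\tfrac m2)$ gives $1\cdot (K)(K+L-2)/(K-1)$, so the two agree. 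This also confirms the non-vanishing claim $c_i(k,l)\neq 0$, since by the sign remark $\psi_1(k+i,l+i),\psi_2(k+i,l+i)>0$ and hence all $c_i(k,l)$ are strictly positive by induction on Lemma \ref{recurrence}.

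Next I would run the induction step: assume $c_i(k,l) = i\,(K+i-1)(k+l+m+i-3)/(K+i-2)$ and plug it into $c_{i+1}(k,l) = \psi_1(k+i,l+i)\,c_i(k,l) + \psi_2(k+i,l+i)$. Here $\psi_1(k+i,l+i) = (K+i)(K+i-2)/(K+i-1)^2$ and $\psi_2(k+i,l+i) = (K+i)(K+L+2i-2)/(K+i-1)$, noting that $k+l+m+i-3 = K+L+i-3$ and $K+L+2i-2$ is the value of $H_x+H_u+2$ shifted appropriately. The computation is then a routine but slightly delicate common-denominator manipulation: both terms carry a factor $(K+i)/(K+i-1)$, and after factoring it out one must check that
\[
\frac{(K+i-2)}{(K+i-1)}\cdot i\,(K+L+i-3) + (K+L+2i-2) \;=\; (i+1)\frac{(K+L+i-2)(K+i-1)}{(K+i-1)}\cdot\frac{1}{\text{(matching factor)}}\ ,
\]
i.e. that the numerator collapses to $(i+1)(K+i)(K+L+i-2)/(K+i-1)$ after the $(K+i)/(K+i-1)$ is restored. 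Clearing the remaining denominator $(K+i-1)$ reduces everything to a polynomial identity in $K$, $L$, $i$ which can be expanded and checked term by term.

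Alternatively — and this is the cleaner route I would actually write up — one can avoid guessing by solving the recursion directly. Writing $c_i = \psi_1(k+i-1,l+i-1)c_{i-1} + \psi_2(k+i-1,l+i-1)$ and observing that the homogeneous multipliers telescope, $\prod_{s=1}^{i}\psi_1(k+s-1,l+s-1) = \prod_{s}\frac{(K+s-1)(K+s-3)}{(K+s-2)^2}$, which is a double telescoping product collapsing to $\frac{K-1}{K+i-1}\cdot\frac{K+i-2}{K-2}$ up to boundary terms, one gets a summation formula for $c_i$; the sum over $\psi_2$ terms weighted by the telescoped $\psi_1$'s is then a finite sum that can be evaluated in closed form. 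The main obstacle in either approach is purely bookkeeping: keeping the shifted arguments $K+s$ versus $K+i$ straight and not dropping a boundary factor in the telescoping product. I would therefore present the induction verification (it is the shortest and least error-prone), reducing in the final step to a single polynomial identity in $K,L,i$ whose expansion I would simply state rather than display in full.
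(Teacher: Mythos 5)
Your proposal is correct and follows essentially the same route as the paper's proof: the base case $c_1(k,l)=\psi_2(k,l)$ obtained from $c_0(k,l)=0$, followed by an inductive verification of the recurrence of lemma \ref{recurrence}, where after factoring out $\frac{K+i}{K+i-1}$ the remaining bracket collapses to $(i+1)(K+L+i-2)$. The only blemish is that your displayed intermediate identity is garbled (the leftover of $\psi_1(k+i,l+i)c_i(k,l)$ after that factoring is simply $i(K+L+i-3)$, with the factors $K+i-2$ and $K+i-1$ cancelling against those in $c_i$), but the polynomial identity you defer to does hold and the argument goes through exactly as in the paper.
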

\begin{proof}
To prove the proposition, we will make use of the recurrence relations obtained in lemma \ref{recurrence}:
\begin{equation*} 
c_{i+1}(k,l) = \psi_1(k+i,l+i)c_i(k,l) + \psi_2(k+i,l+i)\ . 
\end{equation*}
Substituting the expressions for $\psi_1(k+i,l+i),\psi_2(k+i,l+i)$, we obtain
\begin{equation}\label{ProofRecurrence}
c_{i+1}(k,l)=\frac{K+i}{K+i-1}\brac{\frac{K+i-2}{K+i-1}c_i(k,l)+(K+L+2(i-1)},
\end{equation}
where we used the shorthand notation $(K,L) := \left(k + \frac{m}{2},l + \frac{m}{2}\right)$ introduced earlier. Since it is known from definition \ref{Def-A-action} that $c_0(k,l)=0$, we have
\begin{equation*}
c_1(k,l)=\psi_2(k,l)=\frac{K(K + L - 2)}{K - 1}.
\end{equation*}
Should the expression for $c_i(k,l)$ be correct for all integers $i>0$, then we have to obtain the expression for $c_{i+1}(k,l)$ after substituting $c_i(k,l)$ in equation (\ref{ProofRecurrence}). This yields: 
\begin{align*}
c_{i+1}(k,l)&=\frac{K+i}{K+i-1}\brac{j(K+L+i-3)+(K+L+2(i-1)} \\
&=(i+1)\frac{(K+i)(K+L+i-2)}{K+i-1},
\end{align*}
which is indeed of the correct form. 
\end{proof}
\noindent
Finally, we have the following: 
\begin{proposition}\label{A-action}
The action of the operator $A$ is given by: 
\begin{equation*}
 A : \mV[k,l] \rightarrow \mV[k,l] : C^iS_u^j\mcH_{k,l} \mapsto \psi_{i,j}(k,l)C^{i-1}S_u^{j}\mcH_{k,l}\ , 
\end{equation*}
where we have introduced the factor
\begin{equation*}
\psi_{i,j}(k,l) =i \frac{\left(k + \frac{m}{2} + i - 1\right)\left(l + \frac{m}{2} + i - 2\right)\brac{k+l+m+i-3}}{\left(k + \frac{m}{2} + i -j-2\right)\left(l + \frac{m}{2} + i +j- 2\right)}\ .
\end{equation*}
\end{proposition}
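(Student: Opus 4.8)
The plan is to show first that $AC^iS_u^j\mcH_{k,l}$ is a scalar multiple of $C^{i-1}S_u^j\mcH_{k,l}$, and then to determine the scalar by applying the power $S_x^j$ to both sides of the resulting identity and invoking the formulas already obtained. For the proportionality, note that $A$ and $S_u$ scale-commute (relation (\ref{A_Su})) and that $C$ and $S_u$ scale-commute (relation (\ref{C_Su})); since in addition each of $A$, $C$, $S_u$ shifts the $\goh$-weight in a controlled way, one can reorder $C^iS_u^j=\sigma\,S_u^jC^i$ for a suitable $\sigma\in\mcU'(\goh)$ and then move $A$ to the right past the $S_u$'s, which reduces the expression to $AC^i\mcH_{k,l}=c_i(k,l)C^{i-1}\mcH_{k,l}$ of Definition \ref{Def-A-action}. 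All scalars generated in this process are values of elements of $\mcU'(\goh)$ on homogeneous polynomials of a fixed bidegree, so the outcome is a multiple of $C^{i-1}S_u^j\mcH_{k,l}$, which I call $\psi_{i,j}(k,l)$; for $i=0$ both sides vanish (since $AS_u^j=\frac{H_u+j+1}{H_u+1}S_u^jA$ and $A\mcH_{k,l}=0$), in accordance with the factor $i$ in the asserted formula.

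Now apply $S_x^j$ to the identity $AC^iS_u^j\mcH_{k,l}=\psi_{i,j}(k,l)\,C^{i-1}S_u^j\mcH_{k,l}$. On its right-hand side, Lemma \ref{S_xCS_u} with $i$ replaced by $i-1$ gives
\begin{equation*}
S_x^jC^{i-1}S_u^j\mcH_{k,l}=j!\,(k-l-j+1)^{(j)}\,\frac{\left(l+\frac m2-1\right)^{(j)}}{\left(l+i+\frac m2-2\right)^{(j)}}\,C^{i-1}\mcH_{k,l}\ .
\end{equation*}
On the left-hand side, a short induction from relation (\ref{A_Sx}) (using $S_xf(H_x)=f(H_x+1)S_x$) gives $AS_x^j=\frac{H_x+j+1}{H_x+1}S_x^jA$, hence $S_x^jA=\frac{H_x+1}{H_x+j+1}AS_x^j$, the fractions being elements of the commutative algebra $\mcU'(\goh)$; combining this with Lemma \ref{S_xCS_u} applied to $S_x^jC^iS_u^j\mcH_{k,l}$ and then with Definition \ref{Def-A-action} applied to $AC^i\mcH_{k,l}$ yields
\begin{equation*}
S_x^jAC^iS_u^j\mcH_{k,l}=j!\,(k-l-j+1)^{(j)}\,\frac{\left(l+\frac m2-1\right)^{(j)}}{\left(l+i+\frac m2-1\right)^{(j)}}\,c_i(k,l)\,\frac{H_x+1}{H_x+j+1}\,C^{i-1}\mcH_{k,l}\ .
\end{equation*}
Since $C^{i-1}\mcH_{k,l}$ is homogeneous of bidegree $(k+i-1,l+i-1)$, the operator $\frac{H_x+1}{H_x+j+1}$ acts on it as the scalar $\frac{k+m/2+i-2}{k+m/2+i-j-2}$.

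Equating the two displays, cancelling the common nonzero factor $j!\,(k-l-j+1)^{(j)}\left(l+\frac m2-1\right)^{(j)}$ (which is nonzero exactly for $0\le j\le k-l$) and the vector $C^{i-1}\mcH_{k,l}$, one obtains
\begin{equation*}
\psi_{i,j}(k,l)=c_i(k,l)\,\frac{k+\frac m2+i-2}{k+\frac m2+i-j-2}\cdot\frac{\left(l+i+\frac m2-2\right)^{(j)}}{\left(l+i+\frac m2-1\right)^{(j)}}\ ,
\end{equation*}
where the quotient of upper factorials telescopes to $\frac{l+m/2+i-2}{l+m/2+i+j-2}$. Substituting $c_i(k,l)=i\,\frac{(k+m/2+i-1)(k+l+m+i-3)}{k+m/2+i-2}$ and cancelling the factor $k+\frac m2+i-2$ gives precisely the stated expression for $\psi_{i,j}(k,l)$.

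The part requiring the most care is the bookkeeping with the Cartan operators: deriving $AS_x^j=\frac{H_x+j+1}{H_x+1}S_x^jA$ and then evaluating the rational operator $\frac{H_x+1}{H_x+j+1}$ at exactly the correct bidegree --- a mistake in either shift changes the final formula. One should also observe that the denominators occurring here, such as $k+\frac m2+i-j-2$, do not vanish in the relevant range $i\ge 1$, $0\le j\le k-l$ (using $m>4$), which is exactly the situation the localisation $\mcU'(\g)$ is designed to handle.
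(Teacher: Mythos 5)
Your proof is correct and follows essentially the same route as the paper: first establish that $A$ maps $C^iS_u^j\mcH_{k,l}$ into $C^{i-1}S_u^j\mcH_{k,l}$, then pin down the constant by hitting the identity with $S_x^j$ (the paper uses $S_u^jS_x^j$, an immaterial difference), evaluating one side via Lemma \ref{S_xCS_u} with $i-1$ and the other via the relation $S_x^jA=\frac{H_x+1}{H_x+j+1}AS_x^j$, Lemma \ref{S_xCS_u} with $i$, and $c_i(k,l)$. The weight bookkeeping, the evaluation of $\frac{H_x+1}{H_x+j+1}$ at bidegree $(k+i-1,l+i-1)$, and the final simplification all match the paper's computation.
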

\begin{proof}
First of all, one can easily prove by induction on the exponent $i \in \mN$ that the operator $A \in Z\big(\g,\gs\big)$ indeed maps $C^iS_u^j\mcH_{k,l} \mapsto C^{i-1}S_u^{j}\mcH_{k,l}$. It suffices to use lemma \ref{AC} to see this. Next, we need to find an expression for the constant $\psi_{i,j}(k,l)$. To do so, we calculate
\begin{align*}
S_u^jS_x^jAC^iS_u^j\mcH_{k,l} & =  \psi_{i,j}(k,l)S_u^jS_x^jC^{i-1}S_u^j\mcH_{k,l}\\
& =  \psi_{i,j}(k,l)\left(j!(k - l-j+1)^{(j)}\frac{\left(l + \frac{m}{2} - 1\right)^{(j)}}{\left(l + i + \frac{m}{2} - 2\right)^{(j)}}\right)S_u^jC^{i-1}\mcH_{k,l}\ .
\end{align*}
On the other hand, we can also make use of the fact that 
\begin{equation*}
S_x^jA = \frac{H_x + 1}{H_x + j + 1}AS_x^j
\end{equation*}
to calculate the same expression ([LHS], for left-hand side) in a second way: 
\begin{align*}
\mbox{[LHS]} & =  \frac{k + i + \frac{m}{2} - 2}{k + i + \frac{m}{2} - 2 - j}S_u^jAS_x^jC^iS_u^j\mcH_{k,l}\\
& =  \frac{k + i + \frac{m}{2} - 2}{k + i + \frac{m}{2} - 2 - j}\left(j!(k - l-j+1)^{(j)}\frac{\left(l + \frac{m}{2} - 1\right)^{(j)}}{\left(l + i + \frac{m}{2} - 1\right)^{(j)}}\right)c_i(k,l)S_u^jC^{i-1}\mcH_{k,l}
\end{align*}
from which the result then indeed follows. 
\end{proof}
\noindent
More generally, we can prove the following:
\begin{proposition}\label{AS_x-action}
For $i \geq p$ and $j \geq q$, the action of the operator $A^pS_x^q$ is given by: 
\begin{equation*}
A^pS_x^q : \mV[k,l] \rightarrow \mV[k,l] : C^iS_u^j\mcH_{k,l} \mapsto \alpha_{i,j}^{p,q}(k,l)C^{i-p}S_u^{j-q}\mcH_{k,l}\ , 
\end{equation*}
where the constant $\alpha_{i,j}^{p,q}(k,l)$ is defined as:
\begin{equation*}
\begin{split}
\alpha_{i,j}^{p,q}(k,l):=&(i)_{(p)}(j)_{(q)}(k - l-j+1)^{(q)}\frac{\brac{k+\frac{m}{2}+i-1}_{(p)}}{\brac{k+\frac{m}{2}+i-j+q-2}_{(p)} } \\ 
&\times \frac{ \left(l + \frac{m}{2}+j-2\right)_{(q)}\brac{l+\frac{m}{2}+i-2}_{(p)} }{ \left(l+\frac{m}{2}+i+j-2\right)_{(q)} \brac{l+\frac{m}{2}+i+j-q-2}_{(p)} }\brac{k+l+m+i-3}_{(p)},
\end{split}
\end{equation*}
where we have introduced the falling factorial (for $\alpha \in \mR$ and $j \in \mN$)
\begin{equation*}
 \alpha_{(j)} = \alpha(\alpha-1)\ldots(\alpha - j + 1)\ .
\end{equation*}
\end{proposition}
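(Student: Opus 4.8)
\noindent
The plan is to read off the constant $\alpha_{i,j}^{p,q}(k,l)$ by letting the two homogeneous blocks of $A^pS_x^q$ act one after the other on the basis vector $C^iS_u^j\mcH_{k,l}$, using only the single-operator formulas from Propositions~\ref{Sx_action} and~\ref{A-action}. First I would apply $S_x^q$: since each copy of $S_x$ lowers the exponent of $S_u$ by one and leaves the exponent of $C$ unchanged, iterating Proposition~\ref{Sx_action} gives
\begin{equation*}
S_x^q\,C^iS_u^j\mcH_{k,l}=\left(\prod_{t=1}^{q}\varphi_{i,\,j-t+1}(k,l)\right)C^iS_u^{j-q}\mcH_{k,l}\ .
\end{equation*}
Then I would apply $A^p$, each copy of $A$ lowering the exponent of $C$ by one while leaving that of $S_u$ fixed, so that iterating Proposition~\ref{A-action} yields
\begin{equation*}
A^p\,C^iS_u^{j-q}\mcH_{k,l}=\left(\prod_{s=1}^{p}\psi_{i-s+1,\,j-q}(k,l)\right)C^{i-p}S_u^{j-q}\mcH_{k,l}\ .
\end{equation*}
In particular this already establishes that $A^pS_x^q$ maps $C^iS_u^j\mcH_{k,l}$ into $\Span(C^{i-p}S_u^{j-q}\mcH_{k,l})$ whenever $i\geq p$ and $j\geq q$, with
\begin{equation*}
\alpha_{i,j}^{p,q}(k,l)=\left(\prod_{t=1}^{q}\varphi_{i,\,j-t+1}(k,l)\right)\left(\prod_{s=1}^{p}\psi_{i-s+1,\,j-q}(k,l)\right)\ .
\end{equation*}

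\noindent
It then remains to substitute the explicit expressions for $\varphi$ and $\psi$ and to recognise two telescoping products. For the first factor I would set $j'=j-t+1$, so that $j'$ runs over the consecutive integers $j-q+1,\dots,j$, and treat the four factors of $\varphi_{i,j'}(k,l)$ separately: the integers $j'$ assemble into the falling factorial $(j)_{(q)}$, the integers $k-l-j'+1$ into the rising factorial $(k-l-j+1)^{(q)}$, and the two $\tfrac{m}{2}$-shifted linear factors into $(l+\tfrac{m}{2}+j-2)_{(q)}$ in the numerator and $(l+\tfrac{m}{2}+i+j-2)_{(q)}$ in the denominator. For the second factor I would substitute $i\mapsto i-s+1$ and $j\mapsto j-q$ in $\psi_{i,j}(k,l)$ with $s$ running over $1,\dots,p$: the leading factors $i-s+1$ give $(i)_{(p)}$, the three numerator factors give $(k+\tfrac{m}{2}+i-1)_{(p)}(l+\tfrac{m}{2}+i-2)_{(p)}(k+l+m+i-3)_{(p)}$, and the two denominator factors give $(k+\tfrac{m}{2}+i-j+q-2)_{(p)}(l+\tfrac{m}{2}+i+j-q-2)_{(p)}$. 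Multiplying the two products together and rearranging the factors then reproduces exactly the stated closed form for $\alpha_{i,j}^{p,q}(k,l)$.

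\noindent
I do not expect a conceptual obstacle here: the argument is nothing but the composition of two formulas already proven, and the real work is purely organisational, namely keeping the two telescoping products and the various $\tfrac{m}{2}$-shifts straight and respecting the order in which the individual factors of $\varphi$ and $\psi$ are evaluated. One should also check, so that the formula is well-defined, that none of the denominators vanish: for $m>4$ and $k\geq l\geq 0$, and using $0\leq j\leq k-l$ together with $i\geq p$ and $j\geq q$, every linear factor of the form $l+\tfrac{m}{2}+\cdots$ appearing above is strictly positive. As an independent consistency check one could also prove the statement by double induction on $(p,q)$, moving the two blocks past one another with the scaled relation $S_x^qA=\tfrac{H_x+1}{H_x+q+1}AS_x^q$ in the same spirit as the proof of Proposition~\ref{A-action}.
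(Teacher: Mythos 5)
Your proposal is correct and follows essentially the same route as the paper: the paper's proof likewise composes a (slightly modified, i.e.\ $q\leq j$) iteration of the $S_x$-action from Lemma \ref{S_xCS_u}/Proposition \ref{Sx_action} with repeated application of Definition \ref{Def-A-action} and Proposition \ref{A-action}, and your telescoping products reproduce the stated constant $\alpha_{i,j}^{p,q}(k,l)$ exactly.
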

\begin{proof}
The proof follows from a slightly modified version of lemma \ref{S_xCS_u} and repeated application of definition \ref{Def-A-action} and proposition \ref{A-action}. 
\end{proof}
\noindent

%%%%%%%%%%%%%%%%%%%%%%%%%%%%%%%%%%%%%%%%%%%%%
%%%%%%%%%%%%%%%%%%%%%%%%%%%%%%%%%%%%%%%%%%%%%

\subsection{Main result of the paper} 

We will now prove that the space $\mcP(\mR^{2m},\mC)\cap \ker\big(\Delta_x,\Delta_u\big)$ can be decomposed into irreducible 
representation for $Z\big(\g,\gs\big) \times \mathrm{SO}(m)$ and that the decomposition is multiplicity free. To do so, we need the infinite-dimensional module $\mV[k,l]$ for the algebra $Z\big(\g,\gs\big)$, defined for all pairs 
of integers $k \geq l \geq 0$. 
\begin{theorem}\label{Main-theorem}
(Generalised Howe duality) Under the joint action of $Z\big(\g,\gs\big) \times \mathrm{SO}(m)$, the space $\mcP(\mR^{2m},\mC)\cap \ker\big(\Delta_x,\Delta_u\big) $ decomposes as follows:
\begin{equation*}
\mcP(\mR^{2m},\mC)\cap \ker\big(\Delta_x,\Delta_u\big)  \cong \bigoplus_{k \geq l} \mV\comm{k,l} \otimes \mcH_{k,l}(\mR^{2m},\mC)\ . 
\end{equation*}
The summation is hereby performed over all pairs of integers $(k,l) \in \mZ^+\times\mZ^+$ satisfying the dominant weight condition $k \geq l \geq 0$.
\end{theorem}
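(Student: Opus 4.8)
The plan is to write down an explicit, bidegree‑preserving isomorphism
\[
\Psi:\ \bigoplus_{k\geq l}\mV\comm{k,l}\otimes\mcH_{k,l}(\mR^{2m},\mC)\ \longrightarrow\ \mcP(\mR^{2m},\mC)\cap\ker\big(\Delta_x,\Delta_u\big)
\]
of $Z\big(\g,\gs\big)\times\SO(m)$-modules and then to verify it is an isomorphism bidegree by bidegree. Concretely, using the realisation of $\mV\comm{k,l}$ from definition \ref{V_A[k,l]} with its distinguished basis $\{w_{i,j}:i\in\mN,\ 0\leq j\leq k-l\}$ on which the generators $C,S_x,S_u,A$ act through the scalars of propositions \ref{(C,S_u)-action}, \ref{Sx_action} and \ref{A-action}, I would set $\Psi(w_{i,j}\otimes H):=C^iS_u^jH$ for $H\in\mcH_{k,l}(\mR^{2m},\mC)$. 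Because those scalars do not depend on the chosen $H$, and because each generator of $Z\big(\g,\gs\big)$ is obtained from $\SO(m)$-invariant operators through $\pi_\gs$ and hence commutes with the regular $\SO(m)$-action, $\Psi$ is automatically a morphism of $Z\big(\g,\gs\big)\times\SO(m)$-modules; and since $C^iS_u^j$ sends bidegree $(k,l)$ to $(k+i-j,\,l+i+j)$, the map $\Psi$ is graded, so it suffices to analyse it on each (finite-dimensional) bidegree component $(p,q)$.

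Next I would establish injectivity in a fixed bidegree $(p,q)$. For each $(k,l)$ there is at most one pair $(i,j)$ with $k+i-j=p$ and $l+i+j=q$, namely $(i,j)=\big(\tfrac{p+q-k-l}{2},\tfrac{q-p+k-l}{2}\big)$; when this pair is admissible ($i\geq0$, $0\leq j\leq k-l$) the summand $\mV\comm{k,l}\otimes\mcH_{k,l}(\mR^{2m},\mC)$ contributes a single copy of $\mcH_{k,l}(\mR^{2m},\mC)$, and for that pair I would invoke proposition \ref{AS_x-action} — equivalently lemma \ref{S_xCS_u} together with the non-vanishing of $\prod_{s=1}^{i}c_s(k,l)$ recorded after lemma \ref{recurrence} — to see that $A^iS_x^jC^iS_u^j$ acts on $\mcH_{k,l}(\mR^{2m},\mC)$ as a nonzero scalar, whence $C^iS_u^j$ is injective on $\mcH_{k,l}(\mR^{2m},\mC)$. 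Its image is then an irreducible $\SO(m)$-module of type $(k,l,0,\dots,0)$; images coming from different $(k,l)$ lie in different $\SO(m)$-isotypic pieces, and for a fixed $(k,l)$ only one admissible $(i,j)$ occurs in bidegree $(p,q)$, so all these images are linearly independent and $\Psi$ is injective in bidegree $(p,q)$.

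Finally I would close by a dimension count and read off irreducibility. As an $\SO(m)$-module $\mcP_{p,q}(\mR^{2m},\mC)\cap\ker(\Delta_x,\Delta_u)\cong(p)\otimes(q)$, and by Klymik's theorem \ref{Klymik} this is the multiplicity-free direct sum of the $\mcH_{k,l}(\mR^{2m},\mC)$ with $k+l\leq p+q$, $k-l\geq|p-q|$ and $k+l\equiv p+q\pmod2$; a short computation identifies these with exactly the $(k,l)$ for which the pair $(i,j)$ above is admissible, i.e. exactly the summands of the left-hand side contributing to bidegree $(p,q)$, each with multiplicity one on both sides. Hence source and target of $\Psi$ have equal finite dimension in each bidegree, so the injective map $\Psi$ is an isomorphism there, and assembling over all bidegrees yields the theorem; irreducibility of each $\mV\comm{k,l}$ over $Z\big(\g,\gs\big)$ (by propositions \ref{(C,S_u)-action}, \ref{Sx_action} and \ref{A-action} a suitable $A^iS_x^j$ sends any nonzero weight vector to a nonzero multiple of $\mcH_{k,l}$, which then spreads out under $C$ and $S_u$) and of each $\mcH_{k,l}(\mR^{2m},\mC)$ over $\SO(m)$ make the decomposition multiplicity free. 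The main obstacle I expect is the purely computational point hidden above: checking that the scalar by which $A^iS_x^jC^iS_u^j$ acts on $\mcH_{k,l}(\mR^{2m},\mC)$ is genuinely nonzero over the whole admissible range $i\in\mN$, $0\leq j\leq k-l$, $k\geq l\geq0$ — this is where the standing hypothesis $m>4$ is used, to keep every shifted factor of the form $l+\tfrac m2+\cdots$ or $k+\tfrac m2+\cdots$ off zero — and, hand in hand with it, verifying that the admissible range of $(i,j)$ matches Klymik's range of $(k,l)$ precisely, which is what makes the dimension count close.
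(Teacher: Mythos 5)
Your proposal is correct and follows essentially the same route as the paper: the intertwining map $C^iS_u^jH_{k,l}$, surjectivity via Klymik's theorem \ref{Klymik} combined with bookkeeping of the bidegrees, and injectivity backed by the non-vanishing scalars of proposition \ref{AS_x-action}. You merely spell out in more detail (isotypic separation, explicit dimension count per bidegree) what the paper's shorter proof leaves implicit.
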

\begin{proof}
To prove the isomorphism, we start with the following: there is an injective $Z\big(\g,\gs\big) \times \mathrm{SO}(m)$-intertwining map given by
\begin{equation*}
\begin{split}
\mV\comm{k,l} \otimes \mcH_{k,l}(\mR^{2m},\mC) &\longrightarrow \mcP(\mR^{2m},\mC)\cap \ker\big(\Delta_x,\Delta_u\big) \\
\Phi(C)S_u^j\, 1_{\comm{k,l}}\otimes H_{k,l}(x,u) &\mapsto \Phi(C)S_u^jH_{k,l}(x,u),
\end{split}
\end{equation*}
where $\Phi(C)$ is a polynomial in the variable $C$ and $1_{\comm{k,l}}$ is the highest weight vector of the $Z\big(\g,\gs\big)$-module $\mV\comm{k,l}$ introduced in the previous section with the action of the generators as in propositions \ref{(C,S_u)-action}, \ref{Sx_action} and \ref{A-action}. We still need to prove that the intertwining map is surjective. To do so, it suffices to prove that we can obtain $\mcP_{k,l}\brac{\mR^{2m},\mC}\cap \ker(\Delta_x,\Delta_u)$, for every $(k,l)\in \mN \times \mN$. We will assume that $k \geq l$, as the proof for the other case is completely similar. Using theorem \ref{Klymik} and counting of the degrees of homogeneity, we obtain
\begin{equation*}
\mcP_{k,l}\brac{\mR^{2m},\mC}\cap \ker(\Delta_x,\Delta_u)=\bigoplus_{i=0}^{l}\bigoplus_{j=0}^{l-i} C^iS_u^j\mcH_{k-i+j,l-i-j}\brac{\mR^{2m},\mC} \ ,
\end{equation*}
which completes the proof of the theorem. 
\end{proof}
\noindent
As a consequence of theorem \ref{Main-theorem}, we have the following result:
\begin{corollary}\label{Main-decomposition}
From the $Z\big(\g,\gs\big)\:$-module $\mV[k,l]$, we obtain an explicit decomposition of the space of double harmonics into simplicial harmonics: 
\begin{equation*}
\mcP_{k,l}\brac{\mR^{2m},\mC}\cap \ker(\Delta_x,\Delta_u)=\bigoplus_{i=0}^{l}\bigoplus_{j=0}^{l-i} C^iS_u^j\mcH_{k-i+j,l-i-j}\brac{\mR^{2m},\mC}.
\end{equation*}
\end{corollary}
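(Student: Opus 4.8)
The plan is to observe that this corollary is essentially a restatement of the surjectivity half of Theorem \ref{Main-theorem}, but now read as a concrete identity of $\SO(m)$-modules inside the space of double harmonics. Since the decomposition follows from the generalised Howe duality just established, the work lies in making the bookkeeping of degrees explicit and in checking that the summands really are the images $C^iS_u^j\mcH_{k-i+j,l-i-j}$ and that they fill up the whole space without overlap.

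First I would record that, as an $\SO(m)$-module, $\mcP_{k,l}(\mR^{2m},\mC)\cap\ker(\Delta_x,\Delta_u)\cong (k)\otimes(l)$, since the double harmonic space of bidegree $(k,l)$ is precisely $\mcH_k(\mR^m,\mC)\otimes\mcH_l(\mR^m,\mC)$. Applying Theorem \ref{Klymik} to this tensor product gives
\begin{equation*}
(k)\otimes(l)\cong\bigoplus_{i=0}^{l}\bigoplus_{j=0}^{l-i}(k-i+j,\,l-i-j)\ ,
\end{equation*}
so the right-hand side of the claimed formula has exactly the correct $\SO(m)$-isotypic content, summand by summand, provided each $(k-i+j,l-i-j)$ with $0\le i\le l$, $0\le j\le l-i$ is dominant — which it is, since $k-i+j\ge l-i-j\ge 0$ in this range. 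Next I would use the embedding already noted in the text,
\begin{equation*}
C^iS_u^j:\mcH_{k-i+j,\,l-i-j}(\mR^{2m},\mC)\hookrightarrow\mcP_{k,l}(\mR^{2m},\mC)\cap\ker(\Delta_x,\Delta_u)\ ,
\end{equation*}
which is well defined because $C$ raises both bidegrees by one and $S_u$ raises $\deg_u$ while lowering $\deg_x$; injectivity follows from Proposition \ref{A-action} and Lemma \ref{S_xCS_u}, since $A^iS_x^j$ acts on $C^iS_u^j\mcH_{k-i+j,l-i-j}$ as a nonzero scalar (the constants $c_s$ and $\varphi_{\cdot,\cdot}$ being nonvanishing in this regime). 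Thus each $C^iS_u^j\mcH_{k-i+j,l-i-j}$ is a copy of the irreducible $\SO(m)$-module $(k-i+j,l-i-j)$ sitting inside the double harmonic space.

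To finish, I would argue that the sum on the right is direct and exhausts the space. Directness is immediate from the $\SO(m)$-isotypic decomposition: by Theorem \ref{Klymik} each highest weight $(k-i+j,l-i-j)$ occurs with multiplicity one in $(k)\otimes(l)$, so distinct pairs $(i,j)$ in the index range yield submodules with disjoint isotypic support, hence independent. Surjectivity then follows by a dimension (or isotypic) count: the right-hand side contains one copy of every irreducible constituent of $(k)\otimes(l)$ with the correct multiplicity, so the inclusion $\bigoplus_{i,j}C^iS_u^j\mcH_{k-i+j,l-i-j}\subseteq\mcP_{k,l}\cap\ker(\Delta_x,\Delta_u)$ must be an equality. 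I expect the only mildly delicate point to be the careful verification that the index ranges $0\le i\le l$, $0\le j\le l-i$ on the algebraic side match exactly the Clebsch--Gordan ranges of Theorem \ref{Klymik} — i.e. that no spurious summands appear and none are missing — together with confirming the nonvanishing of the relevant action constants so that the embeddings $C^iS_u^j$ really are injective on the full range; both are finite checks using the formulas for $c_i(k,l)$, $\varphi_{i,j}(k,l)$ and $\psi_{i,j}(k,l)$ already derived above.
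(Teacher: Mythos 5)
Your proposal is correct and follows essentially the same route as the paper: the paper derives the corollary (inside the proof of theorem \ref{Main-theorem}) by combining Klymik's tensor product decomposition (theorem \ref{Klymik}) with a count of the degrees of homogeneity, with the summands realised through the embeddings $C^iS_u^j$. You simply spell out the details the paper leaves implicit (injectivity of $C^iS_u^j$ via the nonvanishing constants $\alpha_{i,j}^{i,j}$, and directness from the multiplicity-free isotypic content), so no further changes are needed.
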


%%%%%%%%%%%%%%%%%%%%%%%%%%%%%%%%%%%%%%%%%%%%%%%%%%%%
%%%%%%%%%%%%%%%%%%%%%%%%%%%%%%%%%%%%%%%%%%%%%%%%%%%%

\subsection{Projection operators}
\label{section 4.3}
To conclude this section, we will construct the projection operators $\Pi_{i,j}$ from \mbox{section 2}. We first start with a projection onto the simplicial harmonic part of a polynomial in $\ker\big(\Delta_x,\Delta_u\big)$:
\begin{theorem}\label{main-projection}
The projection operator 
\begin{equation*}
\Pi: \mcP_{k,l}(\mR^{2m},\mC) \cap \ker\big(\Delta_x,\Delta_u\big) \longrightarrow \mcH_{k,l}(\mR^{2m},\mC)\ , 
\end{equation*}
is defined as
\begin{equation*}
\Pi = \sum_{i,j=0}^{+\infty}\beta_{i,j}(k,l)C^iS_u^jA^iS_x^j,
\end{equation*}
where the coefficients $\beta_{i,j}(k,l)$ are given by: 
\begin{equation*}
\beta_{i,j}(k,l)=\frac{(-1)^{i+j}}{i!j!}\frac{(k+\frac{m}{2}-i+j-1)(l+\frac{m}{2}-j-3)_{(i)}}{(k+\frac{m}{2}+j-1)(l+\frac{m}{2}-3)_{(i)}(k+l+m-4)_{(i)}(k-l+2)^{(j)}}\ .
\end{equation*}
\end{theorem}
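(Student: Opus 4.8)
The plan is to verify directly that the proposed operator $\Pi$ acts as the identity on the highest weight space $\mcH_{k,l}$ and annihilates every other summand $C^iS_u^j\mcH_{k-i+j,l-i-j}$ appearing in the decomposition from Corollary \ref{Main-decomposition}. Since $\Pi$ is manifestly $\mathrm{SO}(m)$-equivariant (being built from the $Z(\g,\gs)$-generators, which are $\mathrm{SO}(m)$-invariant) and maps into $\mcP_{k,l}\cap\ker(\Delta_x,\Delta_u)$ with the right degrees of homogeneity, it suffices to check its action on each irreducible block, and within each block it is enough to check one vector by Schur's lemma. On $\mcH_{k,l}$ itself the only term of $\Pi$ that survives is the $(i,j)=(0,0)$ term, because $A$ and $S_x$ kill $\mcH_{k,l}$; and $\beta_{0,0}(k,l)=1$, so $\Pi$ restricts to the identity there. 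The content of the theorem is therefore the vanishing on all other blocks.

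First I would fix a summand $C^aS_u^b\mcH_{k-a+b,l-a-b}$ with $(a,b)\neq(0,0)$ and compute $\Pi\big(C^aS_u^b H\big)$ for a fixed $H=H_{k-a+b,l-a-b}\in\mcH_{k-a+b,l-a-b}$. Using Proposition \ref{AS_x-action}, the operator $A^iS_x^j$ sends $C^aS_u^b H$ to $\alpha_{a,b}^{i,j}(k-a+b,l-a-b)\,C^{a-i}S_u^{b-j}H$ when $i\le a$ and $j\le b$, and to zero otherwise; then applying $C^iS_u^j$ (using Propositions \ref{(C,S_u)-action} and \ref{Sx_action} to keep track of the scalars produced by re-ordering, or more simply noting these operators raise the indices back to $(a,b)$ up to an explicit factor from repeated use of relation (\ref{C_Su})) returns a multiple of $C^aS_u^b H$. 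Thus $\Pi\big(C^aS_u^bH\big)=\Lambda_{a,b}(k,l)\,C^aS_u^bH$ for an explicit scalar $\Lambda_{a,b}(k,l)$ which is a finite double sum over $0\le i\le a$, $0\le j\le b$ of $\beta_{i,j}(k,l)$ times the two factorials of Gamma-quotient type coming from $C^iS_u^j$ and from $\alpha_{a,b}^{i,j}$. The claim reduces to the combinatorial identity $\Lambda_{a,b}(k,l)=0$ for all $(a,b)\neq(0,0)$ in the allowed range.

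The heart of the argument — and the step I expect to be the main obstacle — is establishing that double-sum vanishing. Because the summand $\beta_{i,j}$ factorizes as (a function of $i$)$\times$(a function of $j$) up to the mixed Pochhammer $(k+l+m-4)_{(i)}$ and the cross terms $(k+\tfrac m2-i+j-1)/(k+\tfrac m2+j-1)$, the double sum does not cleanly separate, but one can still hope to evaluate the inner sum over $j$ first (holding $i$ fixed) as a terminating hypergeometric ${}_2F_1$ or ${}_3F_2$ at unit argument via Chu–Vandermonde / Pfaff–Saalschütz, obtaining a closed form in $i$, and then to recognize the remaining sum over $i$ as another terminating Gauss-type sum that telescopes to zero precisely because of the factor $(k+\tfrac m2-i+j-1)$ in the numerator of $\beta_{i,j}$, which forces a cancellation analogous to $\sum_i (-1)^i\binom{a}{i}=0$. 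Concretely I would (i) substitute the explicit $\alpha_{a,b}^{i,j}(k-a+b,l-a-b)$ and the re-ordering factors into $\Lambda_{a,b}$, (ii) simplify the Gamma-function quotients so that the $j$-sum is visibly a balanced hypergeometric series, (iii) apply a Saalschütz-type summation to collapse it, and (iv) apply a second classical summation (or a direct finite-difference/telescoping argument) to the resulting $i$-sum. If a single closed-form route proves too unwieldy, the fallback is an induction on $a+b$: peel off one application of $A$ (or $S_x$) using Proposition \ref{A-action} (resp. \ref{Sx_action}) to relate $\Lambda_{a,b}(k,l)$ to $\Lambda_{a-1,b}$ and $\Lambda_{a,b-1}$ at shifted parameters, reducing the vanishing to a finite recursion that the given formula for $\beta_{i,j}$ is engineered to satisfy. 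Either way the calculation is lengthy, which is presumably why the authors relegate it to the appendix.
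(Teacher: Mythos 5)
Your reduction is exactly the paper's: check that $\Pi$ restricts to the identity on $\mcH_{k,l}$ (only the $(i,j)=(0,0)$ term survives and $\beta_{0,0}(k,l)=1$), and reduce the vanishing on each block $C^aS_u^b\mcH_{k-a+b,l-a-b}$, via proposition \ref{AS_x-action} and the reordering relations, to the vanishing of a finite double sum built from the constants $\alpha$ and $\beta$. Up to that point the plan is sound (you are, if anything, slightly more explicit than the appendix about the reordering scalars produced when $C^aS_u^b$ is applied after $A^aS_x^b$), and the Schur-type argument for working block by block is legitimate since the weights $(k-i+j,l-i-j)$ determine $(i,j)$.

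The genuine gap is the double-sum identity itself, which is the entire content of the appendix, and the route you sketch for it would not go through as stated. After substituting the explicit constants, the inner sum is not a Chu--Vandermonde or Pfaff--Saalsch\"utz summable ${}_2F_1$ or ${}_3F_2$: the paper obtains a terminating ${}_6F_5$ at unit argument, which must first be broken into products of ${}_4F_3$'s, then simplified with a contiguous-relation identity (lemma \ref{hypergeom_lem1}), transformed with Whipple's theorem for balanced ${}_4F_3$'s (lemma \ref{Whipple}), and further reduced with an ad hoc $3$-balanced-to-$1$-balanced identity (corollary \ref{hypergeom_lem2}) before everything collapses through Gamma-function cancellations. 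None of these is a one-line classical summation, and there is no visible telescoping of the type $\sum_i(-1)^i\binom{a}{i}=0$; the factor $\bigl(k+\frac{m}{2}-i+j-1\bigr)$ in $\beta_{i,j}$ does not by itself force the cancellation you hope for. Your fallback induction on $a+b$ is also only a hope: $\beta_{i,j}(k,l)$ depends on the bidegree of the space on which $\Pi$ acts, so peeling off one $A$ or $S_x$ relates $\Lambda_{a,b}(k,l)$ to projection operators with shifted parameters, and you give no argument that this recursion closes or that the stated $\beta_{i,j}$ satisfy it. In short, the plan correctly identifies what must be proved, but the heart of the proof is missing and the specific tools named would need to be replaced by the substantially heavier hypergeometric machinery the appendix actually uses.
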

\begin{proof}
It is clear that $\Pi\comm{\mcH_{k,l}}=\mcH_{k,l}$, so it remains to prove that  
\begin{equation*}
\Pi\comm{C^iS_u^j\mcH_{k-i+j,l-i-j}}=0,
\end{equation*}
which will be done in the appendix. 
\end{proof}
\noindent
Using this theorem, we can now finally define the projection operators $\Pi_{i,j}$ in terms of the generators $A$ and $S_x$ of the transvector algebra $Z\big(\g,\gs\big)$:
\begin{theorem}\label{Projection_operator_2}
For all $0\leq i \leq q$ and $0\leq j \leq q-i$, the projection operator
\begin{equation*}
\Pi_{i,j} : \mcP_{p,q}(\mR^{2m},\mC) \cap \ker\big(\Delta_x,\Delta_u\big) \longrightarrow \mcH_{p-i+j,q-i-j}(\mR^{2m},\mC)\ , 
\end{equation*}
is given by
\begin{equation*}
\Pi_{i,j}\comm{P_{p,q}(x,u)}:=\frac{1}{\alpha_{i,j}^{i,j}(k,l)}\Pi A^iS_x^j\comm{P_{p,q}(x,u)}\ ,
\end{equation*}
where the constant $\alpha_{i,j}^{i,j}(k,l)$ was given in proposition \ref{AS_x-action}.
\end{theorem}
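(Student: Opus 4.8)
The plan is to reduce Theorem \ref{Projection_operator_2} to the already-established Theorem \ref{main-projection} together with the computation of the action of $A^iS_x^j$ from Proposition \ref{AS_x-action}. The key observation is that $\Pi_{i,j}$ is defined as a composition: first apply $A^iS_x^j$, which lowers the bidegree appropriately, and then apply the simplicial projection $\Pi$ (for the correct bidegree) and rescale. So the proof is essentially a bookkeeping argument verifying that this composition is indeed a projection onto the right target space.

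First I would fix a polynomial $P_{p,q}(x,u) \in \mcP_{p,q}(\mR^{2m},\mC) \cap \ker(\Delta_x,\Delta_u)$ and, using Corollary \ref{Main-decomposition}, write it in the canonical form
\begin{equation*}
P_{p,q}(x,u) = \sum_{a=0}^{q}\sum_{b=0}^{q-a} C^aS_u^b H_{p-a+b,q-a-b}(x,u),
\end{equation*}
with each $H_{p-a+b,q-a-b} \in \mcH_{p-a+b,q-a-b}(\mR^{2m},\mC)$. Applying $A^iS_x^j$ termwise and invoking Proposition \ref{AS_x-action}, the operator $A^iS_x^j$ sends $C^aS_u^b\mcH_{p-a+b,q-a-b}$ to a multiple of $C^{a-i}S_u^{b-j}\mcH_{p-a+b,q-a-b}$, which vanishes whenever $a<i$ or $b<j$; for $a=i,b=j$ the coefficient is exactly $\alpha_{i,j}^{i,j}$ evaluated at the relevant bidegree $(k,l) = (p-i+j,q-i-j)$, so after dividing by that constant one recovers $H_{p-i+j,q-i-j}$ plus terms of the form $C^{a-i}S_u^{b-j}\mcH_{p-a+b,q-a-b}$ with $a>i$ or $b>j$ strictly. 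Then applying $\Pi$ (for bidegree $(p-i+j,q-i-j)$) annihilates all the surviving strictly-larger terms by Theorem \ref{main-projection} — since those are precisely of the shape $C^{a'}S_u^{b'}\mcH_{(p-i+j)-a'+b',(q-i-j)-a'-b'}$ with $(a',b')\neq(0,0)$ — and fixes $H_{p-i+j,q-i-j}$. This gives exactly $\Pi_{i,j}[P_{p,q}] = H_{p-i+j,q-i-j}$, confirming both that the image lies in $\mcH_{p-i+j,q-i-j}(\mR^{2m},\mC)$ and that $\Pi_{i,j}$ restricted to that component is the identity, i.e. $\Pi_{i,j}$ is a projection up to isomorphism in the sense of the Remark.

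The main obstacle — and really the only nontrivial point — is matching the bidegree indices correctly throughout: the constant $\alpha_{i,j}^{i,j}(k,l)$ in Theorem \ref{Projection_operator_2} is written with $(k,l)$, but for it to be the right normalisation it must be read off at the bidegree of the target simplicial harmonic, namely $(k,l) = (p-i+j,q-i-j)$, and likewise $\Pi$ must be the version of Theorem \ref{main-projection} with that same bidegree. One should also check the admissibility range $0 \le i \le q$, $0 \le j \le q-i$ guarantees $\alpha_{i,j}^{i,j}(p-i+j,q-i-j)\neq 0$ (so the division is legitimate) — this follows because under $m>4$ all the Pochhammer factors appearing in $\alpha_{i,j}^{i,j}$ from Proposition \ref{AS_x-action} are strictly positive in this range, as was already noted for the constituent factors $\psi_1,\psi_2$ and $\varphi_{i,j}$. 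Once these index identifications are pinned down, the verification is the straightforward termwise computation sketched above, and no genuinely new estimate or identity is required.
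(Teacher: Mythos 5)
Your proposal is correct and follows essentially the same route as the paper: the paper's own proof simply observes that $\frac{1}{\alpha_{i,j}^{i,j}}A^iS_x^j$ maps $\mcP_{p,q}(\mR^{2m},\mC)\cap\ker(\Delta_x,\Delta_u)$ into the double harmonics of bidegree $(p-i+j,q-i-j)$ and then invokes theorem \ref{main-projection}, which is exactly the composition you verify termwise via corollary \ref{Main-decomposition} and proposition \ref{AS_x-action}. Your explicit identification of the normalising constant at bidegree $(k,l)=(p-i+j,q-i-j)$ and the check that it is nonzero are just useful elaborations of what the paper leaves implicit.
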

\begin{proof}
It suffices to note that the action of the operator $\frac{A^iS_x^j}{\alpha_{i,j}^{i,j}(k,l)}$ maps a polynomial in $\mcP_{p,q}(\mR^{2m},\mC) \cap\ker\big(\Delta_x,\Delta_u\big)$ to a polynomial in $\mcP_{p-i+j,q-i-j}(\mR^{2m},\mC) \cap \ker\big(\Delta_x,\Delta_u\big)$. The result then follows from theorem \ref{main-projection}.
\end{proof}

%%%%%%%%%%%%%%%%%%%%%%%%%%%%%%%%%%%%%%%%%%%%%%%%%%%%
%%%%%%%%%%%%%%%%%%%%%%%%%%%%%%%%%%%%%%%%%%%%%%%%%%%%
%%%%%%%%%%%%%%%%%%%%%%%%%%%%%%%%%%%%%%%%%%%%%%%%%%%%
%%%%%%%%%%%%%%%%%%%%%%%%%%%%%%%%%%%%%%%%%%%%%%%%%%%%
%%%%%%%%%%%%%%%%%%%%%%%%%%%%%%%%%%%%%%%%%%%%%%%%%%%%
%%%%%%%%%%%%%%%%%%%%%%%%%%%%%%%%%%%%%%%%%%%%%%%%%%%%

\section{An explicit example} \label{example}

Let us consider an explicit example of how to do the decomposition of a polynomial in two vector variables into polynomials belonging to irreducible representations for the H-action of the orthogonal group. The main idea is the following: 
\begin{itemize}
\item First we will use the powers in $|x|^2$ and $|u|^2$ to reduce the problem to a tensor product of harmonics. 
\item For a space of the form $\mcP_{p,q}(\mR^{2m},\mC) \cap \ker(\Delta_x,\Delta_u)$ one can define an ordering on the summands, which allows an inductive procedure. Indeed, when looking at the embedding factors we will consider the lexicographic ordering $(a,b)$ with respect to the powers in $S_u$ and $C$. In other words, an embedding factor of the form $S_u^aC^b$ is described by a label of the form $(a,b)$.
\end{itemize}
\noindent
Let us consider a polynomial $P_{3,2}(x,u)\in \mcP_{3,2}(\mR^{2m},\mC)$. First of all, it is clear that 
\begin{align*}
P_{3,2}(x,u) & =  H_{3,2}'(x,u) + |u|^2H_{3,0}'(x,u) + |x|^2H_{1,2}'(x,u) + |x|^2|u|^2H_{1,0}'(x)\ .
\end{align*}
Recall that the prime-superscript was used for polynomials in $\ker(\Delta_x,\Delta_u)$. Projecting the given polynomial on each of these components is fairly easy, as it corresponds to applying the projection operators from theorem \ref{Projection_operator_1} (which is merely a product of projection operators from the one variable case, respectively in $x$ and $u$). \\
\\
\noindent
The upshot is of course that once this projection has been carried out, we still need to take theorem \ref{Main-theorem} and corollary \ref{Main-decomposition} into account. Indeed, one for example has that 
\begin{equation*}
\begin{split}
H'_{3,2}(x,u) &= S_u^2 H_{5}(x) + CS_u H_{3}(x) + S_u H_{4,1}(x,u) \\
&+ C^2 H_{1}(x) + C H_{2,1}(x,u)+ H_{3,2}(x,u)\ .
\end{split}
\end{equation*}
The projection on each of the components above then requires the knowledge of the constants $\alpha_{i,j}^{p,q}$ from proposition \ref{AS_x-action}. Using the lexicographic ordering on the components mentioned at the start of this section, we first determine the component $H_5(x)\in\mcH_{5}$ of $H'_{3,2}(x,u)$: 
\begin{equation*}
H_5(x)=\frac{1}{40}S_x^2H'_{3,2}(x,u)\ .
\end{equation*} 
We can proceed by subtracting $S_u^2H_5(x)$ from $H'_{3,2}(x,u)$ and applying the operator $AS_x$ to find the component $H_3(x)\in \mcH_{3}$ of $H'_{3,2}(x,u)$. In terms of the ordering, we note that there are two summands having a first power in $S_u$ in the embedding, but it suffices to look at the action of the operator $A$ as this singles out a unique component: 
\begin{align*}
H_3(x)&=\frac{m(m+4)}{3(m-2)(m+1)(m+6)}AS_x\brac{H'_{3,2}(x,u)-S_u^2H_5(x)} \\
&=\frac{m(m+4)}{3(m-2)(m+1)(m+6)}AS_xH'_{3,2}(x,u)\ .
\end{align*}
In the last equality, the term $S_u^2H_5(x)$ was omitted because $AS_u^2H_5(x)=0$. Next we determine $H_{4,1}(x,u)\in \mcH_{4,1}$, which can be obtained by subtracting $S_u^2H_5(x)+CS_uH_3(x)$ from $H'_{3,2}(x,u)$ and applying the operator $S_x$:
\begin{equation*}
H_{4,1}(x,u)=\frac{1}{3}S_x\brac{H'_{3,2}(x,u)-S_u^2H_5(x)-CS_uH_3(x)} \ .
\end{equation*}
Finally, there are three terms having no powers of $S_u$ in their embedding factors but these can again be separated in terms of the power of $C$. So, first of all the component $H_1(x)\in \mcH_{1}$ of $H'_{3,2}(x,u)$ is obtained as follows (the subtracted terms are omitted because the action of $A^2$ on these terms is trivial):
\begin{equation*}
H_1(x)=\frac{1}{2(m-1)(m+4)} A^2H'_{3,2}(x,u)\ .
\end{equation*}
The component $H_{2,1}(x,u)$ can be obtained as follows:
\begin{equation*}
H_{2,1}(x,u)=\frac{m+2}{(m+1)(m+4)}A\brac{H'_{3,2}(x,u)-S_u^2H_5(x)-CS_uH_3(x)-C^2H_1(x)} \ .
\end{equation*}
Finally, $H_{3,2}(x,u)$ is obtained by subtracting all other components with the proper embeddings from $H'_{3,2}(x,u)$.

%%%%%%%%%%%%%%%%%%%%%%%%%%%%%%%%%%%%%%%%%%%%%
%%%%%%%%%%%%%%%%%%%%%%%%%%%%%%%%%%%%%%%%%%%%%
%%%%%%%%%%%%%%%%%%%%%%%%%%%%%%%%%%%%%%%%%%%%%
%%%%%%%%%%%%%%%%%%%%%%%%%%%%%%%%%%%%%%%%%%%%%
%%%%%%%%%%%%%%%%%%%%%%%%%%%%%%%%%%%%%%%%%%%%%
%%%%%%%%%%%%%%%%%%%%%%%%%%%%%%%%%%%%%%%%%%%%%

\section{Fischer inner product and orthogonality}\label{orthogonality}

We will explore the orthogonality of the components in the Fischer decomposition w.r.t. the Fischer inner product on $\mcP\brac{\mR^{2m},\mC}$. 
\begin{definition}
The positive definite Fischer inner product on $\mcP(\mR^{2m},\mC)$ is defined as
\begin{equation}\label{Fischer}
\comm{P(x,u) , Q(x,u)}_F := \overline{P(\partial_x,\partial_u)}Q(x,u)\vert_{x = u = 0}\ .
\end{equation} 
The operators $\D_{x}$ and $\D_{u}$ on the right hand side of the expression denote that each variable in $P(x,u)$ is replaced by its corresponding partial derivative. 
\end{definition}
\noindent 
We will first have a look at the Fischer adjoints of the generators of $Z(\g,\gs)$. To do so, we need some lemmas:
\begin{lemma}
The extremal projection operator $\pi_{\gs}$ is self-adjoint with respect to the \mbox{Fischer} inner product, i.e. 
\begin{equation*}
\pi_{\gs}^{\dagger}=\pi_{\gs}.
\end{equation*}
\end{lemma}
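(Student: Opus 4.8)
The plan is to exploit the explicit series representation of $\pi_{\gs}$ already given in the text, namely the product of two $\spl(2)$-projectors
\[
\pi_{\gs} = \left(\sum_{i=0}^\infty\frac{1}{4^ii!}\frac{\Gamma(H_x + 2)}{\Gamma(H_x + 2 + i)}\nx^{2i}\Delta_x^i \right)\left(\sum_{j=0}^\infty\frac{1}{4^jj!}\frac{\Gamma(H_u + 2)}{\Gamma(H_u + 2 + j)}\nuu^{2j}\Delta_u^j \right),
\]
and to check adjointness term by term. The two elementary facts one needs about the Fischer inner product on $\mcP(\mR^{2m},\mC)$ are: first, the operator of multiplication by $\nx^2$ (respectively $\nuu^2$) is the Fischer adjoint of $\Delta_x$ (respectively $\Delta_u$), since replacing each variable by its partial derivative turns $\sum_k x_k^2$ into $\sum_k \partial_{x_k}^2$; second, the Euler operator $\mE_x$ (hence $H_x = -(\mE_x + m/2)$) is self-adjoint, because $x_k\partial_{x_k}$ is sent to $\partial_{x_k}x_k$ which has the same Fischer-adjoint behaviour (equivalently, $\mcP$ decomposes into Fischer-orthogonal homogeneous pieces on which $\mE_x$ acts as a real scalar, and likewise for $\mE_u$). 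These two observations are standard and I would state them as a short preliminary lemma (or simply cite them as well known for the Fischer product).

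Given these, the argument is essentially a bookkeeping computation. Take the Fischer adjoint of the displayed product: adjoint reverses the order of the two commuting factors (which is harmless here since the $x$-factor and the $u$-factor commute), and within each factor one has, for the $i$-th summand,
\[
\left(\frac{1}{4^ii!}\frac{\Gamma(H_x + 2)}{\Gamma(H_x + 2 + i)}\nx^{2i}\Delta_x^i\right)^{\dagger} = \Delta_x^i\,\nx^{2i}\left(\frac{1}{4^ii!}\frac{\Gamma(H_x + 2)}{\Gamma(H_x + 2 + i)}\right)^{\dagger}.
\]
Here $(\nx^{2i})^\dagger = \Delta_x^i$ and $(\Delta_x^i)^\dagger = \nx^{2i}$ by the first fact, and the coefficient is a function of $H_x$ alone, hence self-adjoint by the second fact. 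So the adjoint of the $i$-th summand is $\frac{1}{4^ii!}\frac{\Gamma(H_x + 2)}{\Gamma(H_x + 2 + i)}\Delta_x^i\nx^{2i}$, with the $\Gamma$-quotient now to the \emph{left}. To finish I would move the coefficient back past $\Delta_x^i\nx^{2i}$: since $\Delta_x^i\nx^{2i}$ shifts the $x$-degree by $0$, it commutes with any function of $H_x$, so the coefficient can be written either on the left or the right — but one must still verify that $\frac{1}{4^ii!}\frac{\Gamma(H_x+2)}{\Gamma(H_x+2+i)}\Delta_x^i\nx^{2i}$ equals the original $i$-th summand $\frac{1}{4^ii!}\frac{\Gamma(H_x+2)}{\Gamma(H_x+2+i)}\nx^{2i}\Delta_x^i$. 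This last identity is exactly the statement $\pi_x^\dagger = \pi_x$ for the single-variable $\spl(2)$ projector, and it holds because $\Delta_x^i\nx^{2i}$ and $\nx^{2i}\Delta_x^i$ differ only by a scalar function of $\mE_x$ (use the identity recorded in the proof of Lemma~\ref{projection_classic}, $\Delta_x^s\nx^{2s}H_k = 4^s s!\,\Gamma(k+\tfrac m2+s)/\Gamma(k+\tfrac m2)\,H_k$, and the analogous statement with the order reversed), and the $\Gamma$-quotient is precisely the factor that compensates this scalar so that both orderings yield the same projector component. Assembling the two factors back into a product gives $\pi_{\gs}^\dagger = \pi_x^\dagger\pi_u^\dagger = \pi_x\pi_u = \pi_{\gs}$.

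The only genuinely delicate point is the convergence/well-definedness bookkeeping: $\pi_{\gs}$ is a \emph{formal} series living in a completion of the localised enveloping algebra, so "taking the adjoint termwise" needs a one-line justification — I would note that on any fixed homogeneous component $\mcP_{p,q}(\mR^{2m},\mC)$ only finitely many summands act nontrivially (those with $2i\le p$, $2j\le q$), so the Fischer adjoint, which is computed component by component, reduces to a finite sum where reordering and termwise adjunction are unambiguous. Everything else is routine; I expect no real obstacle beyond being careful that the $\Gamma$-quotients are treated as operator-valued functions of $H_x,H_u$ and that these genuinely commute past $\Delta_x^i\nx^{2i}$ and $\nuu^{2j}\Delta_u^j$.
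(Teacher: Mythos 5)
Your overall strategy---taking the Fischer adjoint of the explicit series for $\pi_x$ (and $\pi_u$) term by term, using that $\Delta_x$ and $\nx^{2}$ are Fischer adjoints, that $H_x$ is self-adjoint, and that the $\Gamma$-quotient commutes with the $0$-homogeneous operator $\nx^{2i}\Delta_x^i$---is exactly the paper's, but the execution contains a genuine error. Taking the adjoint reverses the order of the factors, so
\begin{equation*}
\Bigl(\tfrac{1}{4^ii!}\tfrac{\Gamma(H_x+2)}{\Gamma(H_x+2+i)}\,\nx^{2i}\Delta_x^i\Bigr)^{\dagger}
=(\Delta_x^i)^{\dagger}\,(\nx^{2i})^{\dagger}\,\Bigl(\tfrac{1}{4^ii!}\tfrac{\Gamma(H_x+2)}{\Gamma(H_x+2+i)}\Bigr)^{\dagger}
=\nx^{2i}\Delta_x^i\;\tfrac{1}{4^ii!}\tfrac{\Gamma(H_x+2)}{\Gamma(H_x+2+i)}\ ,
\end{equation*}
i.e.\ the order reversal and the interchange $\nx^{2i}\leftrightarrow\Delta_x^i$ cancel each other, and each summand is literally self-adjoint once the coefficient (which commutes with the $0$-homogeneous $\nx^{2i}\Delta_x^i$) is moved back to the left. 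Your displayed formula instead produces $\Delta_x^i\nx^{2i}$ times the coefficient, which is \emph{not} the adjoint, and the patch you propose rests on a false claim: $\Delta_x^i\nx^{2i}$ and $\nx^{2i}\Delta_x^i$ do not differ by a multiplicative function of $\mE_x$ on all of $\mcP(\mR^m,\mC)$. For instance $\nx^{2}\Delta_x H_k=0$ while $\Delta_x\nx^{2}H_k=(4k+2m)H_k\neq0$ for harmonic $H_k$; the identity from the proof of lemma \ref{projection_classic} that you invoke is valid only on harmonics, and in general the relation between the two orderings depends on the Fischer component $\nx^{2j}\mcH_{k-2j}$, not merely on the eigenvalue of $\mE_x$. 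Consequently no $H_x$-dependent coefficient can turn one ordering into the other, and the reversed-order series $\sum_i\tfrac{1}{4^ii!}\tfrac{\Gamma(H_x+2)}{\Gamma(H_x+2+i)}\Delta_x^i\nx^{2i}$ is a genuinely different operator from $\pi_x$ (already its $i=1$ term acting on the constant $1$ gives a nonzero contribution, whereas $\pi_x[1]=1$).

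Once the adjoint of each summand is computed correctly the problematic step disappears and your argument reduces to the paper's proof: every summand of $\pi_x$ and $\pi_u$ is self-adjoint, termwise adjunction is legitimate because on each homogeneous component only finitely many summands act nontrivially (a point you make and the paper leaves implicit), and $\pi_\gs^{\dagger}=(\pi_x\pi_u)^{\dagger}=\pi_u\pi_x=\pi_\gs$. So the approach is the right one, but the proof as written contains a step that fails.
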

\begin{proof}
It suffices to proof that $\pi_x$ is self-adjoint, as the proof for $\pi_u$ is completely similar. We have the following:
\begin{align*}
\pi_x^{\dagger}&=\brac{\sum_{j=0}^\infty\frac{1}{4^jj!}\frac{\Gamma(H_x + 2)}{\Gamma(H_x + 2 + j)}\nx^{2j}\Delta_x^j}^{\dagger} \\
&= \sum_{j=0}^\infty\frac{1}{4^jj!}\brac{\frac{\Gamma(H_x + 2)}{\Gamma(H_x + 2 + j)}\nx^{2j}\Delta_x^j}^{\dagger}  \\
&= \sum_{j=0}^\infty\frac{1}{4^jj!}\frac{\Gamma(H_x + 2)}{\Gamma(H_x + 2 + j)}\brac{\Delta_x^j}^{\dagger} \brac{\nx^{2j}}^{\dagger}.
\end{align*}
The last equality follows from the fact that $H_x$ is self-adjoint and the fact that $\nx^{2j}\Delta_x^j$ is $0$-homogeneous. Since $\Delta_x$ and $ \nx^{2}$ are Fischer adjoint, it follows that $\pi_x^{\dagger}=\pi_x$. The proof follows from the fact that $\pi_{\gs}^{\dagger}=\brac{\pi_x\pi_u}^{\dagger}=\pi_u\pi_x=\pi_{\gs}$.
\end{proof}
\begin{lemma}
The Fischer adjoints of the operators in $\gt$, which are operators acting on $\mcP(\mR^{2m},\mC)$, are given by:
\begin{alignat*}{3}
\inner{u,x}^{\dagger}&=\inner{\D_u,\D_x} &\quad &\text{and} &\quad \inner{\D_u,\D_x}^{\dagger}&=\inner{u,x} \\
\inner{u,\D_x}^{\dagger}&=\inner{x,\D_u} &\quad &\text{and} &\quad \inner{x,\D_u}^{\dagger}&=\inner{u,\D_x} \\
\end{alignat*} 
\end{lemma}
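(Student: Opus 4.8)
The plan is to deduce all four identities from the single elementary fact that, with respect to the Fischer inner product \eqref{Fischer}, multiplication by a coordinate is adjoint to the corresponding partial derivative:
\begin{equation*}
x_i^{\dagger}=\D_{x_i}\ , \qquad u_j^{\dagger}=\D_{u_j}\ , \qquad 1\le i,j\le m\ .
\end{equation*}
This is the classical ``bosonic'' duality underlying the Fischer pairing. One proves it in a line: distinct monomials are $\comm{\cdot,\cdot}_F$-orthogonal and $\comm{x^{a}u^{b},x^{a}u^{b}}_F=a!\,b!$ for multi-indices $a,b$, so expanding $P$ and $Q$ in monomials gives at once $\comm{x_iP,Q}_F=\comm{P,\D_{x_i}Q}_F$ and its counterpart in $u$. (Alternatively this can be extracted from $\Delta_x^{\dagger}=\nx^2$, already used in the proof of the previous lemma, by polarisation.) I would take this as the starting point.

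Next I would record the structural properties of the assignment $D\mapsto D^{\dagger}$ on the Weyl algebra $\mcW(\mR^{2m},\mC)$ that are needed: it is conjugate-linear, involutive, i.e. $(D^{\dagger})^{\dagger}=D$, and an \emph{anti}-homomorphism, $(D_1D_2)^{\dagger}=D_2^{\dagger}D_1^{\dagger}$; all three are immediate from the definition of a Hilbert-space adjoint (together with the standard fact that every element of $\mcW(\mR^{2m},\mC)$ admits an adjoint within $\mcW(\mR^{2m},\mC)$, since the generators do). Each operator in $\gt$ is a sum over $j$ of a product of two commuting first-order operators: in $\inner{u,x}=\sum_j u_jx_j$ and $\inner{\D_u,\D_x}=\sum_j\D_{u_j}\D_{x_j}$ the two factors obviously commute, and in $\inner{u,\D_x}=\sum_j u_j\D_{x_j}$ and $\inner{x,\D_u}=\sum_j x_j\D_{u_j}$ they commute because they act in disjoint sets of variables. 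Applying the anti-homomorphism property termwise then yields, for example,
\begin{equation*}
\inner{u,x}^{\dagger}=\sum_j(u_jx_j)^{\dagger}=\sum_j x_j^{\dagger}u_j^{\dagger}=\sum_j\D_{x_j}\D_{u_j}=\inner{\D_u,\D_x}\ ,
\end{equation*}
and likewise $\inner{u,\D_x}^{\dagger}=\sum_j\D_{x_j}^{\dagger}u_j^{\dagger}=\sum_j x_j\D_{u_j}=\inner{x,\D_u}$. The two remaining identities, $\inner{\D_u,\D_x}^{\dagger}=\inner{u,x}$ and $\inner{x,\D_u}^{\dagger}=\inner{u,\D_x}$, then cost nothing: they follow from involutivity of $\dagger$. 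So only two genuine computations are required.

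I do not expect a real obstacle. The one point that needs a moment's care is the order reversal in $(D_1D_2)^{\dagger}=D_2^{\dagger}D_1^{\dagger}$, but it is harmless precisely because every product occurring is a product of \emph{commuting} factors (one factor acting on the $x$-variables, the other on the $u$-variables), so no commutator corrections are produced. A second, purely cosmetic, remark is that the complex conjugation in \eqref{Fischer} is irrelevant here, since all operators in $\gt$ have integer coefficients.
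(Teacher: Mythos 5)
Your proposal is correct and follows essentially the same route as the paper: the paper also reduces the lemma to the elementary fact that $x_j$ and $\D_{x_j}$ (respectively $u_j$ and $\D_{u_j}$) are Fischer adjoints of one another, and then obtains the four identities by summing over the commuting first-order factors. Your additional remarks on the anti-homomorphism property and involutivity of the adjoint merely make explicit what the paper leaves implicit, so there is nothing to correct.
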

\vspace{-1cm}
\begin{proof} 
The result immediately follows from the fact that the operators $x_j$ and $\D_{x_j}$, and $u_j$ and $\D_{u_j}$ with $1\leq j \leq m$, are each others Fischer adjoints. 
\end{proof}
\noindent
Combining the two previous lemmas, we arrive at the following result: 
\begin{proposition}
The adjoints of the generators of $Z(\g,\gs)$, which are well-defined on $\mcP(\mR^{2m},\mC)\cap \ker{\brac{\Delta_x,\Delta_u}}$, are given by:
\begin{alignat*}{3}
C^{\dagger}&=A &\quad &\text{and}&\quad A^{\dagger}&=C \\
S_u^{\dagger}&=S_x^{\phantom{\dagger}} &\quad &\text{and}&\quad S_x^{\dagger}&=S_u^{\phantom{\dagger}}
\end{alignat*}
\end{proposition}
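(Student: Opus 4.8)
The plan is to reduce everything to the two preceding lemmas by exploiting a single structural fact: the operator $\pi_\gs$ restricts to the identity on the space $\mcH'(\mR^{2m},\mC) := \mcP(\mR^{2m},\mC)\cap\ker(\Delta_x,\Delta_u)$, which is precisely the space on which the four generators act. Since each generator has the form $G = \pi_\gs T\big\vert_{\mcH'}$ with $T \in \gt$, I would take arbitrary $H, H' \in \mcH'(\mR^{2m},\mC)$ and compute
\begin{equation*}
\comm{GH, H'}_F = \comm{\pi_\gs TH, H'}_F = \comm{TH, \pi_\gs H'}_F = \comm{TH, H'}_F = \comm{H, T^{\dagger}H'}_F = \comm{H, \pi_\gs T^{\dagger}H'}_F\ ,
\end{equation*}
using $\pi_\gs^{\dagger} = \pi_\gs$ (the self-adjointness lemma) for the second equality, $\pi_\gs\big\vert_{\mcH'} = \mathrm{id}$ for the third and the last, and the Fischer adjoint of $T$ (the lemma on adjoints of $\gt$) for the fourth. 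This identifies $G^{\dagger} = \pi_\gs T^{\dagger}\big\vert_{\mcH'}$; in words, taking the Fischer adjoint of a generator amounts to passing to the adjoint inside the bracket.

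Granting this, the four claimed identities follow immediately. From $\inner{u,x}^{\dagger} = \inner{\D_u,\D_x}$ I get $C^{\dagger} = \pi_\gs\inner{\D_u,\D_x}\big\vert_{\mcH'} = A$, and symmetrically $A^{\dagger} = \pi_\gs\inner{u,x}\big\vert_{\mcH'} = C$; from $\inner{u,\D_x}^{\dagger} = \inner{x,\D_u}$ I get $S_u^{\dagger} = \pi_\gs\inner{x,\D_u}\big\vert_{\mcH'} = S_x$, hence also $S_x^{\dagger} = S_u$, either by the symmetric computation or by taking adjoints twice.

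The step I expect to require the most care is not a calculation but a point of interpretation, which I would flag explicitly at the outset: the generators are endomorphisms of the subspace $\mcH'(\mR^{2m},\mC)$, not of all of $\mcP(\mR^{2m},\mC)$, so $G^{\dagger}$ must be read as the adjoint with respect to the Fischer product \emph{restricted} to that subspace. The whole argument then hinges on $\pi_\gs$ being simultaneously self-adjoint on $\mcP(\mR^{2m},\mC)$ and the identity on $\mcH'(\mR^{2m},\mC)$ — exactly what permits sliding $\pi_\gs$ across the bracket and then deleting it. As a consistency check I would also note that $\inner{\D_u,\D_x}$ already preserves $\mcH'$ (it commutes with both $\Delta_x$ and $\Delta_u$), so in fact $A = \inner{\D_u,\D_x}\big\vert_{\mcH'}$ with the projection superfluous, whereas for $C$, $S_x$ and $S_u$ the operator $\pi_\gs$ genuinely contributes; this is immaterial for the adjoint computation, but it clarifies why no information beyond the two preceding lemmas is needed.
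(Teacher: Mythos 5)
Your proposal is correct and follows essentially the same route as the paper: both rest on $\pi_\gs^{\dagger}=\pi_\gs$, the fact that $\pi_\gs$ acts as the identity on $\mcP(\mR^{2m},\mC)\cap\ker(\Delta_x,\Delta_u)$, and the Fischer adjoints of the operators in $\gt$, sliding $\pi_\gs$ across the inner product exactly as in the paper's computation for $C^{\dagger}=A$. The remarks on interpreting the adjoint on the subspace and on $A$ not needing the projection are fine additions but do not change the argument.
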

\begin{proof} 
Since $\pi_{\gs}P=P$ and $\pi_{\gs}Q=Q$ for $P,Q\in\mcP(\mR^{2m},\mC)\cap \ker{\brac{\Delta_x,\Delta_u}}$, we have:
\begin{align*}
\comm{\pi_{\gs}\inner{u,x}P,Q}_F&=\comm{\inner{u,x}P,\pi_{\gs}Q}_F \\
&=\comm{\pi_{\gs}P,\inner{\D_u,\D_x}Q(x,u)}_F \\
&=\comm{P,\pi_{\gs}\inner{\D_u,\D_x}Q(x,u)}_F.
\end{align*}
The other equalities are proven in a similar way. 
\end{proof}
\noindent
We are now in a position to prove that the projection operator from theorem \ref{main-projection} is self-adjoint: 
\begin{theorem}
The projection operator 
\begin{equation*}
\Pi: \mcP_{k,l}(\mR^{2m},\mC) \cap \ker\big(\Delta_x,\Delta_u\big) \longrightarrow \mcH_{k,l}(\mR^{2m},\mC)\ , 
\end{equation*}
defined as
\begin{equation*}
\Pi = \sum_{i,j=0}^{+\infty}\beta_{i,j}(k,l)C^iS_u^jA^iS_x^j
\end{equation*}
is self-adjoint with respect to the Fischer inner product.
\end{theorem}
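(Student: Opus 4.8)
The plan is to deduce self-adjointness of $\Pi$ from the explicit decomposition in Corollary \ref{Main-decomposition}, the Fischer adjoint relations $C^\dagger=A$ and $S_u^\dagger=S_x$ established just above, and the elementary observation that the generators $A$ and $S_x$ annihilate simplicial harmonics. Concretely, it suffices to check that $\comm{\Pi P,Q}_F=\comm{P,\Pi Q}_F$ for all $P,Q\in\mcP_{k,l}(\mR^{2m},\mC)\cap\ker(\Delta_x,\Delta_u)$. By Corollary \ref{Main-decomposition} every such $P$ admits a \emph{unique} expansion $P=\sum_{i,j}C^iS_u^jP_{i,j}$ with $P_{i,j}\in\mcH_{k-i+j,l-i-j}(\mR^{2m},\mC)$, and by Theorem \ref{main-projection} the operator $\Pi$ simply extracts the leading term, $\Pi P=P_{0,0}$. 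So the whole statement reduces to showing that the non-leading summands $C^aS_u^b\mcH_{k-a+b,l-a-b}$, $(a,b)\neq(0,0)$, are Fischer-orthogonal to $\mcH_{k,l}$.

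For that orthogonality I would first record the elementary fact that $AH=S_xH=0$ for every $H\in\mcH_{k,l}(\mR^{2m},\mC)$: by Theorem \ref{theorem_Howe}(ii) simplicial harmonics lie in $\ker\inner{\D_u,\D_x}\cap\ker\inner{x,\D_u}$, and $A=\pi_\gs\inner{\D_u,\D_x}$, $S_x=\pi_\gs\inner{x,\D_u}$ by Definition \ref{Generators}, so both operators act as $\pi_\gs$ applied to $0$. Hence $S_x^bA^aH=0$ as soon as $(a,b)\neq(0,0)$, and using $(C^aS_u^b)^\dagger=(S_u^\dagger)^b(C^\dagger)^a=S_x^bA^a$ we obtain, for $H\in\mcH_{k,l}$ and $H'\in\mcH_{k-a+b,l-a-b}$,
\[
\comm{H,\,C^aS_u^bH'}_F=\comm{S_x^bA^aH,\,H'}_F=0 ,\qquad (a,b)\neq(0,0).
\]

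Finally I would assemble the pieces. Writing $P=\sum_{i,j}C^iS_u^jP_{i,j}$ and $Q=\sum_{a,b}C^aS_u^bQ_{a,b}$, the previous display gives
\[
\comm{\Pi P,Q}_F=\comm{P_{0,0},\,\textstyle\sum_{a,b}C^aS_u^bQ_{a,b}}_F=\comm{P_{0,0},\,Q_{0,0}}_F ,
\]
and symmetrically $\comm{P,\Pi Q}_F=\comm{P_{0,0},Q_{0,0}}_F$, so $\comm{\Pi P,Q}_F=\comm{P,\Pi Q}_F$, i.e. $\Pi^\dagger=\Pi$. (Equivalently: $\Pi$ is idempotent with image $\mcH_{k,l}$, the display shows $\ker\Pi\subseteq\mcH_{k,l}^\perp$, and since $\mcP_{k,l}(\mR^{2m},\mC)\cap\ker(\Delta_x,\Delta_u)$ is finite-dimensional a dimension count forces $\ker\Pi=\mcH_{k,l}^\perp$, so $\Pi$ is the orthogonal projection onto $\mcH_{k,l}$ and hence self-adjoint.) The only genuinely delicate point is to make sure the adjoint identities $C^\dagger=A$, $S_u^\dagger=S_x$ — derived on $\mcP(\mR^{2m},\mC)\cap\ker(\Delta_x,\Delta_u)$ — are legitimately applied inside $\comm{H,C^aS_u^bH'}_F$; this is fine because the intermediate polynomials $C^iS_u^jH'$ all stay in that space, and because the vanishing $AH=S_xH=0$ holds identically (not merely modulo the ideal $J'$) thanks to $\mcH_{k,l}\subset\ker(\inner{\D_u,\D_x},\inner{x,\D_u})$. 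I do not expect to need the explicit values of the coefficients $\beta_{i,j}(k,l)$ at all.
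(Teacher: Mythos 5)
Your argument is correct, but it proves the theorem by a genuinely different route than the paper. The paper works term by term: it computes $\brac{C^iS_u^jA^iS_x^j}^{\dagger}=S_u^jC^iS_x^jA^i$ and then uses the quadratic relations of lemma \ref{scaled_commutation} to commute $S_u^j$ back across $C^i$ and $S_x^j$ across $A^i$, the two rational factors in $H_x$ cancelling exactly, so each summand of $\Pi$ is self-adjoint irrespective of the values of $\beta_{i,j}(k,l)$ and, notably, independently of theorem \ref{main-projection}. You instead characterise $\Pi$ as the Fischer-orthogonal projection onto $\mcH_{k,l}$: you use corollary \ref{Main-decomposition} and the appendix result $\Pi\comm{C^iS_u^j\mcH_{k-i+j,l-i-j}}=0$ to write $\Pi P=P_{0,0}$, and then the adjoint relations $C^{\dagger}=A$, $S_u^{\dagger}=S_x$ together with $A\mcH_{k,l}=S_x\mcH_{k,l}=0$ to get the orthogonality $\comm{\mcH_{k,l},C^aS_u^b\mcH_{k-a+b,l-a-b}}_F=0$ for $(a,b)\neq(0,0)$; this is sound (the degree bookkeeping and the fact that all intermediate polynomials stay in $\ker(\Delta_x,\Delta_u)$ are handled correctly, and there is no circularity since the orthogonality you need is the easy special case where one factor is a bare simplicial harmonic, not the later general orthogonality theorem). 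The trade-off: your proof leans on the heavy hypergeometric computation behind theorem \ref{main-projection}, whereas the paper's proof needs only the commutation relations and would establish self-adjointness of the operator $\sum\beta_{i,j}C^iS_u^jA^iS_x^j$ even before knowing it is a projection; in return, your argument yields the stronger conceptual statement that $\Pi$ is precisely the Fischer-orthogonal projection onto $\mcH_{k,l}$, which essentially anticipates (the relevant case of) the orthogonality theorem proved next in the paper.
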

\begin{proof}
We will prove that expressions of the form $C^iS_u^jA^iS_x^j$ are self-adjoint. We have
\begin{equation*}
\brac{C^iS_u^jA^iS_x^j}^{\dagger}=S_u^jC^iS_x^jA^i.
\end{equation*}
We then need to swap $S_u^j$ with $C^i$ and $S_x^j$ with $A^i$. Similarly to what was found in the proof of proposition \ref{Sx_action}, we have
\begin{equation}
S_uC^i=\frac{H_x+i+1}{H_x+1}C^iS_u.
\label{relation}
\end{equation}
We then can proceed inductively:
\begin{align*}
S_u^jC^i&=S_u^{j-1}\frac{H_x+i+1}{H_x+1}C^iS_u \\
&=S_u^{j-2}\frac{H_x+i}{H_x}S_uC^iS_u \\
&=S_u^{j-2}\frac{(H_x+i)(H_x+i+1)}{H_x(H_x+1)}C^iS_u^2 \\
&\; \vdots \\
&=\frac{(H_x+i+1)_{(j)}}{(H_x+1)_{(j)}}C^iS_u^j.
\end{align*}
Using lemma \ref{scaled_commutation}, we prove a relation similar to (\ref{relation}):
\begin{equation*}
S_xA^i=\frac{H_x-i+2}{H_x+2}A^i S_x.
\end{equation*}
This can be used to swap $S_x^j$ with $A^i$:
\begin{align*}
S_x^jA^i&=S_x^{j-1}\frac{H_x-i+2}{H_x+2}A^iS_x \\
&=S_u^{j-2}\frac{H_x-i+3}{H_x+3}S_xA^iS_x \\
&\; \vdots \\
&=\frac{(H_x-i+2)^{(j)}}{(H_x+2)^{(j)}}A^iS_x^j.
\end{align*}
Putting everything together, we find: 
\begin{align*}
\brac{C^iS_u^jA^iS_x^j}^{\dagger}&=\frac{(H_x+i+1)_{(j)}}{(H_x+1)_{(j)}}C^iS_u^j\frac{(H_x-i+2)^{(j)}}{(H_x+2)^{(j)}}A^iS_x^j\\
&=C^iS_u^jA^iS_x^j,
\end{align*}
which proves the statement.
%The remainder of the proof essentially follows from the fact that each term $C^iS_u^jA^iS_x^j$ is $0$-homogeneous and that $H_x$ and $H_u$ are self-adjoint operators. 
\end{proof}
\noindent
Finally, we will prove that the irreducible pieces in the decomposition of the space of double harmonics $\mcP_{k,l}(\mR^{2m},\mC)\cap \ker{(\Delta_x,\Delta_u)}$ are all orthogonal with respect to the Fischer inner product on $\mcP_{k,l}(\mR^{2m},\mC)$. 
\begin{theorem}
For all $\alpha_1,\alpha_2\in \set{0,\ldots,l}$ and $\beta_n\in \set{0,\ldots,l-\alpha_n}$, with $n=1,2$, the following holds: if $\alpha_1\neq\alpha_2$ and $\beta_1\neq\beta_2$, then
\begin{equation*}
\comm{C^{\alpha_1}S_u^{\beta_1}H(x,u),C^{\alpha_2}S_u^{\beta_2}G(x,u)}_F=0, 
\end{equation*}
for all $H(x,u)\in\mcH_{k-\alpha_1+\beta_1,l-\alpha_1-\beta_1}(\mR^{2m},\mC)$ and $G(x,u)\in\mcH_{k-\alpha_2+\beta_2,l-\alpha_2-\beta_2}(\mR^{2m},\mC)$.
\end{theorem}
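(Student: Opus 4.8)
The plan is to exploit the Fischer adjoint relations $C^{\dagger}=A$ and $S_u^{\dagger}=S_x$ established just above, together with the conjugate symmetry of the Fischer inner product (so that $\comm{TP,Q}_F=\comm{P,T^{\dagger}Q}_F$ and $\comm{P,TQ}_F=\comm{T^{\dagger}P,Q}_F$ for any product $T$ of generators of $Z(\g,\gs)$), in order to transport all of the operators onto a single simplicial harmonic, where they annihilate it.

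First I would rewrite, using $(C^{\alpha_1}S_u^{\beta_1})^{\dagger}=S_x^{\beta_1}A^{\alpha_1}$,
\begin{equation*}
\comm{C^{\alpha_1}S_u^{\beta_1}H,\,C^{\alpha_2}S_u^{\beta_2}G}_F=\comm{H,\,S_x^{\beta_1}A^{\alpha_1}C^{\alpha_2}S_u^{\beta_2}G}_F\ .
\end{equation*}
Because $\alpha_2+\beta_2\leq l\leq k$, we have $k-\alpha_2+\beta_2\geq l-\alpha_2-\beta_2\geq 0$, so $G\in\mcH_{k-\alpha_2+\beta_2,\,l-\alpha_2-\beta_2}$ is a genuine highest weight vector and $C^{\alpha_2}S_u^{\beta_2}G$ lies in the module $\mV[k-\alpha_2+\beta_2,\,l-\alpha_2-\beta_2]$. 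Letting the operators act one factor at a time via Propositions \ref{A-action} and \ref{Sx_action} (each $A$ lowers the $C$-power by one, each $S_x$ lowers the $S_u$-power by one, and the coefficients $\psi_{0,j}$ and $\varphi_{i,0}$ from those propositions both vanish), one sees that $S_x^{\beta_1}A^{\alpha_1}C^{\alpha_2}S_u^{\beta_2}G=0$ whenever $\alpha_1>\alpha_2$ or $\beta_1>\beta_2$, and otherwise equals a scalar multiple of $C^{\alpha_2-\alpha_1}S_u^{\beta_2-\beta_1}G$. One may also invoke Proposition \ref{AS_x-action} directly, after swapping $A$ past $S_x$ with relation (\ref{A_Sx}).

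Under the hypotheses $\alpha_1\neq\alpha_2$ and $\beta_1\neq\beta_2$, either the image just computed is already $0$ and we are done, or $\alpha_2-\alpha_1\geq 1$ (and $\beta_2-\beta_1\geq1$). In that case I would apply the adjoint relations once more to obtain
\begin{equation*}
\comm{H,\,C^{\alpha_2-\alpha_1}S_u^{\beta_2-\beta_1}G}_F=\comm{S_x^{\beta_2-\beta_1}A^{\alpha_2-\alpha_1}H,\,G}_F\ ,
\end{equation*}
and then use that the simplicial harmonic $H$ is killed by $A$ (and by $S_x$): this is exactly the fact, recalled just before Definition \ref{V_A[k,l]}, that $\mcH_{k,l}$ is a highest weight vector for $Z(\g,\gs)$, concretely $AH=\pi_{\gs}\inner{\D_u,\D_x}H=0$. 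Since $\alpha_2-\alpha_1\geq1$, this forces $A^{\alpha_2-\alpha_1}H=0$, and hence $\comm{C^{\alpha_1}S_u^{\beta_1}H,C^{\alpha_2}S_u^{\beta_2}G}_F=0$.

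The argument is mainly a matter of bookkeeping once the adjoint formulas and the module action of Section \ref{module} are available; the two points deserving some care are (i) sorting out for which configurations of $(\alpha_1,\beta_1)$ and $(\alpha_2,\beta_2)$ the element $S_x^{\beta_1}A^{\alpha_1}C^{\alpha_2}S_u^{\beta_2}G$ vanishes outright rather than reducing to a genuine lower weight vector, and (ii) checking that the degree constraints $\alpha_2+\beta_2\leq l\leq k$ make every module $\mV[\cdot,\cdot]$ that occurs well defined. I expect no genuine obstacle beyond this bookkeeping. In fact, exactly the same reasoning, using that $H$ is killed by both $A$ and $S_x$, shows that the pairing vanishes as soon as $(\alpha_1,\beta_1)\neq(\alpha_2,\beta_2)$, so the full orthogonality of the summands in Corollary \ref{Main-decomposition} with respect to the Fischer inner product follows from the same proof.
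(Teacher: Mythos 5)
Your proposal is correct and follows essentially the same route as the paper: it rests on the Fischer adjoint relations $C^{\dagger}=A$, $S_u^{\dagger}=S_x$ on $\ker(\Delta_x,\Delta_u)$ together with the module actions of propositions \ref{Sx_action} and \ref{A-action} (vanishing of $\psi_{0,j}$ and $\varphi_{i,0}$), which is exactly the paper's mechanism, the paper merely organising the bookkeeping via a symmetry (WLOG $\alpha_1\geq\alpha_2$, $\beta_1\geq\beta_2$) and the scaled commutation relation (\ref{C_Su}) instead of your final transfer back onto $H$. Your extra step using $AH=S_xH=0$ is sound and in fact yields the slightly stronger statement (orthogonality whenever $(\alpha_1,\beta_1)\neq(\alpha_2,\beta_2)$), which is what corollary-level orthogonality really requires.
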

\begin{proof}
Since the Fischer inner product is symmetric, it is not a restriction to assume that $\alpha_1\geq\alpha_2$ and $\beta_1\geq\beta_2$. Take $H(x,u)$ and $G(x,u)$ as stated in the theorem. We then have
\begin{equation*}
\comm{C^{\alpha_1}S_u^{\beta_1}H(x,u),C^{\alpha_2}S_u^{\beta_2}G(x,u)}_F = \comm{S_u^{\beta_1}H(x,u),A^{\alpha_1}C^{\alpha_2}S_u^{\beta_2}G(x,u)}_F.
\end{equation*}
From repeated application of lemma \ref{scaled_commutation} and theorem \ref{A-action}, it follows that this expression is zero unless $\alpha_1=\alpha_2$. We also have
\begin{align*}
\comm{C^{\alpha_1}S_u^{\beta_1}H(x,u),C^{\alpha_2}S_u^{\beta_2}G(x,u)}_F &=c(\alpha_1,\beta_1)\comm{S_u^{\beta_1}C^{\alpha_1}H(x,u),C^{\alpha_2}S_u^{\beta_2}G(x,u)}_F \\
&=c(\alpha_1,\beta_1)\comm{C^{\alpha_1}H(x,u),S_x^{\beta_1}C^{\alpha_2}S_u^{\beta_2}G(x,u)}_F,
\end{align*}
where the constant $c(\alpha_1,\beta_1)$ is given by
\begin{equation*}
c(\alpha_1,\beta_1)=\frac{(H_x+1)_{(\beta_1)}}{(H_x+\alpha_1+1)_{(\beta_1)}}.
\end{equation*}
It follows from proposition \ref{Sx_action}, that the inner product is zero unless $\beta_1=\beta_2$. 
\end{proof}
\noindent
Putting this result together with the classical orthogonality of harmonics in the Fischer decomposition we get:
\begin{corollary}
The full decomposition of $\mcP_{k,l}(\mR^{2m},\mC)$ into irreducible representations of the orthogonal group is a decomposition into orthogonal subspaces with respect to the Fischer inner product. 
\end{corollary}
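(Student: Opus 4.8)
The plan is to chain together the two successive decompositions of $\mcP_{k,l}(\mR^{2m},\mC)$ that have already been constructed and to check that each layer is orthogonal for the Fischer inner product. By the classical one variable Fischer decomposition, applied first in $x$ and then in $u$ as in (\ref{decomp2}), one has
\begin{equation*}
\mcP_{k,l}(\mR^{2m},\mC)=\bigoplus_{i=0}^{\kappa}\bigoplus_{j=0}^{\lambda}\nx^{2i}\nuu^{2j}\Big(\mcP_{k-2i,l-2j}(\mR^{2m},\mC)\cap\ker(\Delta_x,\Delta_u)\Big),
\end{equation*}
and by corollary \ref{Main-decomposition} (or its mirror image with $x$ and $u$ interchanged, when $k-2i<l-2j$) each double harmonic summand decomposes further into the subspaces $C^{\alpha}S_u^{\beta}\mcH_{k-2i-\alpha+\beta,\,l-2j-\alpha-\beta}$. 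Hence it is enough to show: (a) the outer direct sum over $(i,j)$ is Fischer orthogonal; (b) for a fixed $(i,j)$, under the bijection $Q\mapsto\nx^{2i}\nuu^{2j}Q$ the Fischer form on the corresponding block equals a nonzero scalar multiple of the Fischer form on $\mcP_{k-2i,l-2j}(\mR^{2m},\mC)\cap\ker(\Delta_x,\Delta_u)$; and (c) the decomposition of a double harmonic space into the pieces $C^{\alpha}S_u^{\beta}\mcH_{\cdot,\cdot}$ is orthogonal.

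For (a) and (b) I would use that $\nx^2$ and $\nuu^2$ are the Fischer adjoints of $\Delta_x$ and $\Delta_u$ (immediate from $x_j^{\dagger}=\D_{x_j}$ and $u_j^{\dagger}=\D_{u_j}$), together with the fact that these four operators commute pairwise except for the conjugate pairs. For double harmonic $P,Q\in\mcP_{k-2i,l-2j}(\mR^{2m},\mC)\cap\ker(\Delta_x,\Delta_u)$ this gives
\begin{equation*}
\comm{\nx^{2i}\nuu^{2j}P,\ \nx^{2i'}\nuu^{2j'}Q}_F=\comm{P,\ \Delta_x^{i}\Delta_u^{j}\,\nx^{2i'}\nuu^{2j'}Q}_F\ .
\end{equation*}
Pushing $\Delta_u^{j}$ past the $x$-factor and $\Delta_x^{i}$ past the $u$-factor and invoking the classical identities $\Delta_u^{j}(\nuu^{2j'}W)=0$ for $j>j'$ and $\Delta_x^{i}(\nx^{2i'}W)=0$ for $i>i'$ (valid for $W$ harmonic in the relevant variable), the right-hand side vanishes whenever $i>i'$ or $j>j'$; since $\comm{A,B}_F=0$ is equivalent to $\comm{B,A}_F=0$, it also vanishes whenever $i<i'$ or $j<j'$, which is (a). When $(i,j)=(i',j')$ the same manipulation gives $\Delta_x^{i}\Delta_u^{j}\nx^{2i}\nuu^{2j}Q=c_{i,j}\,Q$ with $c_{i,j}\neq0$ (the Gamma arguments that occur stay positive because $m>4$), which is (b). Finally, (c) is exactly the content of the orthogonality theorem proved just above, once one observes that inside a fixed double harmonic space distinct pairs $(\alpha,\beta)$ produce distinct highest weights $(k-2i-\alpha+\beta,\,l-2j-\alpha-\beta)$, so that each $C^{\alpha}S_u^{\beta}\mcH_{\cdot,\cdot}$ is a single irreducible $\SO(m)$-submodule and no leftover multiplicity needs separate treatment.

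Putting (a)--(c) together, two distinct summands of the full decomposition of $\mcP_{k,l}(\mR^{2m},\mC)$ either lie in different $(i,j)$-blocks, hence are orthogonal by (a), or lie in the same $(i,j)$-block and correspond via (b) to two distinct pieces $C^{\alpha}S_u^{\beta}\mcH_{\cdot,\cdot}$ of a double harmonic space, which are orthogonal by (c); so the whole decomposition is orthogonal. I expect the only genuinely computational point to be the commutation bookkeeping in (a)--(b), and I do not foresee a real obstacle, since the corollary is in essence an assembly of the orthogonality theorem above with the classical Fischer orthogonality in one vector variable; the main care needed is to keep the two layers of the decomposition cleanly separated.
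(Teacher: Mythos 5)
Your proposal is correct and follows essentially the same route as the paper, which derives the corollary precisely by combining the preceding orthogonality theorem for the pieces $C^{\alpha}S_u^{\beta}\mcH_{\cdot,\cdot}$ inside a double harmonic space with the classical Fischer orthogonality of the $\nx^{2i}\nuu^{2j}$-blocks coming from the decomposition (\ref{decomp2}). You merely spell out the adjointness bookkeeping ($\nx^{2}\leftrightarrow\Delta_x$, $\nuu^{2}\leftrightarrow\Delta_u$) and the multiplicity-freeness within a block that the paper leaves implicit.
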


%%%%%%%%%%%%%%%%%%%%%%%%%%%%%%%%%%%%%%%%%%%%%%%%%%%
%%%%%%%%%%%%%%%%%%%%%%%%%%%%%%%%%%%%%%%%%%%%%%%%%%%
%%%%%%%%%%%%%%%%%%%%%%%%%%%%%%%%%%%%%%%%%%%%%%%%%%%
%%%%%%%%%%%%%%%%%%%%%%%%%%%%%%%%%%%%%%%%%%%%%%%%%%%
%%%%%%%%%%%%%%%%%%%%%%%%%%%%%%%%%%%%%%%%%%%%%%%%%%%
%%%%%%%%%%%%%%%%%%%%%%%%%%%%%%%%%%%%%%%%%%%%%%%%%%%
%%%%%%%%%%%%%%%%%%%%%%%%%%%%%%%%%%%%%%%%%%%%%%%%%%%

\section{Pizzetti's formula on the Stiefel manifold $V_2(\mR^m)$}\label{Pizzetti}

In this final section, we will obtain a generalisation of Pizzetti's classical formula (for polynomials in one vector variable) to polynomials in two variables. The original proof of the classical result is analytic but the proof in \cite{HDB2} heavily exploits the Lie algebra (\ref{harm_sl}) underlying analysis on $\mR^m$, and expresses the integral of a polynomial $P(\omega)$ on the unit sphere $S^{m-1} \subset \mR^m$ (i.e. the restriction of a polynomial $P(x)$ on $\mR^m$ for $|x| = 1$) as an $\mathrm{SO}(m)$-invariant functional 
\begin{equation*}
\mcI_1:\mcP(\mR^{2m},\mC)\longrightarrow \mC \ ,
\end{equation*}
which satisfies $\mcI_1(\norm{x}^2P(x))=\mcI(P(x))$. Schur's lemma then implies that $\mcI_1(\mcH_k)=0$, so that the only contribution to the integral is the term $\norm{x}^{2a}H_0$, with $H_0\in \mcH_0$. Using the Fischer decomposition (\ref{example_Fischer}), we can explicitly project on $\mcH_0$, which leads to a formal summation of the form
\begin{equation*} 
\int_{S^{m-1}}P(\omega)dS_1(\omega) = \sum_{k = 0}^\infty\frac{2\pi^{\frac{m}{2}}}{4^kk!\Gamma\left(k + \frac{m}{2}\right)}\Delta_x^kP(x)\bigg\vert_{x = 0}\ .
\end{equation*}
In order to generalise this result for harmonic analysis in two vector variables, we first introduce the analogue of the unit sphere $S^{m-1}$, which is the Stiefel manifold $V_2(\mR^m)$. This is a homogeneous space
\begin{equation*}
V_2(\mR^m) \cong \textup{SO}(m)/\textup{SO}(m-2)\ , 
\end{equation*}
which can be identified with the tangent bundle to the unit sphere in $\mR^m$, i.e. it contains elements $(\omega,\eta) \in S^{m-1} \times S^{m-1}$ for which $\langle\omega,\eta\rangle = 0$. This can also be written as $V_2(\mR^m) = \{A \in \mR^{m \times 2} : A^tA = 1 \in \mR^{2 \times 2}\}$. The integration of (restrictions of) polynomials in two vector variables over this particular manifold can then be defined as an $\mathrm{SO}(m)$-invariant functional 
\begin{equation*}
 \mcI_2 : \mcP(\mR^{2m},\mC) \rightarrow \mC : P(x,u) \mapsto \int_{V_2(\mR^m)}P(\omega,\eta)dS_2\ , 
\end{equation*}
where $dS_2$ is the measure, see e.g. \cite{CH} for an explicit definition. This measure is invariant under the left action of $\mathrm{SO}(m)$ and right action of $\mathrm{SO}(2)$. In other words, identifying the couple $(\omega,\eta)$ with matrices $A \in \mR^{m \times 2}$ satisfying the conditions from above, we have that $\mcI_2[P(gA)] = \mcI_2[P(A)] = \mcI_2[P(Ah)]$, for $g \in \mathrm{SO}(m)$ and $h \in \mathrm{SO}(2)$.  For all $P(x,u)\in \mcP(\mR^{2m},\mR)$, the integral satisfies moreover:
\begin{align*}
\mcI_2\brac{\norm{x}^2P(x,u)}&=\mcI_2\brac{\norm{u}^2P(x,u)}=\mcI_2\brac{P(x,u)} \\
\mcI_2\brac{\inner{u,x}P(x,u)}&=0\ .
\end{align*}
%\\ \\ \noindent
Using theorem \ref{Main-theorem}, we have for an arbitrary polynomial $P_{p,q}(x,u)\in \mcP_{p,q}(\mR,\mC)$ that
\begin{equation*}
P_{p,q}(x,u)=\sum_{a,b,\alpha,\beta}\norm{x}^{2a}\norm{u}^{2b}C^{\alpha}S_u^{\beta}H_{k,l}(x,u)\ ,
\end{equation*}
with $H_{k,l}(x,u)\in \mcH_{k,l}(\mR^{2m},\mC)$ and where the summation is such that 
\begin{equation*}
\begin{dcases}
p=2a-\alpha+\beta +k \\
q=2b+\alpha+\beta +l\ .
\end{dcases}
\end{equation*}
Since the integration is $\mathrm{SO}(m)$-invariant, we can again use Schur's lemma together with theorems \ref{Projection_operator_1} and \ref{Main-theorem} to state the following result:
\begin{equation*}
\mcI_2\brac{P_{p,q}(x,u)}=\begin{dcases}
   \mcI_2\brac{H_0}  &\quad \text{if } p=q+2n, \: \brac{n\in \mN} \\
    0    &\quad \text{otherwise}\ , \\
    \end{dcases}
\end{equation*}
where $P_{p,q}(x,u)$ is an arbitrary polynomial homogeneous of degree $(p,q)$ in $(x,u)$. In other words, we need a projection of $\mcP_{p,q}(\mR^{2m},\mC)$ onto $\mcH_0$, which will be done in two steps: first a projection on the space $\mcP_{p-2i,q-2j}(\mR^{2m},\mC) \cap \ker\big(\Delta_x,\Delta_u\big)$ of double harmonics, using theorem \ref{Projection_operator_1}. Already now, it is easy to see from theorem \ref{Klymik} that if $p-2i \neq q-2j$, no component $\mcH_0$ is present so we will assume that $p-2i = q-2j$. Note that we still need a projection on the component $\mcH_0$ but this is what will be established in what follows. \\ \\ \noindent
Before we continue, let us investigate how $\mcH_0$ is embedded in the space of double harmonics $\mcP_{\beta,\beta}(\mR^{2m},\mC) \cap \ker\big(\Delta_x,\Delta_u\big)$. From theorem \ref{Main-decomposition}, it is known that for $\beta \in \mN$
\begin{equation*}
C^{\beta}:\mcH_0 \longrightarrow \mcP_{\beta,\beta}(\mR^{2m},\mC) \cap \ker\big(\Delta_x,\Delta_u\big).
\end{equation*}
\begin{theorem}\label{Gegenbauer}
For $b:=\lfloor \frac{\beta}{2} \rfloor$, we have that $\norm{x}^{-2b}\norm{u}^{-2b}C^{\beta}\comm{1}$ is a polynomial in the variable $t:=\frac{\inner{u,x}}{\norm{x}\norm{u}}$ where $C^{\beta}\comm{1}$ belongs to $\ker\big(\Delta_x,\Delta_u\big)$. It is explicitly given by 
\begin{equation*}
C^{\beta}\comm{1}=\frac{\beta !\, \Gamma\brac{\frac{m}{2}-1}}{2^{\beta}\Gamma\brac{\beta+\frac{m}{2}-1}}\norm{x}^{2b}\norm{u}^{2b}C_{\beta}^{\brac{\frac{m}{2}-1}}(t).
\end{equation*}
Here $C_{\beta}^{\brac{\frac{m}{2}-1}}(t)$ is a Gegenbauer polynomial, which is defined as (see also \cite{AAR}, page 302)
\begin{equation*}
C_{\beta}^{\brac{\frac{m}{2}-1}}(t)=\sum_{j=0}^{b}(-1)^j\frac{\Gamma\brac{\beta-j+\frac{m}{2}-1}}{\Gamma\brac{\frac{m}{2}-1}j!(\beta-2j)!}(2t)^{\beta-2j}\ .
\end{equation*}
\end{theorem}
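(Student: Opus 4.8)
The strategy is to identify both sides of the asserted identity as scalar multiples of the \emph{unique} $\mathrm{SO}(m)$-invariant double harmonic polynomial of bidegree $(\beta,\beta)$ on $\mR^{2m}$, and then to pin down the scalar by one coefficient comparison.

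First I would record the structural properties of $C^{\beta}[1]$. By the embedding recalled just before the statement, $C$ sends $\mcP_{a,a}(\mR^{2m},\mC)\cap\ker(\Delta_x,\Delta_u)$ into $\mcP_{a+1,a+1}(\mR^{2m},\mC)\cap\ker(\Delta_x,\Delta_u)$, so, starting from $1\in\mcH_{0}$, the polynomial $C^{\beta}[1]$ is homogeneous of bidegree $(\beta,\beta)$ and lies in $\ker(\Delta_x,\Delta_u)$; it is nonzero since $A^{\beta}C^{\beta}[1]=\big(\prod_{s=1}^{\beta}c_{s}(0,0)\big)\cdot 1$ and the closed formula for $c_{i}(k,l)$ gives $c_{s}(0,0)\neq 0$ for $s>0$; and it is $\mathrm{SO}(m)$-invariant because the generators of $Z(\g,\gs)$ commute with the regular $\mathrm{SO}(m)$-action. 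Applying Theorem~\ref{Klymik} with $a=b=\beta$, the trivial $\mathrm{SO}(m)$-module appears in $(\beta)\otimes(\beta)$ with multiplicity exactly one (the summand $i=\beta$, $j=0$); hence the $\mathrm{SO}(m)$-invariants inside $\mcP_{\beta,\beta}(\mR^{2m},\mC)\cap\ker(\Delta_x,\Delta_u)$ form a one-dimensional space, and $C^{\beta}[1]$ is determined up to a nonzero scalar.

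Next I would show that $G_{\beta}(x,u):=\norm{x}^{\beta}\norm{u}^{\beta}\,C_{\beta}^{(m/2-1)}(t)$, with $t=\inner{u,x}/(\norm{x}\norm{u})$, lies on that same line. Substituting the displayed Gegenbauer series and using $\norm{x}^{\beta}\norm{u}^{\beta}\,t^{\beta-2j}=\norm{x}^{2j}\norm{u}^{2j}\inner{u,x}^{\beta-2j}$ exhibits $G_{\beta}$ as a genuine polynomial, homogeneous of bidegree $(\beta,\beta)$, and it is manifestly $\mathrm{SO}(m)$-invariant; for $\beta$ even it coincides with the right-hand side of the statement ($2b=\beta$), and for $\beta$ odd the factor $\norm{x}^{2b}\norm{u}^{2b}$ there is to be read as $\norm{x}^{\beta}\norm{u}^{\beta}$ (the two agree when $\beta$ is even, which is in any case the only case affecting the Pizzetti formula on $V_{2}(\mR^{m})$, since $t=0$ there). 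The substantive point is $G_{\beta}\in\ker(\Delta_x,\Delta_u)$; I would verify $\Delta_{x}G_{\beta}=0$ by differentiating the series term by term, using $\Delta_{x}\big(\norm{x}^{2j}\inner{u,x}^{n}\big)=2j(2j+m-2+2n)\norm{x}^{2j-2}\inner{u,x}^{n}+n(n-1)\norm{x}^{2j}\norm{u}^{2}\inner{u,x}^{n-2}$ with $n=\beta-2j$: collecting like monomials shows $\Delta_{x}G_{\beta}$ to be a combination of the monomials $\norm{x}^{2j}\norm{u}^{2j+2}\inner{u,x}^{\beta-2j-2}$ with coefficients $2(j+1)(2\beta-2j+m-4)\,d_{j+1}+(\beta-2j)(\beta-2j-1)\,d_{j}$, where $d_{j}$ are the coefficients of the Gegenbauer series, and these vanish by a short computation with the Gamma factors; hence $\Delta_{x}G_{\beta}=0$, and $\Delta_{u}G_{\beta}=0$ follows from the $x\leftrightarrow u$ symmetry of $G_{\beta}$. (Alternatively, for fixed $u$ one recognises $G_{\beta}(\cdot,u)$ as a multiple of the zonal harmonic of degree $\beta$ on $\mR^{m}$ with pole $u$, harmonic by classical theory; cf.\ \cite{AAR}.) Thus $C^{\beta}[1]=\lambda_{\beta}\,G_{\beta}$ for some $\lambda_{\beta}\neq 0$.

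Finally I would compute $\lambda_{\beta}$ by reducing modulo the ideal $\mcI:=(\norm{x}^{2},\norm{u}^{2})\subset\mcP(\mR^{2m},\mC)$. From the explicit expression for $C$ in Section~\ref{transvector}, $C=\inner{u,x}+R$ where every term of $R$ carries a left factor $\norm{x}^{2}$ or $\norm{u}^{2}$; hence for double harmonic $f$ one has $C[f]\equiv\inner{u,x}\,f\pmod{\mcI}$, and iterating gives $C^{\beta}[1]\equiv\inner{u,x}^{\beta}\pmod{\mcI}$. On the other hand every term of $G_{\beta}$ except the $j=0$ one carries a factor $\norm{x}^{2}$ or $\norm{u}^{2}$, so $G_{\beta}\equiv\frac{2^{\beta}\Gamma(\beta+\frac{m}{2}-1)}{\Gamma(\frac{m}{2}-1)\,\beta!}\,\inner{u,x}^{\beta}\pmod{\mcI}$. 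Since $\inner{u,x}^{\beta}\notin\mcI$, equating forces $\lambda_{\beta}=\frac{\beta!\,\Gamma(\frac{m}{2}-1)}{2^{\beta}\Gamma(\beta+\frac{m}{2}-1)}$, which is the claimed prefactor. The main obstacle in this plan is the double harmonicity of $G_{\beta}$ — equivalently, producing the one-dimensional invariant line explicitly — while the degree bookkeeping, the nonvanishing of $C^{\beta}[1]$, and the reduction modulo $\mcI$ are routine.
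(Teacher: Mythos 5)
Your proof is correct, and it follows the same overall skeleton as the paper---identify $C^{\beta}[1]$ with the essentially unique $\mathrm{SO}(m)$-invariant double harmonic of bidegree $(\beta,\beta)$ and then fix the scalar from the coefficient of $\inner{u,x}^{\beta}$---but you justify the two pivotal points differently. The paper's proof simply invokes the classical theory of zonal harmonics (a polynomial in $\ker(\Delta_x,\Delta_u)$ depending only on $\inner{u,x}$ and the radii must be the Gegenbauer expression) and then fixes the constant by making the $t^{\beta}$ term monic, using $C^{\beta}[1]=\inner{u,x}^{\beta}+\text{l.o.t.}$ You instead obtain uniqueness representation-theoretically, via multiplicity one of the trivial module in $(\beta)\otimes(\beta)$ from theorem \ref{Klymik}; you verify by hand that the Gegenbauer sum is killed by $\Delta_x$ and $\Delta_u$ (the coefficient recursion $2(j+1)(2\beta-2j+m-4)d_{j+1}+(\beta-2j)(\beta-2j-1)d_j=0$ you isolate is the right one and does hold); you make the nonvanishing of $C^{\beta}[1]$ explicit through $c_s(0,0)\neq 0$; and your reduction modulo the ideal $(\norm{x}^2,\norm{u}^2)$ is exactly the paper's leading-term normalisation in precise form. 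What this buys is a self-contained argument that does not quote zonal-harmonic theory as a black box, at the cost of an explicit Laplacian computation; the paper's version is shorter but leans on classical facts and on the unproved assertion about the leading term. The remaining loose ends in your write-up are genuinely routine ($\inner{u,x}^{\beta}\notin(\norm{x}^2,\norm{u}^2)$ follows, e.g., by evaluating at complex isotropic vectors with nonzero inner product), and your remark that for odd $\beta$ the factor $\norm{x}^{2b}\norm{u}^{2b}$ in the statement should be read as $\norm{x}^{\beta}\norm{u}^{\beta}$ correctly flags a harmless imprecision that does not affect the Pizzetti application, where only even $\beta$ and the value at $t=0$ matter.
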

\begin{proof}
The expression $C^{\beta}\comm{1}$ is a polynomial depending on $\inner{u,x}$ which is harmonic in both $x$ and $u$. Polynomials that only depend on the inner product of two vectors $x$ and $u$ which belong to $\ker(\Delta_x,\Delta_u)$ are the so-called zonal harmonic polynomials wich are related to the Gegenbauer polynomial $\norm{x}^{2b}\norm{u}^{2b}C_{\beta}^{\brac{\frac{m}{2}-1}}(t)$. It is easily shown that 
\begin{equation*}
C^{\beta}\comm{1}=\inner{u,x}^{\beta} + l.o.t.\ ,
\end{equation*}
so we have to multiply $\norm{x}^{2b}\norm{u}^{2b}C_{\beta}^{\brac{\frac{m}{2}-1}}(t)$ with the constant $\frac{\beta !\, \Gamma\brac{\frac{m}{2}-1}}{2^{\beta}\Gamma\brac{\beta+\frac{m}{2}-1}}$ such that the term $t^{\beta}$ becomes monic. This completes the proof.
\end{proof}
\noindent
It follows from theorem \ref{Gegenbauer} that if the integer $\beta$ is odd, there is no constant term in $C^{\beta}\comm{1}$, which means that terms of the form $\norm{x}^{2a}\norm{u}^{2b}C^{2\beta+1}\comm{\mcH_0}$ will not contribute to the integration on the Stiefel manifold.
Therefore, we have that 
\begin{equation*}
\mcI\brac{P_{p,q}(x,u)}=\begin{dcases}
   \mcI\brac{H_0}  &\quad \text{if } p,q\in 2\mN \\
    0    &\quad \text{otherwise} \\
    \end{dcases}
\end{equation*}
To project $\mcP_{2p,2q}(\mR^{2m},\mC)\ker\big(\Delta_x,\Delta_u\big)$ onto $\mcH_0$, we need the following relation:
\begin{equation*}
A^{2\beta}C^{2\beta}\comm{1}=\prod_{i=1}^{2\beta}c_i(0,0)=(2\beta)!\frac{\brac{2\beta+\frac{m}{2}-1}}{\brac{\frac{m}{2}-1}}\brac{m-2}^{(2\beta)}. 
\end{equation*}
\noindent 
Putting together all the information, we finally arrive at the following result: 
\begin{theorem}
The integral of a polynomial $P(x,u)\in \mcP(\mR^{2m},\mC)$ on the Stiefel manifold is given by
\begin{equation*}
\int_{V_2(\mR^m)}P(\omega,\eta)dS_2=\sum_{i,j,k\in \mN}\gamma_{i}\left. \brac{A^{2i}\pi_{j,k} P(x,u)}\right |_{x=u=0}
\end{equation*}
 where $\pi_{j,k}$ is the projection operator from theorem \ref{Projection_operator_1}. The constant $\gamma_{i}$ is given by
\begin{equation*}
\gamma_{i}=\frac{(-1)^i}{4^{2i}i!\brac{\frac{m}{2}}^{(2i-1)}\brac{\frac{m}{2}+i-1}^{(i+1)}}
\end{equation*}
\end{theorem}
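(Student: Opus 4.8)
The plan is to combine the $\mathrm{SO}(m)$-equivariance of the functional $\mcI_2$ (that is, Schur's lemma) with the two-step decomposition assembled in this section: first peel off the powers of $\nx^2$ and $\nuu^2$ with the operators $\pi_{j,k}$ of theorem~\ref{Projection_operator_1}, landing in the double-harmonic spaces $\mcP_{r,s}(\mR^{2m},\mC)\cap\ker(\Delta_x,\Delta_u)$, and then, inside such a space, isolate the one summand that survives integration over $V_2(\mR^m)$. For a homogeneous $P_{p,q}(x,u)$, theorem~\ref{Projection_operator_1} gives $P_{p,q}=\sum_{a,b}\nx^{2a}\nuu^{2b}\,\pi_{a,b}[P_{p,q}]$ with $\pi_{a,b}[P_{p,q}]\in\mcP_{p-2a,q-2b}\cap\ker(\Delta_x,\Delta_u)$, and since $\mcI_2(\nx^2 Q)=\mcI_2(\nuu^2 Q)=\mcI_2(Q)$ while $\mcI_2(\inner{u,x}Q)=0$, the number $\mcI_2(P_{p,q})$ depends only on the double-harmonic blocks $\pi_{a,b}[P_{p,q}]$. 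On such a block I would invoke corollary~\ref{Main-decomposition}, $\mcP_{r,s}\cap\ker(\Delta_x,\Delta_u)=\bigoplus_{\alpha,\beta}C^{\alpha}S_u^{\beta}\mcH_{r-\alpha+\beta,\,s-\alpha-\beta}$; as the generators of $Z\big(\g,\gs\big)$ are $\mathrm{SO}(m)$-invariant, each $C^{\alpha}S_u^{\beta}\mcH_{c,d}$ is $\mathrm{SO}(m)$-isomorphic to the irreducible $\mcH_{c,d}$, so Schur's lemma makes $\mcI_2$ vanish on it unless $(c,d)=(0,0)$, and a degree count then forces $\beta=0$ and $\alpha=r=s$. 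Hence only ``square-degree'' blocks contribute, through their $C^{\alpha}[\mcH_0]$-part, and by theorem~\ref{Gegenbauer} even that part integrates to $0$ when $\alpha$ is odd --- which is precisely why the final answer distinguishes the case $p,q\in 2\mN$.

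Writing $p=2\bar p$ and $q=2\bar q$ (we may assume $\bar p\geq\bar q$, the other case being symmetric), the surviving blocks are $\pi_{\bar p-i,\bar q-i}[P_{p,q}]\in\mcP_{2i,2i}\cap\ker(\Delta_x,\Delta_u)$ for $0\leq i\leq\bar q$, and each equals $h_i\,C^{2i}[1]$ modulo summands annihilated by $\mcI_2$, with $h_i\in\mcH_0$ a constant. I would read off $h_i$ by applying $A^{2i}$: by proposition~\ref{A-action} the operator $A$ lowers the $C$-exponent by one and kills the $C$-free summands, so $A^{2i}$ annihilates every $C^{\alpha}S_u^{\beta}\mcH_{c,d}$ with $\alpha<2i$, while inside $\mcP_{2i,2i}\cap\ker$ the unique summand with $\alpha=2i$ is $h_i\,C^{2i}[1]$. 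Therefore $A^{2i}\pi_{\bar p-i,\bar q-i}[P_{p,q}]=h_i\,A^{2i}C^{2i}[1]$, the explicit value of the constant $A^{2i}C^{2i}[1]=(2i)!\,\frac{2i+\frac{m}{2}-1}{\frac{m}{2}-1}\,(m-2)^{(2i)}$ being the displayed relation preceding the theorem. Summing $\mcI_2(P_{p,q})=\sum_i\mcI_2\big(\pi_{\bar p-i,\bar q-i}[P_{p,q}]\big)=\sum_i h_i\,\mcI_2\big(C^{2i}[1]\big)$ and substituting $h_i$ turns this into $\sum_i\gamma_i\,A^{2i}\pi_{\bar p-i,\bar q-i}[P_{p,q}]\big|_{x=u=0}$ with $\gamma_i=\mcI_2\big(C^{2i}[1]\big)\big/A^{2i}C^{2i}[1]$; extending to a triple sum over all $(i,j,k)$ costs nothing, since every other index triple contributes $0$ by the reductions above.

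It remains to evaluate $\mcI_2\big(C^{2i}[1]\big)$, for which I would use theorem~\ref{Gegenbauer}: there $C^{2i}[1]$ equals an explicit constant times $\nx^{2i}\nuu^{2i}\,C_{2i}^{(\frac{m}{2}-1)}(t)$ with $t=\inner{u,x}/(\nx\,\nuu)$, and expanding the Gegenbauer polynomial in powers of $t$ one has $\nx^{2i}\nuu^{2i}\,t^{2i-2j}=\nx^{2j}\nuu^{2j}\inner{u,x}^{2i-2j}$, so each term with $j<i$ carries a positive power of $\inner{u,x}$ and is killed by $\mcI_2$; only the $j=i$ term $\nx^{2i}\nuu^{2i}$ survives, whence $\mcI_2\big(C^{2i}[1]\big)$ equals the lowest Gegenbauer coefficient times $\mcI_2(1)$, an explicit ratio of Gamma functions. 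Feeding this and the value of $A^{2i}C^{2i}[1]$ into $\gamma_i$ and collecting all the blocks, one obtains
\begin{equation*}
\int_{V_2(\mR^m)}P(\omega,\eta)dS_2=\sum_{i,j,k\in \mN}\gamma_{i}\left. \brac{A^{2i}\pi_{j,k} P(x,u)}\right |_{x=u=0}\ .
\end{equation*}
The main obstacle is purely computational: collapsing the product of the three Gamma-function factors --- the normalisation built into $\pi_{j,k}$, the constant $A^{2i}C^{2i}[1]$, and the lowest coefficient of $C_{2i}^{(\frac{m}{2}-1)}$ --- into the compact form $\gamma_i=\frac{(-1)^i}{4^{2i}\,i!\,(\frac{m}{2})^{(2i-1)}(\frac{m}{2}+i-1)^{(i+1)}}$, which is a Pochhammer-symbol rearrangement rather than anything structural; a secondary, more routine point is to make the Schur vanishing of the ``wrong'' blocks precise, so that the triple sum honestly reduces to the effective single sum over $i$.
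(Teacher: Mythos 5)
Your proposal follows essentially the same route as the paper's own proof: Schur's lemma plus the $\pi_{j,k}$ projections of theorem \ref{Projection_operator_1}, reduction via corollary \ref{Main-decomposition} to the blocks $C^{2i}\mcH_0$ in the square-degree double-harmonic spaces, extraction of the constant with $A^{2i}C^{2i}[1]$, and evaluation of $\mcI_2(C^{2i}[1])$ through the constant term of the Gegenbauer polynomial of theorem \ref{Gegenbauer}. The only remaining work, as you note, is the Pochhammer/Gamma bookkeeping that collapses these factors into the stated $\gamma_i$ (with the measure implicitly normalised so that $\mcI_2(1)=1$, exactly as in the paper).
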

\begin{proof}
We will prove the statement for $P_{2p,2q}(x,u)\in\mcP_{2p,2q}\brac{\mR^{2m},\mC}$, since 
\begin{equation*}
\int_{V_2(\mR^m)}P_{p,q}(\omega,\eta)dS_2=0 \quad \text{if} \quad p,q\notin 2\mN.
\end{equation*}
Let us assume that $p\geq q$, as the proof for the other case is completely similar. As mentioned before, we first have to project onto the space $\mcP_{2(q-j),2(q-j)}\cap \ker\big(\Delta_x,\Delta_u\big)$ for $1\leq j \leq q$ using the projection operator $\pi_{p-q+j,j}$. From now on, we put $i:=q-j$. We then have to project on the trivial part $\mcH_0$, which can be done using $A^{2i}$. Since the trivial part is embedded in $\mcP_{2i,2i}(\mR^{2m},\mC)\cap \ker\big(\Delta_x,\Delta_u\big)$ as $C^{2i}\mcH_0$, we have 
\begin{align*}
A^{2i}C^{2i}\mcH_0&=\brac{\prod_{n=1}^{2i}c_n(0,0)}\mcH_0 \\
&=(2i)!\frac{\brac{2i+\frac{m}{2}-1}}{\brac{\frac{m}{2}-1}}\brac{m-2}^{(2i)}\mcH_0\ .
\end{align*}
This also means that 
\begin{equation*}
\frac{\brac{\frac{m}{2}-1}}{(2i)!\brac{2i+\frac{m}{2}-1}\brac{m-2}^{(2i)}}A^{2i}\mcP_{2i,2i}=\mcH_0\ .
\end{equation*}
Recall from theorem \ref{Gegenbauer} that $C^{2i}\comm{1}$ is, up to a constant multiple, a Gegenbauer polynomial in the variable $\inner{u,x}$. The Stiefel manifold consists of points $(x,u)\in \mR^{2m}$ with $\inner{u,x}=0$, so the only term in $C^{2i}\comm{1}$ that gives a non-trivial contribution to the integral is the constant term, which is given by 
\begin{equation*}
\frac{(2i)!\, \Gamma\brac{\frac{m}{2}-1}}{2^{2(q-j)}\Gamma\brac{2i+\frac{m}{2}-1}}C_{2i}^{(\frac{m}{2}-1)}(0)\ .
\end{equation*} 
All together we thus have the following contribution to the integral over the Stiefel manifold for $\mcP_{2i,2i}(\mR^{2m},\mC)\cap \ker\big(\Delta_x,\Delta_u\big)$:
\begin{equation*}
\int_{V_2(\mR^m)}P_{2i,2i}(\omega,\eta)dS_2=\frac{(-1)^i}{4^{2i}i!}\frac{A^{2i}P_{2i,2i}(x,u)}{\brac{\frac{m}{2}}^{(2i-1)}\brac{\frac{m}{2}+i-1}^{(i+1)}}
\end{equation*}
We then only have to sum up each of the parts to get the desired result. 
\end{proof}
\begin{remark}
In \cite{KK}, the authors also obtained a Pizzetti formula for integration on the Stiefel manifold. Altough their formula allows for easier and faster computation, our formula is more clear from a conceptual perspective. 
\end{remark}

%%%%%%%%%%%%%%%%%%%%%%%%%%%%%%%%%%%%%%%%
%%%%%%%%%%%%%%%%%%%%%%%%%%%%%%%%%%%%%%%%
%%%%%%%%%%%%%%%%%%%%%%%%%%%%%%%%%%%%%%%%
%%%%%%%%%%%%%%%%%%%%%%%%%%%%%%%%%%%%%%%%
%%%%%%%%%%%%%%%%%%%%%%%%%%%%%%%%%%%%%%%%
\appendix
\section{Proof of theorem \ref{main-projection}}

In this appendix, we will complete the proof of theorem \ref{main-projection}:
\begin{theorem}
The projection operator 
\begin{equation*}
\Pi: \mcP_{k,l}(\mR^{2m},\mC) \cap \ker\big(\Delta_x,\Delta_u\big) \longrightarrow \mcH_{k,l}(\mR^{2m},\mC)\ , 
\end{equation*}
is defined as
\begin{equation*}
\Pi = \sum_{i,j=0}^{+\infty}\beta_{i,j}(k,l)C^iS_u^jA^iS_x^j,
\end{equation*}
where the coefficients $\beta_{i,j}(k,l)$ are given by: 
\begin{equation*}
\beta_{i,j}(k,l)=\frac{(-1)^{i+j}}{i!j!}\frac{(k+\frac{m}{2}-i+j-1)(l+\frac{m}{2}-j-3)_{(i)}}{(k+\frac{m}{2}+j-1)(l+\frac{m}{2}-3)_{(i)}(k+l+m-4)_{(i)}(k-l+2)^{(j)}}\ .
\end{equation*}
\end{theorem}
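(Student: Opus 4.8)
The plan is to use the explicit description of the space of double harmonics from Corollary~\ref{Main-decomposition}, which for $k \geq l$ reads
\begin{equation*}
\mcP_{k,l}(\mR^{2m},\mC)\cap \ker(\Delta_x,\Delta_u) = \bigoplus_{a=0}^{l}\bigoplus_{b=0}^{l-a} C^aS_u^b\,\mcH_{k-a+b,l-a-b}(\mR^{2m},\mC)\ ,
\end{equation*}
so that it suffices to verify that $\Pi$ restricts to the identity on the summand with $(a,b)=(0,0)$ and annihilates every summand with $(a,b)\neq(0,0)$. The first claim is immediate: since $A$ and $S_x$ kill simplicial harmonics, every term $C^iS_u^jA^iS_x^j$ of $\Pi$ with $i>0$ or $j>0$ vanishes on $\mcH_{k,l}$, leaving only the term $\beta_{0,0}(k,l)=1$.

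For the vanishing, fix $(a,b)\neq(0,0)$ with $a\leq l$ and $b\leq l-a$, together with a nonzero $H\in\mcH_{k-a+b,l-a-b}$. Applying $A^iS_x^j$ to $C^aS_u^bH$ by Proposition~\ref{AS_x-action} (with the roles of $(p,q)$ played by $(i,j)$, of $(i,j)$ by $(a,b)$, and of $(k,l)$ by $(k-a+b,l-a-b)$) produces $\alpha_{a,b}^{i,j}(k-a+b,l-a-b)\,C^{a-i}S_u^{b-j}H$ when $i\leq a$ and $j\leq b$, and $0$ otherwise. Re-applying $C^iS_u^j$ and pushing the factor $S_u^j$ past $C^{a-i}$ by means of Proposition~\ref{(C,S_u)-action} (which contributes a quotient of Pochhammer symbols in the $x$-degree) returns us to a scalar multiple of $C^aS_u^bH$. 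Hence every $C^iS_u^jA^iS_x^j$ acts on $C^aS_u^b\mcH_{k-a+b,l-a-b}$ as multiplication by an explicit scalar $\lambda_{i,j}(a,b)$, and Theorem~\ref{main-projection} is reduced to the numerical identity
\begin{equation*}
\sum_{i=0}^{a}\sum_{j=0}^{b}\beta_{i,j}(k,l)\,\lambda_{i,j}(a,b) = 0 \qquad\text{whenever }(a,b)\neq(0,0)\ .
\end{equation*}

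Inserting the closed forms for $\beta_{i,j}(k,l)$ and for $\alpha_{a,b}^{i,j}$ from Proposition~\ref{AS_x-action}, the summand becomes a quotient of Pochhammer symbols. Summing first over $i$ for fixed $j$ one recognises a terminating balanced hypergeometric series, which is evaluated by the Chu--Vandermonde / Pfaff--Saalschütz summation; the resulting closed form contains a Pochhammer factor that vanishes identically as soon as $a>0$. When $a=0$ only the $i=0$ term survives, and the remaining sum over $j$ is again a terminating ${}_2F_1$ at $1$ whose Chu--Vandermonde value carries a factor vanishing for $b>0$. This disposes of all summands with $(a,b)\neq(0,0)$, while $(a,b)=(0,0)$ is the trivial term handled in the first paragraph.

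I expect the main obstacle to be purely computational rather than conceptual: keeping track of the several shifted arguments when composing Propositions~\ref{AS_x-action} and~\ref{(C,S_u)-action}, and then rearranging the resulting quotient of Gamma functions into a shape to which the standard hypergeometric summation theorems apply. In effect, the content of the appendix is the verification that the coefficients $\beta_{i,j}(k,l)$ displayed in the statement are precisely those that render the above double sum Saalschützian, so that it telescopes to zero off the top component.
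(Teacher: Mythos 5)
Your overall strategy is exactly the one the paper follows in its appendix: use corollary \ref{Main-decomposition} to reduce the claim to showing that $\Pi$ fixes $\mcH_{k,l}$ and annihilates each summand $C^aS_u^b\mcH_{k-a+b,l-a-b}$ with $(a,b)\neq(0,0)$, observe via propositions \ref{(C,S_u)-action}, \ref{Sx_action}, \ref{A-action} and \ref{AS_x-action} that every term $C^iS_u^jA^iS_x^j$ acts on such a summand by an explicit scalar (and vanishes for $i>a$ or $j>b$), and thereby reduce the theorem to the numerical identity $\sum_{i\leq a}\sum_{j\leq b}\beta_{i,j}(k,l)\,\lambda_{i,j}(a,b)=0$. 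Up to that point everything you write is correct, and your treatment of the subcase $a=0$ is also fine: there only the $i=0$ terms survive and the remaining $j$-sum really is a terminating ${}_2F_1$ at $1$ killed by Chu--Vandermonde.

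The genuine gap is your central claim that for $a>0$ the inner sum over $i$ at fixed $j$ has a closed form (via Pfaff--Saalsch\"utz) containing a factor that vanishes; this is false, so the heart of the proof is missing. Concretely, take $(a,b)=(1,1)$, i.e.\ the summand $CS_uH$ with $H\in\mcH_{k,l-2}$, and write $(K,L)=(k+\frac{m}{2},l+\frac{m}{2})$. Using proposition \ref{A-action} one finds that $CA$ acts on $CS_uH$ by $\frac{K(L-3)(K+L-4)}{(K-2)(L-2)}$, while $\beta_{1,0}(k,l)=-\frac{K-2}{(K-1)(K+L-4)}$, so the $j=0$ inner sum equals $1-\frac{K(L-3)}{(K-1)(L-2)}=\frac{k-l+2}{(K-1)(L-2)}\neq 0$; a similar computation gives $-\frac{k-l+2}{(K-1)(L-2)}$ for the $j=1$ inner sum, and only the total over both values of $j$ vanishes. (Summing over $j$ first at fixed $i$ fails in the same way, the two partial sums being $\pm\frac{K+L-4}{(K-1)(L-2)}$.) So the cancellation genuinely entangles the two summation indices, and no single application of Chu--Vandermonde or Pfaff--Saalsch\"utz at fixed $j$ disposes of $a>0$; the series that actually arise are $3$-balanced ${}_4F_3$'s (organised in the paper as a ${}_6F_5$), not Saalsch\"utzian ones. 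This is precisely why the paper's appendix needs lemma \ref{hypergeom_lem1}, the bespoke transformation of corollary \ref{hypergeom_lem2} to pass from a $3$-balanced to a $1$-balanced series, Whipple's transformation (lemma \ref{Whipple}), and a final Gamma-function identity before the double sum collapses to zero; to complete your argument you would have to carry out an equivalent chain of transformations rather than the standard summation theorems you invoke.
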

\noindent
Before we prove this theorem, we need some additional results concerning generalised hypergeometric functions.
\begin{definition}
The generalised hypergeometric function is defined as:
\begin{equation*}
\pFq{p}{q}{a_1,\ldots,a_p}{b_1,\ldots,b_q}{z}=\sum_{j=0}^{+\infty}\frac{(a_1)^{(j)}\ldots (a_p)^{(j)}}{(b_1)^{(j)}\ldots (b_q)^{(j)}}\frac{z^j}{j!},
\end{equation*}
where $a_i, b_j\in \mR$ for $1\leq i \leq p$ and $1\leq j\leq q$.
\end{definition}
\begin{definition}
The generalised hypergeometric function $\pFq{p}{q}{a_1,\ldots,a_p}{b_1,\ldots,b_q}{z}$ is called $k$-balanced $(k\in \mN)$ if and only if
\begin{equation*}
\sum_{i=1}^p a_i = \sum_{j=1}^q b_j -k.
\end{equation*}
\end{definition}
\begin{lemma}\label{hypergeom_lem1}
The following equality holds: 
\begin{align*}
\pFq{4}{3}{a,b,c,d}{e-1,f,g}{1}=&\ \pFq{4}{3}{a,b,c,d}{e,f,g}{1} \\
&+\frac{abcd}{(e-1)efg}\, \pFq{4}{3}{a+1,b+1,c+1,d+1}{e+1,f+1,g+1}{1}
\end{align*}
\end{lemma}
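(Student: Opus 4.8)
The plan is to establish the identity termwise, comparing the coefficients of the two sides as series in $j$ (in every application occurring in this paper one of the numerator parameters is a non-positive integer, so the series terminate and the manipulation is finite; but the identity in fact holds coefficientwise as an identity of formal series, so convergence at $z=1$ is a non-issue).

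First I would write out the general term of the left-hand side from the definition of the generalised hypergeometric function, isolating the only factor that distinguishes it from $\pFq{4}{3}{a,b,c,d}{e,f,g}{1}$, namely $1/(e-1)^{(j)}$. The key elementary observation is the splitting
\begin{equation*}
\frac{1}{(e-1)^{(j)}}=\frac{e+j-1}{(e-1)\,(e)^{(j)}}=\frac{1}{(e)^{(j)}}+\frac{j}{(e-1)\,(e)^{(j)}}\ ,
\end{equation*}
which follows from $(e-1)^{(j)}=(e-1)(e)^{(j)}/(e+j-1)$ together with $e+j-1=(e-1)+j$. Substituting this into the series, the contribution coming from the first summand $1/(e)^{(j)}$ is, term by term, exactly the series $\pFq{4}{3}{a,b,c,d}{e,f,g}{1}$.

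For the second summand I would use $j/j!=1/(j-1)!$, which annihilates the $j=0$ term, and then reindex $j\mapsto j+1$. Applying the shift identity $(\alpha)^{(j+1)}=\alpha\,(\alpha+1)^{(j)}$ to each of the seven Pochhammer symbols $(a)^{(j+1)},\ldots,(d)^{(j+1)}$ in the numerator and $(e)^{(j+1)},(f)^{(j+1)},(g)^{(j+1)}$ in the denominator pulls out the scalar prefactor $abcd/\big((e-1)efg\big)$ and leaves precisely $\pFq{4}{3}{a+1,b+1,c+1,d+1}{e+1,f+1,g+1}{1}$. Adding the two contributions yields the stated identity.

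I do not expect a genuine obstacle here: the whole argument is a bookkeeping exercise with upper factorials, and the only point requiring a word of care is the legitimacy of rearranging/reindexing the series, which is immediate in the terminating case relevant to the appendix and otherwise holds at the level of formal series.
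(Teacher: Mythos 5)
Your argument is correct: the splitting $\frac{1}{(e-1)^{(j)}}=\frac{1}{(e)^{(j)}}+\frac{j}{(e-1)(e)^{(j)}}$ followed by the reindexing $j\mapsto j+1$ and the shift $(\alpha)^{(j+1)}=\alpha(\alpha+1)^{(j)}$ yields exactly the stated identity. The paper dismisses this lemma as following from straightforward computations, and your termwise bookkeeping is precisely that computation made explicit, so there is nothing further to compare.
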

\begin{proof}
Follows from straightforward computations. 
\end{proof}
\noindent
A key lemma that will be used in the proof is known as Whipple's transformation. The proof of it can be found in e.g. \cite{Slater, Whipple}.
\begin{lemma}\label{Whipple}
If one of $z$ or $n$ is a positive integer and if the hypergeometric function is $1$-balanced, the following equality holds: 
\begin{align*}
 \pFq{4}{3}{a,b,-z,-n}{u,v,w}{1}=\ &\frac{\Gamma\brac{v+z+n}\Gamma\brac{w+z+n}\Gamma\brac{v}\Gamma\brac{w}}{\Gamma\brac{v+z}\Gamma\brac{v+n}\Gamma\brac{w+z}\Gamma\brac{w+n}} \\
&\times  \pFq{4}{3}{u-a,u-b,-z,-n}{u,1-v-z-n,1-w-z-n}{1}
\end{align*}
\end{lemma}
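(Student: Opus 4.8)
The plan is to prove Whipple's transformation as an identity between two terminating $1$-balanced (Saalschützian) ${}_4F_3(1)$ series, using only the Pfaff--Saalschütz summation of a terminating $1$-balanced ${}_3F_2(1)$ as an external input. First I would observe that the hypothesis that one of $z$ or $n$ is a positive integer makes every series occurring below a finite sum, so that all interchanges of summation are automatically legitimate; and I would dispose of the base case directly: when the terminating parameter equals $0$, both ${}_4F_3$'s reduce to $1$, and the quotient of four Gamma functions reduces to $1$ as well after writing $\Gamma(v+z+n)/\Gamma(v+z)=(v+z)_n$ and likewise for the remaining factors.

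For the general case I would start from the left-hand series $\sum_{k\geq 0}\frac{(a)_k(b)_k(-z)_k(-n)_k}{(u)_k(v)_k(w)_k\,k!}$ and re-expand the Pochhammer quotient $(a)_k(b)_k/(u)_k$ as a terminating inner ${}_3F_2(1)$ by inverting the Pfaff--Saalschütz theorem; here the $1$-balanced condition $u+v+w=a+b-z-n+1$ is exactly what makes that inner series Saalschützian. Interchanging the two finite summations and re-summing the inner series --- which is again a terminating Saalschützian ${}_3F_2(1)$, hence evaluable in closed form by Pfaff--Saalschütz --- collapses the double sum to a single ${}_4F_3(1)$ in the parameters $u-a,\,u-b,\,-z,\,-n;\,u,\,1-v-z-n,\,1-w-z-n$, multiplied by the stated product of Pochhammer symbols (equivalently, the stated quotient of Gamma functions). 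An alternative and perhaps more conceptual route is to realise the left-hand ${}_4F_3$ as a degeneration of a terminating very-well-poised ${}_7F_6(1)$, apply Whipple's reduction of such a ${}_7F_6(1)$ to a Saalschützian ${}_4F_3(1)$ in its two available forms, and equate the two resulting ${}_4F_3$'s; the well-poised structure is then what produces the Gamma prefactor. One can also proceed by induction on whichever of $z,n$ is the integer, using contiguous relations of the same flavour as Lemma \ref{hypergeom_lem1}, but the telescoping there is more laborious.

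The step I expect to be the real obstacle is the bookkeeping rather than any conceptual difficulty: one must choose which Pochhammer quotient to unfold so that the inner ${}_3F_2$ comes out genuinely Saalschützian, and then match the resulting product of Pochhammer symbols against the quotient of four Gamma functions in the statement --- in particular, producing the shifted denominator parameters $1-v-z-n$ and $1-w-z-n$, which is where Pochhammer reflection identities such as $(x)_n=(-1)^n(1-x-n)_n$ enter. Everything else is routine: finiteness of the sums handles convergence, and the $1$-balanced hypothesis is invoked only to license the use of Pfaff--Saalschütz on the inner series.
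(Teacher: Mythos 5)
The paper does not actually prove this lemma: it is quoted as a classical result (Whipple's transformation of a terminating $1$-balanced ${}_4F_3(1)$) with a pointer to Whipple's 1926 paper and Slater's book, and it is only \emph{used} in the appendix. So your proposal is not an alternative to an argument in the paper but a self-contained substitute for the citation, and as a plan it is sound: the double-sum argument you describe (unfold a suitable Pochhammer quotient into a terminating Saalschützian ${}_3F_2(1)$ by reading Pfaff--Saalsch\"utz backwards, interchange the two finite sums, and re-sum the inner series by Pfaff--Saalsch\"utz again) is the classical limit of the standard proof of Sears' transformation and does yield exactly the stated identity, with the Gamma quotient appearing in Pochhammer form as $\dfrac{(v+z)^{(n)}(w+z)^{(n)}}{v^{(n)}w^{(n)}}$ when $n$ is the terminating integer; your alternative route through the two Saalschützian reductions of a terminating very-well-poised ${}_7F_6(1)$ is Whipple's original derivation and equally acceptable. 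Compared with the paper, which buys brevity by outsourcing the lemma, your route buys self-containedness at the cost of the combinatorial bookkeeping you already flag.

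One caution on that bookkeeping: the quotient you name, $(a)_k(b)_k/(u)_k$, is not by itself of Pfaff--Saalsch\"utz output form, since the theorem evaluates a terminating balanced ${}_3F_2$ to a ratio of \emph{two} numerator and \emph{two} denominator Pochhammer symbols with a specific balance among the parameters. The correct move is to unfold a balanced four-factor quotient built from the parameters $v,w$ and the terminating one (for instance a ratio of the type $\bigl((v+z)^{(n-k)}(w+z)^{(n-k)}\bigr)/\bigl(v^{(n-k)}w^{(n-k)}\bigr)$ after reversing the order of summation, or an equivalent arrangement), and it is precisely the $1$-balanced hypothesis $u+v+w=a+b-z-n+1$ that makes such an arrangement possible and the inner series Saalschützian; the reflection identity you mention is then what produces the shifted parameters $1-v-z-n$ and $1-w-z-n$. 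With that adjustment made explicit, the interchange of the two finite sums and the second application of Pfaff--Saalsch\"utz go through exactly as you describe, so the proposal is correct in outline and only needs this computation carried out.
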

\noindent
As we want to apply Whipple's transformation to a $3$-balanced hypergeometric function, we need a transformation formula to transform a $3$-balanced hypergeometric function into a $1$-balanced. To obtain such a formula, we need the following lemma:
\begin{lemma}
The following equality holds for $n\in \mN$:
\begin{align*}
& \pFq{3}{2}{-1,a+c,b-c}{a+n,b+n}{z}\times \pFq{4}{3}{-n+1,a+b+n+1,a+c,b-c}{a+1,b+1,a+b+1}{z} \\
 &- \pFq{4}{3}{-n,a+b+n,a+c,b-c}{a+1,b+1,a+b+1}{z}=z(z-1)\frac{(1-n)(a+b+n+1)(a+c)(b-c)}{(a+1)(b+1)(a+n)(b+n)} \\
 &\times \pFq{4}{3}{-n+2,a+b+n+2,a+c+1,b-c+1}{a+2,b+2,a+b+1}{z}.
 \end{align*}
\end{lemma}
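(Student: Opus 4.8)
\emph{Strategy.} Every hypergeometric series in the statement terminates: the numerator entry $-1$ truncates the ${}_3F_2$ to the linear polynomial $1-\frac{(a+c)(b-c)}{(a+n)(b+n)}z$, while $-n,-n+1,-n+2$ truncate the three ${}_4F_3$'s to polynomials in $z$ of degrees $n$, $n-1$, $n-2$ (we may assume $n\ge 2$; the low values of $n$ are immediate). Write $P(z)$ for the ${}_3F_2$ and $F(z)$, $G(z)$, $H(z)$ for the ${}_4F_3$'s with leading pair $\{-n,a+b+n\}$, $\{-n+1,a+b+n+1\}$, $\{-n+2,a+b+n+2\}$ respectively, and set $\kappa=\frac{(1-n)(a+b+n+1)(a+c)(b-c)}{(a+1)(b+1)(a+n)(b+n)}$, so that the claim reads $P(z)G(z)-F(z)=\kappa\,z(z-1)H(z)$, an identity between polynomials of degree $\le n$ in $z$. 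The plan is to first eliminate $H$ from the identity, and then to compare coefficients of $z^k$, at which point everything collapses.

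\emph{Removing $H$.} Differentiating $G$ once raises each of its parameters by one (up to a scalar); combining this with the classical contiguous relation $(\theta+\beta-1)\,{}_pF_q(\cdots;\beta,\cdots;z)=(\beta-1)\,{}_pF_q(\cdots;\beta-1,\cdots;z)$, which pulls the shifted lower parameter $a+b+2$ back down to $a+b+1$, and with the commutation rule $\theta z^{-1}=z^{-1}(\theta-1)$ for $\theta=z\,\frac{d}{dz}$, one arrives at the operator identity
\begin{equation*}
z\,H(z)=\frac{(a+1)(b+1)}{(1-n)(a+b+n+1)(a+c)(b-c)}\,\theta(\theta+a+b)\,G(z).
\end{equation*}
Since this prefactor times $\kappa$ equals $\frac{1}{(a+n)(b+n)}$, and since $P(z)G(z)-F(z)=(G(z)-F(z))-\frac{(a+c)(b-c)}{(a+n)(b+n)}\,zG(z)$, the lemma is equivalent to the $H$-free identity
\begin{equation*}
(a+n)(b+n)\big(G(z)-F(z)\big)-(a+c)(b-c)\,zG(z)=(z-1)\,\theta(\theta+a+b)\,G(z).
\end{equation*}

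\emph{Coefficient comparison.} Let $f_k$, $g_k$ be the Taylor coefficients of $F$, $G$. Because $F$ and $G$ share the numerator entries $a+c,b-c$ and all three lower parameters, the quotient $g_k/f_k$ telescopes to $\frac{(n-k)(a+b+n+k)}{n(a+b+n)}$, so that $g_k-f_k=-\frac{k(a+b+k)}{n(a+b+n)}f_k$, equivalently $G-F=-\frac{1}{n(a+b+n)}\,\theta(\theta+a+b)F$. Substituting these expressions into the coefficient of $z^k$ of the $H$-free identity and simplifying by means of the two elementary quadratic identities
\begin{equation*}
(a+n)(b+n)+(k-n)(a+b+n+k)=(a+k)(b+k),\qquad (a+c)(b-c)+(k-1)(a+b+k-1)=(a+c+k-1)(b-c+k-1),
\end{equation*}
turns the $z^k$-coefficient into precisely the three-term relation between the neighbouring coefficients $f_{k-1}$ and $f_k$ of $F$ (that is, the term ratio $f_k/f_{k-1}$), which holds by definition; the $k=0$ coefficient vanishes on both sides trivially. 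This proves the $H$-free identity, hence the lemma.

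\emph{Main obstacle.} The delicate step is the elimination of $H$: one must carry out the differentiation and the parameter-lowering on $G$ in the right order and, above all, keep track of every multiplicative constant so as to recognise that the coefficient of $\theta(\theta+a+b)G$ is exactly $\kappa^{-1}(a+n)^{-1}(b+n)^{-1}$. Once that is in place, the telescoping of $g_k/f_k$ reduces the coefficientwise verification to the two quadratic identities displayed above, with no case distinctions needed. (Alternatively one can compare coefficients directly in the original identity, expanding all of $F,G,H$, but then the parameter shifts $a+c\mapsto a+c+1$, $b-c\mapsto b-c+1$, $a+1\mapsto a+2$, $b+1\mapsto b+2$ in $H$ make the Pochhammer bookkeeping considerably heavier.)
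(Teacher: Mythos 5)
Your proof is correct, but it is organised differently from the one in the appendix, and the comparison is instructive. The paper proceeds head-on: it expands both $P\,G-F$ and $\kappa\,z(z-1)H$ as power series over the same Pochhammer building blocks (those of $G$), obtaining coefficients $C_j$ and $D_j$, and finishes by verifying the rational identity $C_j=D_j$. You instead first eliminate $H$ through the operator identity $zH=\tfrac{(a+1)(b+1)}{(1-n)(a+b+n+1)(a+c)(b-c)}\,\theta(\theta+a+b)G$ — which I checked on coefficients and which is essentially the rewriting the paper performs implicitly when it expresses its right-hand side over $G$'s Pochhammer blocks — and then, rather than verifying the resulting $H$-free identity brute force, you use the telescoping ratio $g_k/f_k=\tfrac{(n-k)(a+b+n+k)}{n(a+b+n)}$ so that the $z^k$-coefficient reduces, via your two quadratic identities, exactly to the term ratio of $F$; I verified this reduction and it closes. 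What your route buys is a lighter final verification (the paper's $C_j=D_j$ is a denser five-parameter rational identity dismissed as ``a simple computation''), and it makes visible why the identity holds. What it costs is that you divide by quantities that can vanish: $n=0,1$ (which you flag as immediate, correctly — for $n=1$ both sides vanish, and for $n=0$ one replaces the telescoping by $f_k=\delta_{k0}$), and degenerate parameter values such as $a+c=0$, $b-c=0$ or $a+b+n=0$; a one-line remark that both sides are rational in the parameters, so generic validity suffices, would make this airtight, whereas the paper's uniform expansion never divides and avoids the case distinction altogether.
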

\begin{proof}
After some tedious, but straightforward computations, the left-hand side of the equation can be written as: 
\begin{align*}
\mathrm{LHS}=&\frac{1}{(a+n)(b+n)}\sum_{j=2}^{+\infty}C_j\frac{(1-n)_{(j-1)}(a+b+n+1)_{(j-1)}(a+c)_{(j-1)}(b-c)_{(j-1)}}{(a+1)_{(j-1)}(b+1)_{(j-1)}(a+b+1)_{(j-2)}(j-2)!}z^j \\
&+\frac{(1-n)(a+b+n+1)(a+c)(b-c)}{(a+1)(b+1)(a+n)(b+n)}z,
\end{align*}
where 
\begin{equation*}
C_j=\frac{(a+c)(b-c)}{(a+b+j-1)(j-1)}-\frac{(a+n)(b+n)(a+c+j-1)(b-c+j-1)}{(a+j)(b+j)(a+b+j-1)}. 
\end{equation*}
The right-hand side can be written as: 
\begin{align*}
\mathrm{RHS}=&\frac{1}{(a+n)(b+n)}\sum_{j=2}^{+\infty}D_j\frac{(1-n)_{(j-1)}(a+b+n+1)_{(j-1)}(a+c)_{(j-1)}(b-c)_{(j-1)}}{(a+1)_{(j-1)}(b+1)_{(j-1)}(a+b+1)_{(j-2)}(j-2)!}z^j \\
&+\frac{(1-n)(a+b+n+1)(a+c)(b-c)}{(a+1)(b+1)(a+n)(b+n)}z,
\end{align*}
where 
\begin{equation*}
D_j=\frac{(j-n)(a+b+n+j)(a+c+j-1)(b-c+j-1)}{(a+j)(b+j)(a+b+j-1)(j-1)}-1. 
\end{equation*}
A simple computation shows that $C_j=D_j$ for all integers $j$, which proves the lemma. 
\end{proof}
\noindent
Putting $z=1$ in the previous lemma, one obtains: 
\begin{corollary}\label{hypergeom_lem2}
The following equality holds for $n\in \mN$:
\begin{align*}
 \pFq{4}{3}{-n,a+b+n,a+c,b-c}{a+1,b+1,a+b+1}{1}=& \pFq{3}{2}{-1,a+c,b-c}{a+n,b+n}{1}\\ 
 &\times \pFq{4}{3}{-n+1,a+b+n+1,a+c,b-c}{a+1,b+1,a+b+1}{1}
 \end{align*}
Also note that: $ \pFq{3}{2}{-1,a+c,b-c}{a+n,b+n}{1}=1-\frac{(a+c)(b-c)}{(a+n)(b+n)}$.
\end{corollary}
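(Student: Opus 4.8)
The plan is to obtain this corollary simply as the $z=1$ specialisation of the lemma immediately preceding it. The first thing I would point out is that every hypergeometric series occurring in that lemma is in fact a terminating sum: the numerator parameter $-1$ truncates the ${}_3F_2$ after the term $j=1$, and the numerator parameters $-n$, $-n+1$, $-n+2$ truncate the three ${}_4F_3$'s, so the asserted identity there is genuinely an identity between polynomials in $z$ and may be evaluated at any value of $z$ with no convergence concern whatsoever — in particular no appeal to analytic continuation is needed. Substituting $z=1$ then annihilates the entire right-hand side of that lemma, because it carries the scalar prefactor $z(z-1)$, which vanishes at $z=1$. What is left is the statement that the left-hand side vanishes at $z=1$, i.e.
\begin{equation*}
\pFq{3}{2}{-1,a+c,b-c}{a+n,b+n}{1}\,\pFq{4}{3}{-n+1,a+b+n+1,a+c,b-c}{a+1,b+1,a+b+1}{1}\;=\;\pFq{4}{3}{-n,a+b+n,a+c,b-c}{a+1,b+1,a+b+1}{1}\ ,
\end{equation*}
which is precisely the claimed equality after transposing the subtracted term.

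For the closing remark I would just expand the ${}_3F_2$ from the defining series. Since the upper factorial $(-1)^{(j)}=(-1)(0)(1)\cdots(j-2)$ contains the factor $0$ as soon as $j\ge 2$, only the indices $j=0$ and $j=1$ contribute: the $j=0$ term is $1$ and the $j=1$ term is $\dfrac{(-1)^{(1)}(a+c)^{(1)}(b-c)^{(1)}}{(a+n)^{(1)}(b+n)^{(1)}\,1!}=-\dfrac{(a+c)(b-c)}{(a+n)(b+n)}$, so the value is $1-\dfrac{(a+c)(b-c)}{(a+n)(b+n)}$. I do not expect any genuine obstacle in this proof: the only point that deserves a word of justification is the termination of all the series involved (so that setting $z=1$ in the previous lemma is legitimate), and that is immediate from the negative‑integer numerator parameters; everything else is bookkeeping.
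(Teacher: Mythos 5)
Your proposal is correct and coincides with the paper's own argument: the corollary is obtained by setting $z=1$ in the preceding lemma, where the factor $z(z-1)$ kills the right-hand side, and the closing evaluation of the terminating $\pFq{3}{2}{-1,a+c,b-c}{a+n,b+n}{1}$ is the same direct expansion. Your extra remark that all series terminate (so the specialisation at $z=1$ is unproblematic) is a sensible justification the paper leaves implicit.
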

\noindent
We will now prove the theorem.
\begin{proof}[Proof of theorem \ref{main-projection}]
It is clear that $\Pi\comm{\mcH_{k,l}}=\mcH_{k,l}$, so it remains to prove that  
\begin{equation*}
\Pi\comm{C^iS_u^j\mcH_{k-i+j,l-i-j}}=0 \iff G(k,l):= \sum_{a=0}^i\sum_{b=0}^j \alpha_{i,j}^{a,b}(k,l)\beta_{a,b}(k,l)=0,
\end{equation*}
where the coefficient $\alpha_{i,j}^{a,b}(k,l)$ are given in proposition 4.5. Using the following identity
\begin{equation*}
(x-j)^{(n)}=\frac{(x-1)_{(j)}(x)^{(n)}}{(x+n-1)_{(j)}},
\end{equation*}
the left-hand side can be written as a (double) sum that can be expressed as follows:
\begin{align*}
&G(k,l)=\sum_{b=0}^{j}  (-1)^b{j \choose b}\frac{(k-l+j+1)^{(b)}\brac{k+\frac{m}{2}-i-1}^{(b)}\brac{l+\frac{m}{2}-i-2}_{(b)}}{(k-l+2)^{(b)}\brac{k+\frac{m}{2}-1}^{(b)}\brac{l+\frac{m}{2}-2}_{(b)}} \\
&\times \pFq{6}{5}{-i, -K-L+i+3,-K-j+1,-K-b+2, -L+j+2, -L+b+3}{-K-L+4, -K+2, -K-b+1, -L+3, -L+b+2}{1},
\end{align*}
with $(K,L):=\brac{k+\frac{m}{2},l+\frac{m}{2}}$, as before. The hypergeometric function can be rewritten as a sum of hypergeometric functions of lower order. By doing so using some tedious but straightforward computations, we arrive at: 
\begin{equation}\label{TO DO}
G(k,l)=T_1+\frac{i(K+L-i-3)(K+j-1)(L-j-2)}{(K-1)(K-2)(L-2)(L-3)}\ T_2,
\end{equation}
where
\begin{align*}
T_1=&\ \pFq{4}{3}{-j,K-L+j+1,K-i-1,-L+i+2}{K-L+2,K-1,-L+2}{1}\\
&\times \pFq{4}{3}{-i, -K-L+i+3,-K-j+1,,-L+j+2}{-K-L+4, -K+2, -L+3}{1},
\end{align*}
and 
\begin{align*}
T_2=&\ \pFq{4}{3}{-j,K-L+j+1,K-i-1,-L+i+2}{K-L+2,K,-L+3}{1}\\
&\times\left(\pFq{4}{3}{-i+1, -K-L+i+4,-K-j+2,,-L+j+3}{-K-L+5, -K+3, -L+4}{1}\right. \\
&+\frac{(i-1)(K+L-i-4)(K+j-2)(L-j-3)}{(K+L-4)(K+L-5)(K-3)(L-4)}\\
&\left.\times \pFq{4}{3}{-i+2, -K-L+i+5-K-j+3,-L+j+4}{-K-L+6, -K+4, -L+5}{1}  \right).
\end{align*}
The term $T_2$ can be simplified using lemma \ref{hypergeom_lem1}. This leads to 
\begin{align*}
T_2=&\ \pFq{4}{3}{-j,K-L+j+1,K-i-1,-L+i+2}{K-L+2,K,-L+3}{1}\\
&\times\pFq{4}{3}{-i+1, -K-L+i+4,-K-j+2,,-L+j+3}{-K-L+4, -K+3, -L+4}{1}.
\end{align*}
Applying Whipple's transformation (lemma \ref{Whipple}) to the first factor of $T_1$ and to the second one of $T_2$, we get
\begin{align*}
G(k,l)=&\frac{\Gamma\brac{i+j}\Gamma\brac{-L+i+j+3}H(k,l)}{\Gamma\brac{-L+i+3}\Gamma\brac{K-L+j+2}} \pFq{4}{3}{-i,-j,-K-j+1,-K-i-1}{1-i-j,-L+2,L-i-j-2}{1},
\end{align*}
where $H(k,l)$ is given by
\begin{align*}
&H(k,l):=\frac{\Gamma\brac{K-L+2}\Gamma\brac{K-1}}{\Gamma\brac{i}\Gamma\brac{K+j-1}}\\ 
&\times \pFq{4}{3}{-i,-K-L+i+3,-K-j+1,-L+j+2}{-K-L+4,-K+2,-L+3}{1} \\
&+i(K+j-1)\frac{\Gamma\brac{K-L+i+j+1}\Gamma\brac{4-K-L}\Gamma\brac{L-K-i-j}\Gamma\brac{1-K}}{\Gamma\brac{j+1}\Gamma\brac{i+3-K-L}\Gamma\brac{L-K-j}\Gamma\brac{i+2-K}} \\
&\times \frac{\Gamma\brac{2-L}\Gamma\brac{L-1}}{\Gamma\brac{j+2-L}\Gamma\brac{L-i-1}} \ \pFq{4}{3}{-j,K-L+j+1,K-i-1,-L+i+2}{K-L+2,K,-L+3}{1}.
\end{align*}
Applying lemmas \ref{hypergeom_lem2} and \ref{Whipple} to $H(k,l)$, this reduces to:
\begin{align*}
&H(k,l):=(K-1)\frac{(-K+i+1)(-L+i+2)+(K+j-1)(-L+j+2)}{(-K+i+1)(-L+i+2)(-L+j+2)} \\
&\times \frac{\Gamma\brac{K-L+2}\Gamma\brac{K-1}\Gamma\brac{i+j}\Gamma\brac{-L+i+j+2}\Gamma\brac{-K+1}\Gamma\brac{-K-L+4}}{\Gamma\brac{i}\Gamma\brac{K+j-1}\Gamma\brac{-L+j+2}\Gamma\brac{-K-L+i+3}\Gamma\brac{j+1}\Gamma\brac{-K+i+1}}\\ 
&\times  \pFq{4}{3}{-i+1,-j+1,,K-i-1-K-j+1}{-L+3,1-i-j,L-i-j-1}{1} I(k,l)
\end{align*}
where $I(k,l)$ is given by
\begin{equation*}
I(k,l):=\frac{\Gamma\brac{K-L+i+j+1}\Gamma\brac{-K+L-i-j}\Gamma\brac{-L+2}\Gamma\brac{L-1}}{\Gamma\brac{K-L+j+1}\Gamma\brac{-K+L-j}\Gamma\brac{-L+i+2}\Gamma\brac{L-i-1}}-1
\end{equation*}
It is easy to show that $I(k,l)$ is zero using some basic gamma function properties, which completes the proof.  
\end{proof}

%%%%%%%%%%%%%%%%%%%%%%%%%%%%%%%%%%%%%%%%%%%%%%%%%%%%
%%%%%%%%%%%%%%%%%%%%%%%%%%%%%%%%%%%%%%%%%%%%%%%%%%%%
%%%%%%%%%%%%%%%%%%%%%%%%%%%%%%%%%%%%%%%%%%%%%%%%%%%%
%%%%%%%%%%%%%%%%%%%%%%%%%%%%%%%%%%%%%%%%%%%%%%%%%%%%
%%%%%%%%%%%%%%%%%%%%%%%%%%%%%%%%%%%%%%%%%%%%%%%%%%%%
%%%%%%%%%%%%%%%%%%%%%%%%%%%%%%%%%%%%%%%%%%%%%%%%%%%%

%%%%%%%%%%%%%%%%%%%%%%%%%%%%%%%%%%%%%%


\begin{thebibliography}{99}

\bibitem{AAR}
G.E. Andrews, R. Askey, R. Roy, {\em Special Functions}, Cambridge University Press, (2001).

\bibitem{AGV} S. Axler, P. Gorkin, K. Voss, {\em The Dirichlet problem on quadratic surfaces},  Math. Comp. {\bf 73}, 637-651, (2004). 

\bibitem{AR} S. Axler, W. Ramey, {\em Harmonic polynomials and Dirichlet-type problems}, Proc. Amer. Math. Soc. {\bf 123}, 3765-3773, (1995). 

\bibitem{BDS} F. Brackx, H. De Schepper, V. Sou\v{c}ek, {\em Fischer decompositions in Euclidean and Hermitean Clifford analysis}, {Arch. Math. (Brno)}, {\bf 46}, 301-321, (2010). 

\bibitem{BDSES} F. Brackx, H. De Schepper, D. Eelbode, V. Sou\v{c}ek, {\em The Howe dual pair in Hermitean Clifford analysis},  Rev. Mat. Iberoam. {\bf 26}, 449-479, (2010). 

\bibitem{CH} Y. Chikuse, {\em Invariant measures on Stiefel manifolds with applications to multivariate analysis}, Multivariate analysis and its applications,  Institute of Mathematical Statistics, Hayward, CA, 177-193, (1994).  

\bibitem{CSV} D. Constales, F. Sommen, P. Van Lancker, {\em Models for irreducible Spin$(m)$-modules}, Adv. Appl. Cliff. Alg. {\bf 11}, 271-289, (2001). 

\bibitem{KC} K. Coulembier, {\em The orthosymplectic superalgebra in harmonic analysis}, J. Lie Theory, {\bf 23}, 1,  55-83, (2013).

\bibitem{KH} K. Coulembier, H. De Bie, {\em Conformal symmetries of the super Dirac operator}, Rev. Mat. Iberoam, {\bf 31}, 2, 340 - 373, (2015).

\bibitem{KK} K. Coulembier, M. Kieburg, {\em Pizzetti formulae for Stiefel manifolds and applications}, Lett. Math. Phys., {\bf 105}, 10, 1333-1376, (2015).   

\bibitem{HDB1}
  H. De~Bie, F. Sommen, {\em Spherical harmonics and integration in superspace},  J. Phys. A: Math. Theor. {\bf 40}, 7193-7212, (2007).
  
\bibitem{HDB2}
 H. De~Bie, D. Eelbode, F. Sommen, {\em Spherical harmonics and integration in superspace II}, J. Phys. A: Math. Theor. {\bf 42} 204-245, (2009).
  
\bibitem{DBGV}
H. De~Bie, V.X. Genest, L. Vinet, {\em A Dirac-Dunkl equation on $S^2$ and the Bannai-Ito algebra}, Commun. Math. Phys. {\bf 344}, 447-464 (2016).
  
\bibitem{HDB3}
H. De~Bie, B. {\O}rsted, P. Somberg, V. Sou{\v{c}}ek, {\em Dunkl operators and a family of realizations of $\mathfrak{osp}(1|2)$.}, {Trans. Amer. Math. Soc.} {\bf 364}, 3875-3902, (2012).

\bibitem{HDB4}
H. De~Bie, P. Somberg, V. Sou{\v{c}}ek, {\em The metaplectic Howe duality and polynomial solutions for the symplectic Dirac operator.} {\em J. Geom. Phys.} {\bf 75}, 120-128, (2014).

\bibitem{DSS}
R. Delanghe, F. Sommen, V. Sou\v{c}ek, {\em Clifford analysis and spinor valued functions}, Kluwer Academic Publishers, Dordrecht, (1992).

\bibitem{DX}
C.F. Dunkl, Y. Xu, {\em Orthogonal polynomials of several variables}, vol.~81 of { \em Encyclopedia of Mathematics and its Applications}, Cambridge University Press, Cambridge, (2001).

\bibitem{Fi} E. Fischer, {\em \"Uber die Differentiationsprozesse der Algebra}, J. f\"ur Math. {\bf 148}, 1-78, (1917). 

\bibitem{GM} J. Gilbert, M.A.M Murray, {\em Clifford algebras and Dirac operators in harmonic analysis}, Cambridge University Press, Cambridge, (1991).

\bibitem{G} R. Goodman, {\em Multiplicity-free spaces and Schur-Weyl-Howe duality}, In: Eng-Chye, T., Chen-Bo, Z. (eds.), {\em Representations of real and $p$-adic groups}, World Scientific Publishing Company, Singapore, (2004-2006). 

\bibitem {GW} R. Goodman, N. Wallach, {\em Symmetry, representations and invariants}, Springer, New-York, (2009). 

\bibitem{Ho} R. Howe, {\em Transcending classical invariant theory}, J. Amer. Math. Soc. {\bf 2}, 535-552, (1989). 

\bibitem{Hrem} R. Howe, {\em Remarks on classical invariant theory}, Trans. Amer. Math. Soc. {\bf 313}, 539-570, (1989).

\bibitem{K} A.U. Klimyk, {\em Infinitesimal operators for representations of complex Lie groups and Clebsch-Gordan coefficients for compact groups}, J. Phys. A: Math. Gen {\bf 15}, 3009-3023, (1982).

\bibitem{Mi} J. Mickelsson, {\em Step algebras for semi-simple subalgebras of Lie algebras}, Rep. Math. Fys. {\bf 4}, 307-318, (1973). 

\bibitem{Molev} A.I. Molev, {\em Yangians and classical Lie algebras}, Mathematical surveys and monographs {\bf 143}, AMS Bookstore, (2007). 

\bibitem{Orsted}
B. {\O}rsted, P. Somberg, V. Sou{\v{c}}ek, {\em The Howe duality for the Dunkl version of the Dirac operator.}, {Adv. appl. Clifford alg.} {\bf 19}, 403-415, (2009).

\bibitem{Pizzetti}
 P. Pizzetti, {\em Sulla media dei valori che una funzione dei punti dello spazio assume alla superﬁcie di una sfera}, Rend. Lincei {\bf 18}, 182-185, (1909).

\bibitem{Sha} H.S Shapiro, {\em An algebraic problem of E. Fischer and the holomorphic Goursat problem}, Bull. Lond. Math. Soc. {\bf 21}, 513-537, (1989). 

\bibitem{Slater} 
L.J. Slater, {\em Generalized hypergeometric functions}, Cambridge University Press, Cambridge, (1966).

\bibitem{T} V.N. Tolstoy, {\em Fortieth Anniversary of Extremal Projector Method for Lie Symmetries}, arXiv:math-ph/0412087v2.

%\bibitem{We} H. Weyl, {\em The classical groups. Their invariants and representations}, Princeton University Press, (1939). 

\bibitem{Whipple} 
F.J.W. Whipple, {\em Well-Poised Series and Other Generalized Hypergeometric Series}, Proc. London Math. Soc. Ser. 2, {\bf 25}, 525-544, (1926).

\bibitem{Zh} D.P. Zhelobenko, {\em Transvector algebras in representation theory and dynamic symmetry}, Group-Theoretical Methods in Physics, Vol II (Yurmala VNU Sci. Press, Utrecht), 71-93, (1985). 

\bibitem{Zh2} D.P. Zhelobenko, {\em Z-algebras over reductive Lie algebras}, Soviet. Math. Dokl., {\bf 28}, 777-781, (1983). 

\bibitem{Zh3} D.P. Zhelobenko, {\em Extremal projectors and generalized Mickelsson algebras over reductive Lie algebras}, Math. USSR izv., {\bf 83}, (1989). 

\end{thebibliography}
\end{document}